\title{On sign-coherence of $c$-vectors}
\author{Hipolito Treffinger}
\newcommand{\End}{\mbox{End}}
\newcommand{\Hom}{\mbox{Hom}}
\newcommand{\Fac}{\mbox{Fac}}
\newcommand{\F}{\mathcal{F}}
\newcommand{\T}{\mathcal{T}}
\newcommand{\B}{\mathcal{B}}
\newcommand{\Ch}{\mathfrak{C}}
\renewcommand{\v}{\mathsf{v}}
\renewcommand{\mod}{\mbox{mod}}
\newcommand{\rep}[1]{%
  {%
    \tiny%
    \begin{matrix}%
      #1%
    \end{matrix}%
  }%
}
\newcommand{\por}[2]{ \left\langle #1 , #2 \right\rangle }
\newtheorem{theorem}{Theorem}[section]
\newtheorem{corollary}[theorem]{Corollary}
\newtheorem{lemma}[theorem]{Lemma}
\newtheorem{proposition}[theorem]{Proposition}
\theoremstyle{definition}
\newtheorem{definition}[theorem]{Definition}
\theoremstyle{remark}
\newtheorem{remark}[theorem]{Remark}
\newtheorem{example}[theorem]{Example}
\begin{document}

\maketitle



\abstract{Given a finite dimensional algebra $A$ over an algebraically closed field, we consider the $c$-vectors such as defined by Fu in \cite{Fu2017} and we give a new proof of its sign-coherence. Moreover, we characterise the modules whose dimension vectors are $c$-vectors as bricks respecting a functorially finiteness condition.}



\section{Introduction}

Cluster algebras were introduced at the beginning of the century by Fomin and Zelevinsky in \cite{Fomin2001}. 
In the subsequent papers \cite{Fomin2003} and \cite{Fomin2007}, they introduce two families of vectors with integer coefficients that index the cluster variables: the $c$-vectors and the $g$-vectors. 
They have been used to parametrise canonical bases of cluster algebras (see for instance \cite{Musiker2013,Plamondon2013}) and, in the last years, to build the scattering diagrams of cluster algebras by Gross, Hacking, Keel and Kontsevich in \cite{Gross2017}, a powerful tool that was used in order to solve several conjectures on cluster algebras at once and also relate cluster algebras with mirror symmetry. 

From the moment Fomin and Zelevinsky defined the $c$-vectors, they noted that all non-zero entries of a given $c$-vector are either positive or negative and name this phenomenon as \textit{sign-coherence} of $c$-vectors. 
They conjecture that $c$-vectors are always sign-coherent and this was proven true for quivers first by \cite{Derksen2010} and later by Nagao in \cite{Nagao2013}.
The general case of sign-coherence for cluster was proven in \cite{Gross2017}.

Later, cluster algebras started to be categorified using the so-called cluster categories, first introduced by Caldero, Chapoton and Schiffler in \cite{Caldero2005} for the $\mathbb{A}_{n}$ case, and, independently, by Buan, Marsh, Reineke, Reiten and Todorov in \cite{Buan2006a} for acyclic quivers. 
That lead people to give representation theoretic meaning to these families of vectors. 
First, Dehy and Keller introduced in \cite{Deny2008} the set of $g^\dagger$-vectors for 2-Calabi-Yau categories, conjecturally equivalent to $g$-vectors in the corresponding cluster algebra. 
The equivalence follows from the work of Plamondon in \cite{Plamondon2011}.
On the other hand, Nagao proved in \cite{Nagao2013} that one can realise the $c$-vectors of an skew-symmetrizable cluster algebra as a subset of the dimension vectors of functorially finite bricks in the module category of the associated jacobian algebra. The reverse inclusion was proved by N\'ajera-Chavez in \cite{Chavez2015,NajeraChavez2013} for the acyclic and finite case, respectively.

In the recent years, Adachi, Iyama and Reiten introduced in \cite{AIR} the $\tau$-tilting theory. 
This theory succeeds to emulate the combinatorics of cluster algebras on the module category of an algebra, bypassing the construction of a cluster category. 
Therefore, problems that arise naturally in the cluster setting can be stated on this representation theoretic environment, even if the algebra considered does not have an associated cluster algebra.

For instance, of $g$-vectors in this context was given by Adachi, Iyama and Reiten in \cite{AIR}, generalising the approach of \cite{Deny2008}. 
Afterwards, Fu introduced in \cite{Fu2017} the $c$-vectors for finite dimensional algebras as the columns of the inverse of the transpose of matrices of $g$-vectors (see section \ref{sc:Setting} for a rigorous definition). 
Then he showed that $c$-vectors are sign-coherent for every finite dimensional algebra $A$, by showing that $c$-vectors are dimension vectors of certain $A$-modules.
Moreover, he gives an explicit description of the these modules when $A$ is either quasitilted, representation directed or cluster-tilted of finite type. 

In the present paper we take the definition of $c$-vectors given in \cite{Fu2017} and we show that they are the dimension vectors of certain bricks, using the description of stability functions given in \cite{BSTwc}. 
Our first theorem is the following.

\begin{theorem}[Theorem \ref{th:c-matrix}]
Let $A$ be an algebra, $(M,P)$ be a $\tau$-tilting pair in $\emph{mod} A$ and $C_{(M,P)}$ be its $C$-matrix. 
Then there exist a set of bricks $\{B_1, \dots, B_n\}$ such that its dimension vectors are, up to sign, to the columns $\{c_1, \dots, c_n\}$ of $C_{(M,P)}$.
In other words, either $[B_r] = c_r$ or $[B_r] = -c_r$ for all $1 \leq r \leq n$.

In particular the $c$-vectors of $A$ are sign-coherent.
\end{theorem}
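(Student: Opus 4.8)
The plan is to read the columns of $C_{(M,P)}$ as normal vectors to the facets of the chamber attached to $(M,P)$ in the wall-and-chamber structure of $A$, and to take for $B_r$ the brick labelling the $r$-th facet. First I would recall from \cite{AIR} that $(M,P)$ has a $g$-matrix $G_{(M,P)}\in GL_n(\mathbb{Z})$ whose columns $g_1,\dots,g_n$, the $g$-vectors of the indecomposable summands of $M$ and $P$, are linearly independent; hence $\mathcal{C}_{(M,P)} := \mathbb{R}_{>0}g_1+\dots+\mathbb{R}_{>0}g_n$ is an $n$-dimensional simplicial cone. By the identification of the $g$-vector fan with the wall-and-chamber structure in \cite{BSTwc}, $\mathcal{C}_{(M,P)}$ is a chamber of that structure; its $n$ facets are $F_r := \mathbb{R}_{\geq 0}\langle g_i : i\neq r\rangle$, each $F_r$ lies in a unique wall $\mathcal{D}(B_r)$ of a brick $B_r\in\mod A$, and $\mathcal{D}(B_r)$ is contained in the hyperplane $[B_r]^{\perp}=\{\theta : \theta([B_r])=0\}$.

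Next I would use Fu's definition \cite{Fu2017}, $C_{(M,P)} = (G_{(M,P)}^{\mathsf T})^{-1}$, which gives $\langle g_i, c_r\rangle = \delta_{ir}$ for the columns $c_1,\dots,c_n$ of $C_{(M,P)}$. For fixed $r$, the equations $\langle g_i, c_r\rangle = 0$ with $i\neq r$ say that $c_r$ is orthogonal to $\mathrm{span}(g_i : i\neq r)$, i.e. $c_r$ is a normal vector to the facet $F_r$, while $\langle g_r, c_r\rangle = 1$ shows $c_r$ is not tangent to $F_r$, so that the codimension-one face $F_r$ spans precisely the hyperplane $c_r^{\perp}$.

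Then I would compare the two normals: since $F_r$ is contained in $\mathcal{D}(B_r)\subseteq [B_r]^{\perp}$ and also spans $c_r^{\perp}$, the hyperplanes $c_r^{\perp}$ and $[B_r]^{\perp}$ coincide, so $[B_r] = \lambda_r c_r$ for some nonzero real $\lambda_r$. As $c_r$ is a column of $C_{(M,P)}\in GL_n(\mathbb{Z})$ it is a primitive integral vector, and the brick $B_r$ has primitive dimension vector (it is the brick produced by the $\tau$-tilting mutation of $(M,P)$ at $r$), so comparing primitive integral generators of the same line forces $\lambda_r = \pm 1$; hence $[B_r] = c_r$ or $[B_r] = -c_r$. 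Sign-coherence is then immediate: every entry of the dimension vector $[B_r]$ of a module is $\geq 0$, so all nonzero entries of $c_r = \pm[B_r]$ share a sign, and this applies to every column of every $C$-matrix of $A$.

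The hard part is the comparison step: one needs that the facet $F_r$ meets only a single wall of the wall-and-chamber structure, so that $B_r$ is unambiguous, and that this $B_r$ has a primitive dimension vector, so that one obtains $[B_r]=\pm c_r$ and not merely proportionality. For the first point I would lean on the simpliciality of $\mathcal{C}_{(M,P)}$ together with the description of walls in \cite{BSTwc}; for the second, on the fact that the brick labelling a facet of a $\tau$-tilting chamber arises from one $\tau$-tilting mutation and therefore has a $c$-vector as its dimension vector. Everything else is the elementary linear algebra of the second paragraph, which follows at once from the definition of the $C$-matrix.
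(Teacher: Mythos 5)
Your reduction to proportionality is sound and in fact mirrors the first half of the paper's own argument: the relations $\langle g_i, c_r\rangle = \delta_{ir}$, together with the fact that the unique brick $B_r$ on the facet $F_r$ satisfies $\langle g_i,[B_r]\rangle = 0$ for all $i\neq r$ (because $B_r$ is $\theta_r$-stable and $\theta_r$ is a positive combination of the $g_i$, $i\neq r$, spanning $F_r$), is exactly the paper's computation that $G_{(M,P)}^{T}X_{(M,P)}$ is a diagonal matrix, i.e.\ that $[B_r]=\lambda_r c_r$ for some nonzero $\lambda_r$. The gap is in your final step, where you pin down $\lambda_r=\pm 1$ by comparing ``primitive integral generators of the same line.'' Primitivity of $c_r$ is fine (a column of a matrix in $GL_n(\mathbb{Z})$ is part of a $\mathbb{Z}$-basis), but your justification that $[B_r]$ is primitive --- namely that $B_r$ ``has a $c$-vector as its dimension vector'' because it arises from a $\tau$-tilting mutation --- is precisely the statement being proved, so the argument is circular. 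Nor can you fall back on $B_r$ merely being a brick: bricks can have non-primitive dimension vectors (over the $3$-Kronecker quiver the representation with maps $I$, $N$, $N^{T}$ for $N$ the $2\times 2$ nilpotent Jordan block is a brick of dimension vector $(2,2)$), so some input specific to these particular bricks is required.

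That missing input is the actual content of the paper's proof. Lemma \ref{lem:stables} shows that when $\langle\theta_{(M,P)},[B_r]\rangle>0$ one has $\langle\theta_{(M,P)},[B_r]\rangle=\dim_k\Hom_A(M_r,B_r)=1$; this uses Proposition \ref{prop:SinFac} to place $B_r$ in $\Fac M$ and so kill the $\Hom_A(B_r,\tau M_r)$ term in the Auslander--Reiten formula of Theorem \ref{formula}, and then a pullback and Ext-projectivity argument combined with the equivalence $\mbox{mod}\, A^{ss}_{\theta_r}\simeq \mbox{mod}\,\tilde{A}_{(M,P)_r}$ of Theorem \ref{stablemodcat} to identify $\Hom_A(M_r,B_r)$ with $\Hom(\tilde{A}_{(M,P)_r},S)$ for the unique simple $S$, which is one-dimensional. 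The case $\lambda_r<0$ is then handled by passing to the other $\tau$-tilting pair completing $(M,P)_r$, where the pairing becomes $+1$, and invoking the dual-basis Lemma \ref{lem:minus}. If you wish to keep your geometric framing you must still supply an argument of this kind; the linear algebra alone yields only proportionality, not the normalisation $\lambda_r=\pm 1$.
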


This result has two interesting consequences. The first one is that the arrows of the exchange graph of $\tau$-tilting pairs of every algebra can be labeled with $c$-vectors as follows. 

\begin{corollary}[Corollary \ref{cor:labeling}]
Let $A$ be an algebra. Then every arrow in the exchange graph of $\tau$-tilting pairs can be labeled with a positive $c$-vector.
\end{corollary}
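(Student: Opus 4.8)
The plan is to read off the label of each arrow directly from the $C$-matrix of one of its endpoints, using Theorem~\ref{th:c-matrix} to guarantee that the relevant column is, up to sign, the dimension vector of a brick, and therefore has a canonical positive representative; the remaining work is to check that this does not depend on which endpoint of the arrow one uses.

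Recall from \cite{AIR} that the vertices of the exchange graph of $\tau$-tilting pairs of $\mod A$ are the $\tau$-tilting pairs themselves, and that every arrow is a mutation $(M,P)\to\mu_k(M,P)$ of a $\tau$-tilting pair $(M,P)$ at one of the indices $k\in\{1,\dots,n\}$. Fix such an arrow and let $c_k$ be the $k$-th column of $C_{(M,P)}$. Since $C_{(M,P)}$ is invertible we have $c_k\neq 0$, and by Theorem~\ref{th:c-matrix} there is a brick $B_k$ with $[B_k]=c_k$ or $[B_k]=-c_k$; being a dimension vector, $[B_k]$ has non-negative entries, so $[B_k]$ is the positive representative of $\{c_k,-c_k\}$, and (by the $C$-matrix mutation rule recalled below, $-c_k$ occurs as a column of $C_{\mu_k(M,P)}$) it is itself a $c$-vector. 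I therefore label the arrow $(M,P)\to\mu_k(M,P)$ with the positive $c$-vector $[B_k]$.

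It remains to see that this prescription is consistent. The arrow also has $\mu_k(M,P)$ as an endpoint, and mutating $\mu_k(M,P)$ again at the index $k$ returns $(M,P)$; using $C_{(M,P)}=(G_{(M,P)}^{\mathsf T})^{-1}$ together with the $g$-vector mutation of \cite{AIR}, which modifies only the $k$-th column of the $g$-matrix, one obtains that the $k$-th column of $C_{\mu_k(M,P)}$ equals $-c_k$. Hence $\{c_k,-c_k\}$ is unchanged, and so is its positive representative: the label $[B_k]$ is intrinsic to the arrow. Equivalently, in the language of \cite{BSTwc} the chambers of $(M,P)$ and $\mu_k(M,P)$ are separated by the wall $\{\v:\por{\v}{[B_k]}=0\}$, and both $\tau$-tilting pairs detect the same brick $B_k$.

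The only step calling for genuine care is precisely this bookkeeping of the $C$-matrix mutation — verifying that the column negates under mutation at the mutated index, rather than transforming in a more complicated fashion; once it is in place the corollary is immediate, since each arrow of the exchange graph receives the well-defined positive $c$-vector $[B_k]$ and nothing further is needed from Theorem~\ref{th:c-matrix}.
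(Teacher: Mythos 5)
Your proof is correct, but it takes a genuinely different route from the paper's. You work entirely at the level of the matrices: take the $k$-th column $c_k$ of $C_{(M,P)}$, invoke Theorem \ref{th:c-matrix} to write $c_k=\pm[B_k]$ for a brick $B_k$, label the arrow with the positive representative $[B_k]$, and check well-definedness by noting that mutation at $k$ changes only the $k$-th column of the $g$-matrix and therefore negates the $k$-th column of the $C$-matrix. The negation step that you rightly single out as the only delicate point is exactly Lemma \ref{lem:minus} of the paper, applied to the two bases of $g$-vectors differing in a single element, so you can discharge it by citation rather than re-deriving it; note also that the brick $B_k$ is intrinsic to the arrow because the almost $\tau$-tilting pair $(M,P)_k$, and hence the stability vector $\theta_k$, is shared by both endpoints of the arrow. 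The paper argues geometrically instead: it orders the two pairs so that $\Fac M'\subset\Fac M$, uses \cite[Proposition 3.17]{BSTwc} and Corollary \ref{cor:walls} to identify the wall $\mathfrak{D}(N)$ separating the two chambers with a brick $N$ unique up to isomorphism, and then uses Proposition \ref{prop:SinFac} together with Theorem \ref{th:c-matrix} to conclude that $[N]$ is a positive column of $C_{(M,P)}$. The two arguments buy slightly different things: yours is more elementary and avoids the wall-and-chamber machinery entirely, while the paper's identifies for free which endpoint carries the positive column (the one with the larger torsion class), which is the refinement needed to orient the arrows and match the brick labelling of \cite{Demonet2017, BCZ}.
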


Note that the previous labelling coincides with the brick labelling studied for instance in \cite{Demonet2017, BCZ}.

As we said already, $c$-vectors and $g$-vectors are closely related with the scattering diagrams of cluster algebras. 
In \cite{Bridgeland2016a}, Bridgeland proposed a scattering diagram for any finite dimensional algebra and showed that this scattering diagram coincide with the corresponding cluster scattering diagram if the algebra hereditary. 
In \cite{BSTwc} the $g$-vectors were used to give an algebraic description of the wall and chamber structure of an algebra, which happens to be the support of the scattering diagram introduced by Bridgeland. 
As a consequence of our main theorem we prove that the walls surrounding chambers generated by $g$-vectors (this set includes all reachable chambers) are perpendicular to $c$-vectors. For more details on the wall and chamber structure of an algebra see section \ref{sec:wallandchamber}. 
The precise statement is the following.

\begin{corollary}[Corollary \ref{cor:walls}]
Let $(M,P)$ be a $\tau$-tilting pair and $\mathfrak{C}_{(M,P)}$ be the chamber induced by it. Then the walls surrounding $\mathfrak{C}_{(M,P)}$ are defined by the $c$-vectors corresponding to $(M,P)$, that is, the columns of the matrix $C_{(M,P)}$.
\end{corollary}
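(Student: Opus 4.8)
The plan is to read off the walls of $\mathfrak{C}_{(M,P)}$ from the facial structure of the $g$-vector cone and then to identify each facet with the stability space of the corresponding brick furnished by Theorem~\ref{th:c-matrix}. First I would recall the description of the chamber (see Section~\ref{sec:wallandchamber}): if $(M,P)$ has indecomposable summands with $g$-vectors $g_1,\dots,g_n$, then $\mathfrak{C}_{(M,P)}$ is the (open) cone $\sum_{j}\mathbb{R}_{\ge 0}g_j\subseteq K_0(\proj A)_{\mathbb{R}}$, and the walls surrounding it are by definition the walls $\mathfrak{D}(X)$ of the wall-and-chamber structure that contain one of the $n$ facets $F_r=\sum_{j\ne r}\mathbb{R}_{\ge 0}g_j$ of $\overline{\mathfrak{C}_{(M,P)}}$. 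So it suffices to prove that $F_r\subseteq\mathfrak{D}(B_r)$ for the bricks $B_1,\dots,B_n$ of Theorem~\ref{th:c-matrix}, and that conversely no other wall meets $\overline{\mathfrak{C}_{(M,P)}}$ in codimension one.

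The key point is the duality between $g$- and $c$-vectors built into Fu's definition. Since $C_{(M,P)}$ is the inverse transpose of the matrix $G_{(M,P)}$ whose columns are the $g_j$, and since the bilinear pairing $\por{[P]}{[X]}=\dim\Hom_A(P,X)$ between $K_0(\proj A)$ and $K_0(\mod A)$ is exactly the one used to evaluate stability functions, we obtain $\por{g_j}{c_i}=\delta_{ij}$. By Theorem~\ref{th:c-matrix} we may write $c_r=\varepsilon_r[B_r]$ with $\varepsilon_r\in\{\pm1\}$. Hence for any $\theta=\sum_{j\ne r}a_jg_j\in F_r$ we get $\theta(B_r)=\varepsilon_r\sum_{j\ne r}a_j\por{g_j}{c_r}=0$, so $F_r\subseteq [B_r]^{\perp}=c_r^{\perp}$; and since $\por{g_r}{c_r}=1\ne 0$, the hyperplane $c_r^{\perp}$ genuinely supports the facet $F_r$ (it separates $g_r$ from it), so the $n$ facets of $\overline{\mathfrak{C}_{(M,P)}}$ are nondegenerate and pairwise distinct, matching the $n$ columns $c_1,\dots,c_n$ of $C_{(M,P)}$ bijectively.

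It remains to upgrade ``$F_r\subseteq c_r^{\perp}$'' to ``$F_r\subseteq\mathfrak{D}(B_r)$'', i.e.\ to verify that $B_r$ is actually $\theta$-semistable for $\theta$ in the relative interior of $F_r$, and this is the step I expect to be the main obstacle. I would deduce it from the brick-labelling of $\tau$-tilting mutation: the exchange arrow out of $(M,P)$ in direction $r$ is labelled by $B_r$ (this is how $B_r$ is singled out in the proof of Theorem~\ref{th:c-matrix}, and, as noted after Corollary~\ref{cor:labeling}, it coincides with the usual brick labelling of \cite{Demonet2017, BCZ}), and for that labelling it is known that the common facet $\overline{\mathfrak{C}_{(M,P)}}\cap\overline{\mathfrak{C}_{\mu_r(M,P)}}$ of the two adjacent chambers lies in $\mathfrak{D}(B_r)$ and meets its relative interior; since this common facet is precisely $F_r$, we conclude $F_r\subseteq\mathfrak{D}(B_r)$. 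Because a chamber has exactly $n$ facets, the walls surrounding $\mathfrak{C}_{(M,P)}$ are then exactly $\mathfrak{D}(B_1),\dots,\mathfrak{D}(B_n)$, and by Theorem~\ref{th:c-matrix} these are the hyperplane walls orthogonal to $\pm c_1,\dots,\pm c_n$, that is, to the columns of $C_{(M,P)}$.

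If one prefers a self-contained argument avoiding the external wall-and-chamber compatibility, the semistability of $B_r$ along $F_r$ can instead be checked directly: $\theta(B_r)=0$ is already shown, and $\theta(L)\le 0$ for proper submodules $L\subsetneq B_r$ would follow from the fact that $B_r$ is the unique brick whose torsion class lies strictly between those associated with $(M,P)$ and with $\mu_r(M,P)$, together with the description of $F_r$ as the set of stability functions that simultaneously witness both chambers. Either way, the linear-algebraic half of the statement is immediate from $\por{g_j}{c_i}=\delta_{ij}$, and all the genuine content is concentrated in identifying the facet of the $g$-vector cone with the brick wall.
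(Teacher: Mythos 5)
Your argument is correct and follows essentially the same route as the paper: the paper simply cites \cite[Corollary 3.18]{BSTwc} for the fact that the walls surrounding $\mathfrak{C}_{(M,P)}$ are the stability spaces of the $\theta_r$-(semi)stable modules $B_r$, and then applies Theorem \ref{th:c-matrix} to identify $[B_r]$ with $\pm c_r$. Your explicit computation $\por{g_j}{c_i}=\delta_{ij}$ and the reduction of the remaining content to the semistability of $B_r$ on the facet $F_r$ (which you delegate to the same wall-and-chamber/brick-labelling results of \cite{BSTwc}) is just a more spelled-out version of that two-step argument.
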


Now, it is known that the dimension vector of every brick in the module category of an arbitrary algebra is a $c$-vector. 
Our following result gives a classification of the bricks whose dimension vectors are positive $c$-vectors in terms of functorially torsion classes.

\begin{theorem}[Theorem \ref{th:FacM}]
Let $(M,P)$ be a $\tau$-tilting pair and $\B_{(M,P)}^+ :=\{B_1, \dots, B_k\}$ be the set of bricks whose dimension correspond to the positive columns of the $C$-matrix $C_{(M,P)}$ of $(M,P)$.
Then the following holds:
\begin{itemize}
\item $\emph{Hom}_A(B_i, B_j)=0$ whenever $i \neq j$;
\item $\emph{Fac} M$ is the minimal torsion class  containing $\B_{(M,P)}^+$.
\end{itemize}

Moreover, the reciprocal also holds.
Namely, if is $\mathcal{N}$ is a set of bricks in $\emph{mod} A$ such that $\Hom_A(B,B')=0$ for all $B, B' \in \mathcal{N}$ and the minimal torsion class containing $\mathcal{N}$ is functorially finite. 
Then $\mathcal{N}=\B_{(M,P)}^+$ for some $\tau$-tilting pair $(M,P)$.
\end{theorem}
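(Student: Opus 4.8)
The plan is to prove Theorem \ref{th:FacM} in two passes: first the ``forward'' direction (given $(M,P)$, establish the two bulleted properties of $\B_{(M,P)}^+$), then the ``reciprocal'' (given an abstract family $\mathcal{N}$ of pairwise-$\Hom$-orthogonal bricks generating a functorially finite torsion class, recover a $\tau$-tilting pair). Throughout I would lean on Theorem \ref{th:c-matrix}, which already supplies bricks $B_1,\dots,B_n$ realising the columns of $C_{(M,P)}$ up to sign, together with the standard $\tau$-tilting dictionary of \cite{AIR}: a support $\tau$-tilting pair $(M,P)$ corresponds to a functorially finite torsion class $\T = \Fac M$, and mutation of $(M,P)$ corresponds to passing to a neighbouring torsion class across a wall.

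For the forward direction, the $\Hom$-orthogonality $\Hom_A(B_i,B_j)=0$ for $i\neq j$ among the positive bricks should follow from the wall-and-chamber picture: by Corollary \ref{cor:walls} the walls surrounding the chamber $\Ch_{(M,P)}$ are cut out by the $c$-vectors, and each such wall is the stability space of the corresponding brick (via \cite{BSTwc}); a nonzero map between two of these bricks would force an inclusion or a common stable subobject incompatible with their living on distinct walls of a single chamber. Concretely I would argue that the positive $c$-vector bricks are exactly the \emph{minimal extending modules} / the layers appearing in the brick labelling of the arrows out of $(M,P)$, and distinct arrows at a vertex give $\Hom$-orthogonal bricks — this is precisely the compatibility established in \cite{Demonet2017, BCZ}, which the excerpt already flags as matching our labelling. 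For the second bullet, that $\Fac M$ is the \emph{minimal} torsion class containing $\B_{(M,P)}^+$: each $B_i$ with positive $c$-vector satisfies $B_i \in \Fac M = \T$, because a positive $c$-vector of $(M,P)$ corresponds to a brick that is a quotient appearing in the torsion class rather than in its torsion-free class $\F$ — sign-coherence (Theorem \ref{th:c-matrix}) is exactly what lets us split the $B_i$ into those in $\T$ and those in $\F$. So the minimal torsion class $\mathrm{T}(\B_{(M,P)}^+)$ is contained in $\T$. For the reverse containment I would show $\T$ has no proper functorially finite torsion subclass still containing all the $B_i$: if $\T' \subsetneq \T$ were such, then crossing from $\T$ down to $\T'$ one passes a wall, hence removes (via the brick labelling) some brick which is necessarily one of the positive $c$-vector bricks of $(M,P)$ — contradiction. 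Hence $\mathrm{T}(\B_{(M,P)}^+) = \Fac M$.

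For the reciprocal, suppose $\mathcal{N}$ is a set of bricks with $\Hom_A(B,B')=0$ for all $B,B'\in\mathcal{N}$ (note this includes $B=B'$ only vacuously since bricks have $\End = k$; the hypothesis really bites for $B\neq B'$) and $\T := \mathrm{T}(\mathcal{N})$ functorially finite. By \cite{AIR}, $\T = \Fac M$ for a uniquely determined support $\tau$-tilting pair $(M,P)$. It remains to identify $\mathcal{N}$ with $\B_{(M,P)}^+$. The inclusion $\B_{(M,P)}^+ \subseteq \mathcal{N}$, or rather its negation, is the delicate point: I would argue that each brick in $\B_{(M,P)}^+$ must lie in \emph{every} generating set of $\T$ that is $\Hom$-orthogonal, because the positive $c$-vector bricks are the ``$\mathrm{Ext}$-minimal'' bricks of $\T$ — the bricks $B$ in $\T$ with $\Fac B$ again functorially finite and meeting $\T$ in a facet — and a $\Hom$-orthogonal generating family cannot omit such a brick without failing to generate $\T$. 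Conversely, any brick $B'\in\mathcal{N}$ lies in $\T = \Fac M$ and is $\Hom$-orthogonal to all the positive $c$-vector bricks; I would show such a $B'$ is forced to \emph{be} one of them, using that inside the lattice of torsion classes between $0$ and $\T$ the brick $B'$ labels a wall of $\T$ (again by the brick-labelling theory of \cite{Demonet2017, BCZ}), and the walls of the chamber $\Ch_{(M,P)}$ are exactly those labelled by $\B_{(M,P)}^+$ by Corollary \ref{cor:walls}. Combining the two inclusions gives $\mathcal{N} = \B_{(M,P)}^+$.

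The main obstacle I anticipate is the reciprocal's uniqueness step: one must rule out a ``too small'' or ``too large'' $\Hom$-orthogonal generating family. The subtlety is that a priori $\mathcal{N}$ need not consist of the wall-labelling bricks — it is only assumed $\Hom$-orthogonal and generating — so I need a characterisation of $\B_{(M,P)}^+$ that is intrinsic to the torsion class $\T$ and forces any such $\mathcal{N}$ to coincide with it. I expect the cleanest route is: (i) show $\mathrm{T}(\mathcal{N})$ functorially finite forces $\mathcal{N}$ to be \emph{finite} (bounded by $n$) via the $g$-vector / $c$-matrix count from Theorem \ref{th:c-matrix}; (ii) show each element of $\mathcal{N}$ is a brick labelling a distinct wall of $\T$, using $\Hom$-orthogonality to separate the walls; (iii) conclude by the bijection between walls of $\Ch_{(M,P)}$ and $\B_{(M,P)}^+$ of Corollary \ref{cor:walls} that the two finite sets have the same cardinality and one contains the other, hence are equal. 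Steps (i) and (iii) are bookkeeping; step (ii) — promoting an abstract $\Hom$-orthogonal family to a wall-labelling family — is where the real work lies, and I would isolate it as a lemma about functorially finite torsion classes proven via the brick-labelling machinery of \cite{Demonet2017, BCZ}.
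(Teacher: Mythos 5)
Your architecture is reasonable, but it diverges from the paper's proof and contains one genuine logical gap in the forward direction. You propose to prove minimality by showing that $\Fac M$ has no proper \emph{functorially finite} torsion subclass containing all of $\B_{(M,P)}^+$, and to conclude $T(\B_{(M,P)}^+)=\Fac M$. That conclusion does not follow: $T(\B_{(M,P)}^+)$ is the minimal torsion class containing these bricks in the \emph{full} lattice of torsion classes, and a priori it could be a proper, non-functorially-finite subclass of $\Fac M$, which your argument never excludes. Your wall-crossing step also tacitly assumes that every proper torsion subclass of $\Fac M$ sits below one of the finitely many lower covers of $\Fac M$ and that these covers are exactly the mutations of $(M,P)$; both facts are true but require the lattice-theoretic results of \cite{DIJ}, which you do not invoke. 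The paper sidesteps all of this: by Theorem \ref{stablemodcat} each generator $N_s$ of the category of $\theta_s$-semistable modules is filtered by the brick $B_s$, so $T(\B_{(M,P)}^+)=T(\{N_s\})$, and since $N_s$ is the cokernel of the right $\mathrm{add}(\bigoplus_{i\neq s}M_i)$-approximation of $M_s$, this equals $\Fac M$ by \cite[Lemma 3.7]{DIJ}. Likewise your Hom-orthogonality argument ("incompatible with living on distinct walls of one chamber") is a heuristic, not a proof; the paper gets it in two lines by composing any $f\colon B_t\to B_s$ with the epimorphism $M_t\to B_t$ (which exists because $B_t\in\Fac M$ by Proposition \ref{prop:SinFac}) and using that $B_s\in\left(\bigoplus_{i\neq s}M_i\right)^{\perp}$.

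For the reciprocal, your step (ii) --- promoting an arbitrary Hom-orthogonal generating family to the wall-labelling family --- is precisely the content of \cite[Theorem 1.0.8]{BCZ} together with the uniqueness of Remark \ref{rmk:unicity}, which the paper simply cites: both $\mathcal{N}$ and $\B_{(M,P)}^+$ are Hom-orthogonal sets of bricks generating the same functorially finite torsion class, hence both coincide with the unique maximal such set. You correctly identify this as the point where the real work lies, but you leave it as an unproven lemma when it is available off the shelf; until it is supplied, your proof of the reciprocal is incomplete as well.
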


The structure of the article is the following. 
In section \ref{sc:Setting} we give the background material and we establish the setting for the rest of the paper. 
In section \ref{sc:Main} we prove the main results of the paper.
In section \ref{sec:wallandchamber} we investigate how  the results in the previous section improve the description of the wall and chamber structure an algebra given in \cite{BSTwc}.
In section \ref{sc:Example}, we finish the article with the complete analysis of a particular example.

It is important to remark that, independently, Speyer and Thomas (private communication), and J\o rgensen and Yakimov in \cite{JY} proved similar results.
While preparing this second version the author was informed that Asai also proved similar results in \cite{Asai2016}.

\paragraph{Acknowledgements}
The author thankfully acknowledges Peter J\o rgensen for the exchanges that lead to the work in this paper. 
He also is grateful to Kiyoshi Igusa, Hongwei Niu and Sibylle Schroll for the useful discussions. 
He does not want to forget the valuable comments of Bernhard Keller and the anonymous referee on a previous version of the paper that lead to great improvement of this article.
This project was supported by the EPSRC founded project EP/P016294/1.



\section{Setting}\label{sc:Setting}

In this paper $A$ is a finite dimensional algebra over an algebraically closed field $k$. 
By $\mod A$ we mean the category of finite generated right $A$-modules and $\tau$ represents the Auslander-Reiten translation in $\mod A$. 
If one considers $A$ as a module over itself, then $A$ can be written as $A=\bigoplus_{i=1}^{n} P(i)$, where $P(i)$ is the $i$-th indecomposable projective $A$-module. 

The Grothendieck group of $A$ is noted $K_{0}(A)$, where rk$(K_{0}(A))=n$. 
It is known that $K_{0}(A)$ is isomorphic to $\mathbb{Z}^{n}$. 
In this paper we consider the embedding $\Delta: K_{0}(A)\to \mathbb{R}^{n}$ of $K_{0}(A)$ to $\mathbb{R}^{n}$ given by $\Delta(M)=[M]$ where $[M]$ is the dimension vector of $M$, for every $A$-module $M$. If the context is unambiguous we simply say that $[M]$ is a vector of $\mathbb{R}^{n}$.

Given an $A$-module $M$, we denote by $|M|$ the number of non-isomorphic indecomposable direct summands of $M$.

When we write $\langle -,-\rangle$ we are referring to the canonical inner product in $\mathbb{R}^n$ which is defined by
$$\langle v,w\rangle=\sum_{i=1}^n v_iw_i$$
for every $v$ and $w$ in $\mathbb{R}^n$.

We say that an $A$-module $M$ is a \textit{brick} if its endomorphism algebra $\End_{A}(M)$ is a division ring. 

\subsection{$\tau$-tilting theory}

The $\tau$-tilting theory was introduced by Adachi, Iyama and Reiten in \cite{AIR}. This theory provides a framework to study problems arising in cluster algebras in the module category of an arbitrary algebra. 
In the proofs of this paper the $\tau$-rigid ($\tau$-tilting) pairs play a central role. 
They are defined as follows. 

	\begin{definition}\cite[Definition 0.1 and 0.3]{AIR}\label{list}
		Let $A$ an algebra, $M$ an $A$-module and $P$ a projective $A$-module. The pair $(M,P)$ is said \textit{$\tau$-rigid} if:

		\begin{itemize}
			\item $\Hom_{A}(M,\tau M)=0$;
			\item $\Hom_{A}(P,M)=0$.
		\end{itemize}
		Moreover, we say that $(M,P)$ is \textit{$\tau$-tilting} (or \textit{almost $\tau$-tilting}) if $|M|+|P|=n$ (or $|M|+|P|=n-1$, respectively).
	\end{definition}

From now on, when we say that $(M,P)$ is a $\tau$-tilting ($\tau$-rigid) pair, we are assuming that $M$ and $P$ are basic and its decomposition as direct sum of indecomposable modules can be written as $M=\bigoplus_{i=1}^kM_i$ and $P=\bigoplus_{j=k+1}^nP_j$ ($M=\bigoplus_{i=1}^kM_i$ and $P=\bigoplus_{j=k+1}^t P_j$, with $t\leq n$, respectively).

The $\tau$-tilting is a generalisation of classical tilting theory which is capable of describe all the functorially finite torsion classes in the module category of an algebra in terms of $\tau$-tilting pairs. 

\begin{theorem}\cite[Theorem 2.7]{AIR}\cite[Theorem 5.10]{Auslander1981}
There is a well defined function $\Phi: \mathrm{s\tau\text{-}rig} A \to \mathrm{f\text{-}tors}$ from $\tau$-rigid pairs to functorially finite torsion classes given by 
$$\Phi(M,P)=\emph{Fac} M:=\{X \in\emph{mod} A : M^{n} \to X \to 0 \text{ for some $n\in\mathbb{N}$}\}.$$
Moreover, $\Phi$ is a bijection if we restrict it to $\tau$-tilting pairs.
\end{theorem}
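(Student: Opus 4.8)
The plan is to establish the two assertions in turn: that $\Phi(M,P)=\Fac M$ always yields a functorially finite torsion class, and that the restriction to $\tau$-tilting pairs is a bijection. For well-definedness, note first that $\Fac M$ is closed under quotients and finite direct sums directly from its definition, so the only torsion-class axiom needing work is closure under extensions, and this is exactly where the $\tau$-rigidity of $M$ enters. I would first observe that $\Hom_A(M,\tau M)=0$ forces $\Hom_A(X,\tau M)=0$ for every $X\in\Fac M$: any morphism $X\to\tau M$ precomposed with the defining epimorphism $M^m\twoheadrightarrow X$ lands in $\Hom_A(M,\tau M)^m=0$ and therefore vanishes, since $M^m\twoheadrightarrow X$ is epic. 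Auslander--Reiten duality then converts this into the Ext-projectivity statement $\Ext^1_A(M,X)=0$ for all $X\in\Fac M$. Granting this, for a short exact sequence $0\to X\to Y\to Z\to 0$ with $X,Z\in\Fac M$ I would fix epimorphisms $M^a\twoheadrightarrow Z$ and $M^b\twoheadrightarrow X$; the vanishing $\Ext^1_A(M^a,X)=0$ lets me lift the first through $Y\to Z$, and combining this lift with (the inclusion composed with) the second gives an epimorphism $M^a\oplus M^b\twoheadrightarrow Y$, so $Y\in\Fac M$. Functorial finiteness of $\Fac M$ is the content of the cited Auslander--Smal\o{} theorem: covariant finiteness is immediate as $M$ generates, while contravariant finiteness is the substantial half. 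Since $\Fac M$ depends only on $M$, this shows $\Phi$ is a well-defined map into functorially finite torsion classes.

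For the bijection on $\tau$-tilting pairs I would treat injectivity and surjectivity separately. For injectivity, the key observation is that the Ext-projective objects of any torsion class $\mathcal T$ form an additive subcategory possessing a unique basic additive generator, and that when $\mathcal T=\Fac M$ with $M$ $\tau$-rigid this generator is $M$ itself; hence $\Fac M=\Fac M'$ forces $M\cong M'$, and the projective part is then determined as the sum of indecomposable projectives lying outside the support of $M$, so $(M,P)=(M',P')$. For surjectivity, given a functorially finite torsion class $\mathcal T$ I would set $M:=P(\mathcal T)$, the sum of the finitely many indecomposable Ext-projectives furnished by Auslander--Smal\o{}; one verifies $\Fac M=\mathcal T$ and that $M$ is $\tau$-rigid, and takes $P$ to be the projective complement recording the vertices outside the support, producing a $\tau$-rigid pair with $\Phi(M,P)=\mathcal T$.

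The crux, and the step I expect to be the main obstacle, is the numerical identity $|M|+|P|=n$ that promotes the $\tau$-rigid pair produced in the surjectivity step to a genuine $\tau$-tilting pair --- equivalently, the completeness statement that the Ext-projectives of a functorially finite torsion class cannot be properly enlarged inside a larger torsion class without leaving $\mathcal T$. This is the real content of Adachi--Iyama--Reiten's Theorem 2.7 and is not formal; it rests on a Bongartz-type completion argument showing that any $\tau$-rigid pair with $|M|+|P|<n$ admits a proper enlargement, so that the maximality encoded by $\mathcal T=\Fac P(\mathcal T)$ corresponds precisely to the count reaching $n$. Once this completeness is secured, injectivity and surjectivity together yield that $\Phi$ restricts to a bijection between $\tau$-tilting pairs and functorially finite torsion classes.
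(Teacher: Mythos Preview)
The paper does not supply a proof of this theorem: it appears in Section~\ref{sc:Setting} as cited background, attributed to \cite[Theorem~2.7]{AIR} and \cite[Theorem~5.10]{Auslander1981}, with no argument given. There is therefore nothing to compare your proposal against in the present paper; what you have written is a reasonable sketch of how the proof runs in those original sources (Auslander--Smal\o{} for the functorial finiteness of $\Fac M$, Adachi--Iyama--Reiten for the bijection via Bongartz completion).

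One small inaccuracy in your injectivity paragraph: the assertion ``when $\mathcal T=\Fac M$ with $M$ $\tau$-rigid this generator is $M$ itself'' is false for general $\tau$-rigid $M$. For instance, over the path algebra of $1\to 2$, taking $M=P(1)$ gives $\Fac M=\mathrm{add}(P(1)\oplus S(1))$, and $S(1)$ is Ext-projective there as well, so the basic Ext-projective generator is $P(1)\oplus S(1)\not\cong M$. The correct statement is that $M$ equals the basic Ext-projective generator of $\Fac M$ precisely when $(M,P)$ is $\tau$-tilting; since you are proving injectivity only on $\tau$-tilting pairs this does not break your argument, but the phrasing should be tightened.
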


Recall that a torsion pair $(\T, \F)$ in $\mod A$ is a pair $\T$ and $\F$ of full subcategories in $\mod A$ such that $\Hom_A (X,Y)=0$ for every $X \in \T$ and $Y \in \F$ and which are maximal with respect to this property.
Given a torsion pair $(\T, \F)$, we say that $\T$ is a torsion class and $\F$ is a torsion free class. 
Moreover, $\T$ is a full subcategory of $\mod A$ closed under quotients and extensions, while $\F$ is a full subcategory of $\mod A$ closed under submodules and extensions.
It is well know that for every subcategory $\T$ closed under quotients and extensions there exists a torsion free class $\mathcal{F}$ such that $(\mathcal{T}, \mathcal{F})$ is a torsion pair. 
In particular, if $\mathcal{T}=\Fac M$ for some $A$-module $M$, then $\mathcal{F} = M^{\perp} = \left\{ N \in \mod A : \Hom_{A}(M,N)=0 \right\}$.

In this paper, given a module $M$, we denote by $T(M)$ the minimal torsion class containing $M$.

Recall that for every torsion pair $(\mathcal{T},\mathcal{F})$ and every module $M$ in mod$A$ there exists the a canonical short exact sequence
$$0\to tM\to M\to M/tM\to 0$$
where $tM\in\mathcal{T}$ and $M/tM\in\mathcal{F}$, which is unique up to isomorphism.

\subsection{Integer vectors associated to modules}

In this paper we denote by $K_{0}(A)$ the Grothendieck group of $A$. By abuse of notation, for every $M$ in $\mod A$ we identify its equivalence class in $K_{0}(A)$ with its dimension vector, and we denote it by $[M]$. 

For each cluster algebra, one can associate to it two different and complementary set of vectors: the $c$-vectors and $g$-vectors (see \cite{Fomin2007}). 
In this paper we study their representation theoretic versions. 
The definition of $g$-vectors is the following. 

\begin{definition}\cite[Section 5]{AIR}
Let $A$ be an algebra, $M$ be an $A$-module and $$P_1\longrightarrow P_0\longrightarrow M\longrightarrow 0$$ be the minimal projective presentation of $M$, where $P_0=\bigoplus\limits_{i=1}^n P(i)^{a_i}$ and $P_1=\bigoplus\limits_{i=1}^n P(i)^{a'_i}$. 
The $g$-vector of $M$ is set to be
$$g^M=(a_1-a'_1, a_2-a'_2,\dots, a_n-a'_n).$$
\end{definition}

There is important homological information that arises when $g$-vectors and dimension vectors interplay, as showed by Auslander and Reiten in \cite{Auslander1985}.

\begin{theorem}\cite[Theorem 1.4.(a)]{Auslander1985}\label{formula}
Let $M$ and $N$ be arbitrary modules over an algebra $A$. Then we have
$$\langle g^{M},[N]\rangle=\dim_k(\emph{Hom}_A(M,N))-\dim_k(\emph{Hom}_A(N,\tau_A M)).$$
\end{theorem}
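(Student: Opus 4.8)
This is the classical Auslander--Reiten formula; the plan is to reprove it by comparing, through the minimal projective presentation of $M$, an Euler-type alternating sum of $\Hom$-dimensions, and then identifying the remaining term with $\Hom_A(N,\tau M)$ via the transpose of $M$.

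First I would fix the minimal projective presentation $P_1 \xrightarrow{f} P_0 \to M \to 0$ with $P_0 = \bigoplus_{i=1}^n P(i)^{a_i}$ and $P_1 = \bigoplus_{i=1}^n P(i)^{a'_i}$, so that $g^M = (a_1 - a'_1, \dots, a_n - a'_n)$. Applying the contravariant functor $\Hom_A(-,N)$ to this presentation produces the four-term exact sequence
$$0 \to \Hom_A(M,N) \to \Hom_A(P_0,N) \xrightarrow{f^*} \Hom_A(P_1,N) \to \mathrm{Coker}(f^*) \to 0,$$
where $f^* = \Hom_A(f,N)$; exactness at $\Hom_A(P_0,N)$ holds because a morphism $P_0 \to N$ vanishing on the image of $f$ factors through $M$. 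Since $\dim_k\Hom_A(P(i),N)$ equals the $i$-th coordinate of $[N]$, additivity of $\dim_k$ along the sequence gives
$$\langle g^M,[N]\rangle = \dim_k\Hom_A(P_0,N) - \dim_k\Hom_A(P_1,N) = \dim_k\Hom_A(M,N) - \dim_k\mathrm{Coker}(f^*).$$
Hence the theorem reduces to the identity $\dim_k\mathrm{Coker}(f^*) = \dim_k\Hom_A(N,\tau M)$.

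To obtain it I would use the natural isomorphism $\Hom_A(Q,N) \cong N \otimes_A Q^*$, valid for every finitely generated projective right $A$-module $Q$, where $Q^* := \Hom_A(Q,A)$ is regarded as a left $A$-module. Naturality in $Q$ applied to $f$ identifies $\mathrm{Coker}(f^*)$ with the cokernel of $N \otimes_A P_0^* \to N \otimes_A P_1^*$, which, by right-exactness of $N \otimes_A -$, equals $N \otimes_A \mathrm{Tr}\, M$, where $\mathrm{Tr}\, M := \mathrm{Coker}(P_0^* \to P_1^*)$ is the transpose of $M$. Finally, the tensor--hom adjunction over $k$ yields $D(N \otimes_A \mathrm{Tr}\, M) \cong \Hom_A(N, D\,\mathrm{Tr}\, M) = \Hom_A(N,\tau M)$, where $D = \Hom_k(-,k)$ and $\tau M = D\,\mathrm{Tr}\, M$; since $D$ preserves $k$-dimension, $\dim_k\mathrm{Coker}(f^*) = \dim_k\Hom_A(N,\tau M)$, and substituting into the display above completes the argument.

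The one point requiring care is the bookkeeping of module sides --- $\mathrm{Tr}\, M$ is naturally a left $A$-module and $\tau M$ a right $A$-module --- together with the check that the isomorphism $\Hom_A(Q,N) \cong N \otimes_A Q^*$ is natural enough in $Q$ that the two cokernels are genuinely isomorphic rather than merely equidimensional. Everything else --- left-exactness of $\Hom_A(-,N)$, the computation $\dim_k\Hom_A(P(i),N) = [N]_i$, right-exactness of $N \otimes_A -$, and the $k$-duality --- is routine, and minimality of the projective presentation enters only through the definition of $g^M$, any projective presentation of $M$ giving the same alternating sum.
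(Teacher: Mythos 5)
Your argument is correct, and it is essentially the standard proof of this formula: the paper does not prove the statement but cites it from Auslander--Reiten, and your route --- applying $\Hom_A(-,N)$ to the minimal presentation, taking the alternating sum of dimensions, and identifying the cokernel of $f^*$ with $N\otimes_A \mathrm{Tr}\,M$, whose $k$-dual is $\Hom_A(N,\tau M)$ --- is exactly the intended one. One small caveat: your closing remark that any projective presentation gives the same alternating sum is not quite right, since adding a superfluous summand $Q\to 0$ to $P_1$ changes both the alternating sum and the transpose; minimality is genuinely needed both for $g^M$ and for the identification $\tau M = D\,\mathrm{Tr}\,M$, but as your proof uses the minimal presentation throughout, this does not affect its validity.
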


In particular, if we restrict ourselves to $\tau$-tilting pairs, $g$-vectors have other emerging features. 
See for instance the following result proved by Adachi, Iyama and Reiten in \cite{AIR}.

\begin{theorem}\cite[Theorem 5.1]{AIR}\label{basis}
Let $(M,P)$ be a $\tau$-tilting pair. Then the set 
$$\{g^{M_{1}},\dots, g^{M_{k}},-g^{P_{k+1}},\dots, -g^{P_{n}}\}$$
form a basis of $\mathbb{Z}^{n}$.
\end{theorem}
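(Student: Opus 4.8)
The plan is to prove the equivalent statement that the $n\times n$ integer matrix $G_{(M,P)}$ with columns $g^{M_1},\dots ,g^{M_k},-g^{P_{k+1}},\dots ,-g^{P_n}$ belongs to $\mathrm{GL}_n(\mathbb Z)$. Because $(M,P)$ is $\tau$-tilting we have $|M|+|P|=k+(n-k)=n$, so $G_{(M,P)}$ is genuinely a square matrix of size $n$; hence it suffices to show that its columns \emph{generate} $\mathbb Z^n$ as an abelian group. Indeed, a surjective endomorphism of the Noetherian $\mathbb Z$-module $\mathbb Z^n$ is automatically an isomorphism, and an invertible integer matrix has determinant $\pm 1$, so the columns then form a $\mathbb Z$-basis of $\mathbb Z^n$.

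To obtain generation I would move to the bounded homotopy category $K^b(\proj A)$, whose Grothendieck group is canonically $K_0(\proj A)\cong\mathbb Z^n$, with $[P(i)]$ playing the role of the $i$-th standard basis vector. Under this identification, if $P_1\to P_0\to N\to 0$ is a minimal projective presentation, then $g^N=[P_0]-[P_1]$ is exactly the class of the two-term complex with $P_0$ in degree $0$ and $P_1$ in degree $-1$. To $(M,P)$ one associates, following \cite{AIR}, the two-term complex
$$\mathbf T=\Bigl(\bigoplus_{i=1}^k\bigl(P^{\,i}_{1}\to P^{\,i}_{0}\bigr)\Bigr)\ \oplus\ \Bigl(\bigoplus_{j=k+1}^n P_j[1]\Bigr),$$
where $P^{\,i}_{1}\to P^{\,i}_{0}\to M_i\to 0$ is the minimal projective presentation of $M_i$ and $P_j[1]$ denotes $P_j$ placed in degree $-1$; its class is $-[P_j]=-g^{P_j}$. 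The indecomposable summands of $\mathbf T$ have precisely the $n$ classes appearing in the statement, so it is enough to prove that those classes generate $\mathbb Z^n$.

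The substantive input is that $\mathbf T$ is a two-term \emph{silting} complex, so that $\mathrm{thick}(\mathbf T)=K^b(\proj A)$. Granting this, generation is immediate: every object of $K^b(\proj A)$ is built from $\mathbf T$ by finitely many shifts, mapping cones and passages to direct summands, and each such operation keeps the $K_0$-class inside the subgroup spanned by the classes of the summands of $\mathbf T$; since $K_0\bigl(K^b(\proj A)\bigr)=\mathbb Z^n$, that subgroup is all of $\mathbb Z^n$. The main obstacle is therefore exactly the verification that $\mathrm{thick}(\mathbf T)=K^b(\proj A)$: the $\tau$-rigidity conditions $\Hom_A(M,\tau M)=0$ and $\Hom_A(P,M)=0$ only translate into $\mathbf T$ being \emph{presilting} (that is, $\Hom_{K^b(\proj A)}(\mathbf T,\mathbf T[1])=0$), and one needs in addition the maximality packaged in the equality $|M|+|P|=n$. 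To establish it I would run the Bongartz-type completion argument of \cite{AIR}: if some $P(\ell)$ failed to lie in $\mathrm{thick}(\mathbf T)$, one would manufacture a strictly larger presilting complex, contradicting $|M|+|P|=n$.

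Finally, I would note the more hands-on alternative, staying inside $\mod A$, that fits the rest of the paper: by the Auslander--Reiten formula of Theorem \ref{formula}, $\langle g^{M},[N]\rangle=\dim_k\Hom_A(M,N)-\dim_k\Hom_A(N,\tau M)$, which on projectives (where $\tau P(i)=0$) is just the standard inner product on $\mathbb Z^n$; separating the positive and negative coefficients in a hypothetical relation among the $g^{M_i}$ and $-g^{P_j}$ yields $U,V\in\mathrm{add}(M\oplus P)$ with no common summand and $g^U=g^V$, and the injectivity of $g$-vectors on $\tau$-rigid pairs forces $U=V=0$, giving $\mathbb Z$-linear independence. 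This, however, only bounds the columns to a sublattice of finite index; upgrading it to the full lattice $\mathbb Z^n$ is precisely the generation point above, and this is where the homotopy-category formulation makes the argument cleanest.
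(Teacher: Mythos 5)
The paper does not actually prove this statement---it is quoted from \cite[Theorem 5.1]{AIR}---so your proposal can only be measured against the original argument, and your outline is indeed the standard one: pass to the two-term silting complex $\mathbf T$ associated to $(M,P)$, identify the $g$-vectors with the classes of its indecomposable summands in $K_0(K^b(\proj A))\cong\mathbb Z^n$, and deduce that a generating set of $n$ elements of $\mathbb Z^n$ is a basis. Your identification of the substantive input (presilting plus $|M|+|P|=n$ implies silting, via Bongartz-type completion) and your closing remark that the Auslander--Reiten pairing only yields linear independence, hence a finite-index sublattice, are both accurate.

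There is, however, one genuinely flawed step: the claim that thick generation alone gives generation of $K_0$. You argue that every object of $K^b(\proj A)$ is obtained from $\mathbf T$ by shifts, cones and passage to direct summands, and that each such operation keeps the $K_0$-class inside the subgroup $H$ spanned by the classes $[\mathbf T_1],\dots,[\mathbf T_n]$. For shifts and cones this is true, but for direct summands it is not: from $[X\oplus Y]\in H$ one cannot conclude $[X]\in H$ (in $D^b(k)$ the class of $k\oplus k$ lies in $2\mathbb Z$ while the class of its summand $k$ does not; likewise $\mathrm{thick}(k\oplus k)=D^b(k)$ although $[k\oplus k]$ generates only $2\mathbb Z$, so thick generation genuinely does not imply $K_0$-generation). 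Hence ``$\mathrm{thick}(\mathbf T)=K^b(\proj A)$'' is not by itself enough, and the finer additive structure provided by silting must be used. The standard repair, and the route taken in \cite{AIR}, is to produce for each indecomposable projective $P(i)$ a triangle $T^1_i\to T^0_i\to P(i)\to T^1_i[1]$ with $T^0_i,T^1_i\in\mathrm{add}(\mathbf T)$ (take a minimal left $\mathrm{add}(\mathbf T)$-approximation of $P(i)$ and use the silting property to show its cone lies in $\mathrm{add}(\mathbf T)[1]$); then $[P(i)]=[T^0_i]-[T^1_i]\in H$ directly, and since the $[P(i)]$ form a basis of $K_0(K^b(\proj A))$, the $n$ classes $[\mathbf T_i]$ generate it and therefore form a basis. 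With that substitution your argument is complete.
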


This important property of $g$-vectors allowed Fu to define $c$-vectors as follows.

\begin{definition}\cite[Subsection 3.1]{Fu2017}\label{def:C-mat}
Let $(M,P)$ be a $\tau$-tilting pair and consider the set 
$$\{g^{M_{1}},\dots, g^{M_{k}},-g^{P_{k+1}},\dots, -g^{P_{n}}\}$$
of $g$-vectors corresponding to the indecomposable direct factors of $M$ and $P$. 
Define the $g$-matrix $G_{(M,P)}$ of $(M,P)$ as 
$$G_{(M,P)}=\left(g^{M_{1}},\dots, g^{M_{k}},-g^{P_{k+1}},\dots, -g^{P_{n}}\right)=\left( \begin{matrix}  
				(g^{M_{1}})_{1} & \dots   & (-g^{P_{n}})_{1} \\
				\vdots      & \ddots & \vdots     \\
				(g^{M_{1}})_{n}  & \dots   & (-g^{P_{n}})_{n}
		\end{matrix} \right).$$
Then the $c$-matrix of $(M,P)$ is defined as
$$C_{(M,P)}=(G^{-1}_{(M,P)})^{T}.$$ 
Also, each column of $C_{(M,P)}$ is called a \textit{$c$-vector} of $A$. Moreover, we denote by $\mathsf{cv}(A)$ the set of all $c$-vectors of $A$. 
\end{definition}

\begin{remark}
Note that, since $\mod A$ is a Krull-Schmidt category, the definition of $G_{(M,P)}$ for every $\tau$-tilting pair is unique up to permutation of columns.
Therefore, $C_{(M,P)}$ is also uniquely defined up to permutation of columns.
\end{remark}

In this paper we are interested in the property of sign-coherence of $c$-vectors. The formal definition is the following.

\begin{definition}\cite{Fomin2003}
A vector $\v=(\v_{1}, \dots, \v_{n})$ is said to be \textit{positive} if $\v_{i} \geq 0$ for every $1\leq i\leq n$. Dually, $\v$ is said to be \textit{negative} if $\v_{i}\leq 0$ for every $1\leq i \leq n$. Finally, a set $\mathcal{S}$ of vectors is said to be \textit{sign-coherent} if $\v$ is either positive or negative for every $\v \in \mathcal{S}$.
\end{definition}

The sign-coherence of $c$-vectors was first formulated as a conjecture in the cluster setting by Fomin and Zelevinsky in \cite{Fomin2007}. 
This conjecture was proven to be true for quiver cluster algebras by Derksen, Weymann and Zelevinsky in \cite{Derksen2010}.
Later, Fu introduced $c$-vectors for every finite dimensional algebra in \cite{Fu2017} and prove the following.

\begin{theorem}\cite[Theorem 3.1]{Fu2017}
Let $A$ be an algebra. Then the set of $c$-vectors of $A$ is sign-coherent.
\end{theorem}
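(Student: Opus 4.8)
The plan is to work inside the vector space $\mathbb{R}^n$ and exploit the fact that the $g$-matrix $G_{(M,P)}$ is invertible (Theorem \ref{basis}), so that the columns $c_1, \dots, c_n$ of $C_{(M,P)} = (G_{(M,P)}^{-1})^T$ form precisely the dual basis to the $g$-vectors $\{g^{M_1}, \dots, g^{M_k}, -g^{P_{k+1}}, \dots, -g^{P_n}\}$ with respect to the canonical inner product $\langle -,-\rangle$. Concretely, for each $r$ the vector $c_r$ is characterised by $\langle g^{M_i}, c_r\rangle = \delta_{ir}$ for $i \le k$ and $\langle -g^{P_j}, c_r\rangle = \delta_{jr}$ for $j > k$. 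The goal is to produce, for each $r$, a brick $B_r$ with $[B_r] = \pm c_r$, and the natural candidate should be built from the torsion pair machinery attached to $(M,P)$: I expect $B_r$ to be (up to a sign depending on whether the $r$-th basis vector is an $M$-summand or a $P$-summand) the dimension vector of a canonical ``minimal'' or ``layer'' module sitting between consecutive torsion classes in the mutation of $(M,P)$, i.e. essentially the brick label of the arrow out of the chamber $\mathfrak{C}_{(M,P)}$ in direction $r$.

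First I would recall the description of stability functions and walls from \cite{BSTwc}: a module $N$ is semistable with respect to a vector $\theta \in \mathbb{R}^n$ iff $\langle \theta, [N]\rangle = 0$ and $\langle \theta, [N']\rangle \le 0$ for every quotient $N'$ of $N$ (equivalently $\ge 0$ for every submodule). Using Theorem \ref{formula}, the $g$-vector of a $\tau$-rigid module $M_i$ defines a stability function for which $\operatorname{Fac} M_i$-type modules behave controllably: $\langle g^{M_i}, [N]\rangle = \dim_k \operatorname{Hom}_A(M_i, N) - \dim_k \operatorname{Hom}_A(N, \tau M_i)$, which vanishes on modules in the ``silting interval'' cut out by $(M,P)$. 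The key step is then to identify, for each direction $r$, a unique indecomposable module $B_r$ that is stable with respect to a suitable $g$-vector combination and whose dimension vector is forced by the duality relations above to equal $\pm c_r$. That $B_r$ is a brick should follow because it is a stable module of minimal dimension in its stability space — stable modules with respect to a stability function are Schurian, hence bricks.

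The main obstacle I anticipate is the \emph{existence and uniqueness} of the brick $B_r$: one must show that the wall of $\mathfrak{C}_{(M,P)}$ in direction $r$ supports exactly one (up to dimension vector) relevant brick, and that its dimension vector is a primitive integer vector equal to $\pm c_r$ rather than some multiple or some other lattice point on the wall. I would handle this by the mutation approach of \cite{AIR}: mutating $(M,P)$ at the $r$-th summand yields another $\tau$-tilting pair $(M',P')$, and the two functorially finite torsion classes $\operatorname{Fac} M \supsetneq \operatorname{Fac} M'$ (or the reverse) differ by a ``minimal inclusion'' of torsion classes; by the theory of such minimal inclusions (brick labelling, cf. \cite{Demonet2017, BCZ}) there is a unique brick $B_r$ lying in $\operatorname{Fac} M \cap (M')^\perp$ (or the analogous intersection). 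One then computes $\langle g^{M_i}, [B_r]\rangle$ and $\langle g^{P_j}, [B_r]\rangle$ directly via Theorem \ref{formula}: all Hom- and Ext-terms vanish except the one corresponding to index $r$, which contributes $\pm 1$, matching exactly the defining equations of $c_r$ as the dual basis vector. Since $G_{(M,P)}$ is a basis, the vector $[B_r]$ is then uniquely pinned down to $\pm c_r$. Finally, sign-coherence is immediate: $[B_r]$ is the dimension vector of an honest module, hence a nonnegative vector, so $c_r = \pm [B_r]$ is either positive or negative.
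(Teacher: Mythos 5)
Your strategy is the same as the paper's: realise the columns of $C_{(M,P)}$ as the dual basis to the $g$-vectors of $(M,P)$ (Theorem \ref{basis}), take for $B_r$ the unique brick attached to the wall of $\mathfrak{C}_{(M,P)}$ in direction $r$ (this is exactly the $\theta_r$-stable module supplied by Theorem \ref{stablemodcat}, which also settles the existence and uniqueness you worry about), and pin down $[B_r]$ by pairing it against the $g$-vectors via Theorem \ref{formula}. The orthogonality $\langle g^i,[B_r]\rangle=0$ for $i\neq r$ is immediate from $\theta_r$-stability, and $\langle g^r,[B_r]\rangle\neq 0$ follows from Theorem \ref{stablemodcat} applied to the full pair $(M,P)$; this already makes $G_{(M,P)}^{T}X_{(M,P)}$ an invertible diagonal matrix, so each $c_r$ is a nonzero scalar multiple of the nonnegative vector $[B_r]$. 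For the statement actually under review --- sign-coherence --- that is enough, and your argument is essentially the paper's.

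Where your proposal has a genuine gap is in the stronger claim you set out to prove, namely $[B_r]=\pm c_r$ on the nose. You correctly flag the danger that $[B_r]$ might be a non-primitive lattice point on the wall, but then dispose of it by asserting that the surviving term in Theorem \ref{formula} ``contributes $\pm 1$''. Nothing in your argument bounds $\dim_k\Hom_A(M_r,B_r)$ by $1$; this is precisely the content of Lemma \ref{lem:stables}, which first uses Proposition \ref{prop:SinFac} to place $B_r$ in $\Fac M$ (killing the term $\Hom_A(B_r,\tau M_r)$) and then runs a pullback/Ext-projectivity argument to exhibit $M_r$ as a direct summand of a module $E_j$ with $\Hom_A(E_j,B_r)\cong\Hom_{\tilde{A}_{(M,P)_r}}(\tilde{A}_{(M,P)_r},S)$, one-dimensional because $B_r$ corresponds to the unique simple module under the $\tau$-tilting reduction equivalence of \cite{Jasso2015}. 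Moreover, in the case $\langle g^r,[B_r]\rangle<0$ a direct computation via Theorem \ref{formula} is genuinely problematic, since both $\Hom_A(B_r,\tau M_r)$ and $\Hom_A(P_r,B_r)$ may be nonzero; the paper avoids it by passing to the mutated pair $(\tilde{M},\tilde{P})$ of \cite[Theorem 2.18]{AIR}, where the pairing equals $+1$ by the previous case, and then flipping the sign with the elementary dual-basis Lemma \ref{lem:minus}. Without these two ingredients your argument yields $c_r=\lambda_r^{-1}[B_r]$ for some nonzero integer $\lambda_r$ --- sufficient for sign-coherence, but not for Theorem \ref{th:c-matrix}.
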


We denote the set of positive and negative $c$-vectors by $\mathsf{cv}^{+}(A)$ and $\mathsf{cv}^{-}(A)$, respectively.

\subsection{Stability conditions}\label{ssc:stability}

We will attack the problem of sign-coherence of $c$-vectors from the point of view of stability conditions. 
The definition of $\theta$-semistables modules was introduced by King in \cite{KING1994} in terms of linear functionals. 
Given that we intend to have a geometrical realisation of these objects, we adapt the definition in terms of vectors as follows.

\begin{definition}\cite[Definition 1.1]{KING1994}\label{stability}

Let $\theta$ be a vector of $\mathbb{R}^n$, the dual vector space of $\mathbb{R}^n$. An $A$-module $M\in\mod A$ is called \textit{$\theta$-stable} (or \textit{$\theta$-semistable}) if $\langle \theta, [M]\rangle=0$ and $\langle \theta, [L]\rangle<0$ ($\langle \theta, [L]\rangle\leq 0$, respectively) for every proper submodule $L$ of $M$. 
\end{definition}

Given a vector $\theta\in \mathbb{R}^n$, we denote by $\mod A^{ss}_{\theta}$ the category of $\theta$-semistable modules. 
It is known that $\mod A^{ss}_{\theta}$ is an abelian category for every $\theta$, where $\theta$-stable modules correspond to the simple objects in $\mod A_{\theta}^{ss}$. 
Note that a $\theta$-stable module is necessarily a brick in $\mod A$ by \cite[Theorem 1]{Rudakov1997} and \cite[Proposition 3.4]{Rudakov1997}. 

Sometimes, a more precise description of $\mod A^{ss}_{\theta}$ can be given. 
For instance, if $\theta$ can be defined using the $g$-vectors of a $\tau$-rigid pair $(M,P)$, such a description was given in \cite{BSTwc}. 
The result is the following.

\begin{theorem}\label{stablemodcat}\cite[Theorem 3.14]{BSTwc}
Let $(M,P)$ be a $\tau$-rigid pair and consider $\theta_{\alpha(M,P)}\in\mathbb{R}^n$ defined as
$$\theta_{\alpha(M,P)}:= \sum_{i=1}^{k} \alpha_i g^{M_{i}} - \sum_{j=k+1}^{t} \alpha_j g^{P_{j}},$$ 
where $M=\bigoplus_{i=1}^kM_i$, $P=\bigoplus_{j=k+1}^tP_j$ and  $\alpha_l > 0$ for every $1 \leq l \leq t$. 
Then there exists an algebra $\tilde{A}_{(M,P)}$ which is independent of $\alpha$ such that there is an equivalence of categories 
$$F: \emph{mod} A^{ss}_{\theta_{\alpha(M,P)}} \longrightarrow \emph{mod} \tilde{A}_{(M,P)}$$
from category $\emph{mod} A^{ss}_{\theta_{\alpha(M,P)}}$ of $\theta_{\alpha(M,P)}$-semistable modules to $\emph{mod} \tilde{A}_{(M,P)}$, the module category of an algebra $\tilde{A}_{(M,P)}$, with quasi-inverse 
$$G:\emph{mod} \tilde{A}_{(M,P)} \to \emph{mod} A^{ss}_{\theta_{\alpha(M,P)}}.$$ 
In particular, there are exactly $rk(K_0(A))-t$ non-isomorphic $\theta_{\alpha(M,P)}$-stable modules.
\end{theorem}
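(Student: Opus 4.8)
The plan is to identify $\mod A^{ss}_{\theta_{\alpha(M,P)}}$ with a wide subcategory of $\mod A$ attached to $(M,P)$ which visibly does not depend on $\alpha$, and then to realise that wide subcategory as the module category of a smaller algebra by means of $\tau$-tilting reduction. Write $\theta := \theta_{\alpha(M,P)}$. As recalled in Section \ref{ssc:stability}, $\mod A^{ss}_{\theta}$ is abelian and extension-closed, with the $\theta$-stable modules as its simple objects; hence the final assertion will follow as soon as we produce an exact equivalence $\mod A^{ss}_{\theta} \simeq \mod \tilde{A}_{(M,P)}$ with $\mathrm{rk}(K_0(\tilde{A}_{(M,P)})) = n - t$, since then the $\theta$-stable modules correspond bijectively to the $n-t$ simple $\tilde{A}_{(M,P)}$-modules.

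Let $(\widehat{M}, P)$ be the Bongartz completion of $(M,P)$ to a $\tau$-tilting pair \cite{AIR}, with associated torsion pair $(\Fac \widehat{M}, \widehat{M}^{\perp})$, where $\Fac \widehat{M} = \{ N : \Hom_A(N, \tau M) = 0 = \Hom_A(P, N) \}$; since $\widehat{M} = M \oplus B$ we have $\Fac M \subseteq \Fac \widehat{M}$, so this torsion pair sits above $(\Fac M, M^{\perp})$. The key step is to establish
$$\mod A^{ss}_{\theta} = \Fac \widehat{M} \cap M^{\perp},$$
whose right-hand side is manifestly independent of $\alpha$. First, $\Fac \widehat{M} \cap M^{\perp} \subseteq \mod A^{ss}_{\theta}$: if $N \in \Fac \widehat{M} \cap M^{\perp}$, then $\Hom_A(M_i, N) = \Hom_A(N, \tau M_i) = \Hom_A(P_j, N) = 0$ and $\tau P_j = 0$, so Theorem \ref{formula} gives $\langle g^{M_i}, [N] \rangle = \langle g^{P_j}, [N] \rangle = 0$, hence $\langle \theta, [N] \rangle = 0$; and for every submodule $L \le N$ one has $\Hom_A(M_i, L) = 0 = \Hom_A(P_j, L)$, so again by Theorem \ref{formula} $\langle \theta, [L] \rangle = -\sum_i \alpha_i \dim_k \Hom_A(L, \tau M_i) \le 0$. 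Conversely, let $N$ be $\theta$-semistable and let $tN \le N$ be its torsion submodule for $(\Fac \widehat{M}, \widehat{M}^{\perp})$; semistability gives $\langle \theta, [tN] \rangle \le 0$, hence $\langle \theta, [N/tN] \rangle = -\langle \theta, [tN] \rangle \ge 0$, while $N/tN \in \widehat{M}^{\perp}$ and for every $X \in \widehat{M}^{\perp}$ Theorem \ref{formula} yields $\langle \theta, [X] \rangle = -\sum_i \alpha_i \dim_k \Hom_A(X, \tau M_i) - \sum_j \alpha_j \dim_k \Hom_A(P_j, X) \le 0$, with equality (as all $\alpha_l > 0$) forcing $X \in \Fac \widehat{M}$. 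Thus $N/tN \in \Fac \widehat{M} \cap \widehat{M}^{\perp} = 0$, so $N \in \Fac \widehat{M}$, and then $\langle \theta, [N] \rangle = \sum_i \alpha_i \dim_k \Hom_A(M_i, N) = 0$ forces $N \in M^{\perp}$. Hence $\mathcal{W} := \mod A^{ss}_{\theta} = \Fac \widehat{M} \cap M^{\perp}$, a wide subcategory of $\mod A$.

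Finally, it remains to invoke the $\tau$-tilting reduction of Jasso applied to the $\tau$-rigid pair $(M,P)$: it yields a finite-dimensional algebra $\tilde{A}_{(M,P)}$, built functorially from $(M,P)$ (for instance as a suitable subfactor of $\End_A(\widehat{M})$), together with a poset isomorphism between the interval $[\Fac M, \Fac \widehat{M}]$ of torsion classes of $A$ --- equivalently, the $\tau$-tilting pairs of $A$ having $(M,P)$ as a direct summand --- and the lattice of torsion classes of $\tilde{A}_{(M,P)}$; under the standard correspondence between intervals of functorially finite torsion classes and wide subcategories, $\mod \tilde{A}_{(M,P)}$ is exact-equivalent to the wide subcategory of $[\Fac M, \Fac \widehat{M}]$, which is exactly $\mathcal{W} = \Fac \widehat{M} \cap M^{\perp}$. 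Counting summands, $\widehat{M}$ has $n - |P|$ indecomposable summands, of which we quotient out the $|M|$ lying in $\mathrm{add}\, M$, so $\mathrm{rk}(K_0(\tilde{A}_{(M,P)})) = n - |M| - |P| = n - t$, and $\tilde{A}_{(M,P)}$ depends only on $(M,P)$. Taking $F$ to be the composite $\mod A^{ss}_{\theta} = \mathcal{W} \simeq \mod \tilde{A}_{(M,P)}$ and $G$ a quasi-inverse then finishes the proof, in view of the first paragraph. The main obstacle is this last step: transporting the precise statement of $\tau$-tilting reduction to a pair carrying a nonzero projective summand and matching conventions so that the algebra it outputs --- independent of $\alpha$, and of the stated rank --- has module category equal to precisely $\Fac \widehat{M} \cap M^{\perp}$; the inclusion $\mod A^{ss}_{\theta} \subseteq \Fac \widehat{M} \cap M^{\perp}$ above, handled via the torsion submodule $tN$ and the vanishing $\Fac \widehat{M} \cap \widehat{M}^{\perp} = 0$, is the other point needing care.
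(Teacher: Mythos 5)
The paper does not reprove this statement---it is quoted verbatim from \cite[Theorem 3.14]{BSTwc}, and the proof there proceeds exactly as you do: identify $\mbox{mod}\, A^{ss}_{\theta_{\alpha(M,P)}}$ with the manifestly $\alpha$-independent wide subcategory ${}^{\perp}(\tau M)\cap M^{\perp}\cap P^{\perp}$ (which is your $\mbox{Fac}\,\widehat{M}\cap M^{\perp}$ for the Bongartz completion $\widehat{M}$) via the Auslander--Reiten formula of Theorem \ref{formula}, and then realise that wide subcategory as $\mbox{mod}\,\tilde{A}_{(M,P)}$ by Jasso's $\tau$-tilting reduction. Your argument is correct, including the count $n-t$ of simples, so it takes essentially the same route as the source.
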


\begin{remark}
The algebra $\tilde{A}_{(M,P)}$ in the previous theorem correspond to the algebra arising from $\tau$-tilting reduction developed by Jasso in \cite{Jasso2015}.
The explicit construction of the algebra can be found in the original article. 
Also the main ideas are given in \cite[Section 2]{BSTwc}.
\end{remark}

Using the previous result, Br\"ustle, Smith and Treffinger constructed explicitly one $\theta_{\alpha(M,P)}$-stable module when $(M,P)$ is an almost $\tau$-tilting pair. 

\begin{proposition}\cite[Proposition 3.17]{BSTwc}\label{prop:generator}
Let $(M,P)$ be an almost $\tau$-tilting pair and $\theta_{\alpha(M,P)}$ be as before. 
Then there is a $\theta_{\alpha(M,P)}$-semistable module $N$ that is constructed as follows: 

Let $(M_1,P_1)$ and $(M_2,P_2)$ be the two $\tau$-tilting pairs containing $M$ and $P$ as a direct factor. Order them such that $\emph{Fac} M_1 \subset \emph{Fac} M_2$.
Then $N$ is the cokernel of the right \emph{add}$M$-approximation of $M_2$. 
\end{proposition}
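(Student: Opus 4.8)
The plan is to verify Definition~\ref{stability} directly for the module $N$, using the Auslander--Reiten formula (Theorem~\ref{formula}) as the computational engine and exploiting that $N$ is a quotient of $M_2$ on which $\Hom_A(M,-)$ vanishes. First I would make the construction explicit: if $g\colon M_0\to M_2$ is a right $\mathrm{add}\,M$-approximation (these exist, as $\mathrm{add}\,M$ is functorially finite), then $M_0\in\mathrm{add}\,M$, every morphism from an object of $\mathrm{add}\,M$ to $M_2$ factors through $g$, and $\mathrm{im}(g)=\mathrm{tr}_M(M_2)$; hence $N=M_2/\mathrm{tr}_M(M_2)$, a quotient of $M_2$. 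Moreover the ordering $\Fac M_1\subsetneq\Fac M_2$ forces $M_2\notin\Fac M$ (otherwise $\Fac M_2\subseteq\Fac M\subseteq\Fac M_1$, since $M$ is a direct summand of $M_1$), so $N\neq 0$. Everything then reduces to three vanishing statements: (i)~$\Hom_A(M,N)=0$; (ii)~$\Hom_A(N',\tau M)=0$ for every quotient $N'$ of $N$; (iii)~$\Hom_A(P,N)=0$. Statements (ii) and (iii) follow at once from the $\tau$-rigidity of $(M_2,P_2)$: any such $N'$ is a quotient of $M_2$, so $\Hom_A(N',\tau M)$ embeds into $\Hom_A(M_2,\tau M)$, which is a summand of $\Hom_A(M_2,\tau M_2)=0$; and $\Hom_A(P,N)$ is a quotient of $\Hom_A(P,M_2)=0$ because $P$ is projective and $M_2\twoheadrightarrow N$.

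The one substantial point — and the step I expect to be the main obstacle — is (i). Applying $\Hom_A(M,-)$ to $0\to\mathrm{tr}_M(M_2)\to M_2\to N\to 0$ and using that the image of every morphism $M\to M_2$ lies inside $\mathrm{tr}_M(M_2)$, the connecting homomorphism yields an embedding $\Hom_A(M,N)\hookrightarrow\Ext^1_A(M,\mathrm{tr}_M(M_2))$. Now $\mathrm{tr}_M(M_2)$ is a quotient of $M_0\in\mathrm{add}\,M$, so it belongs to $\Fac M$; and for a $\tau$-rigid module $M$ one has $\Ext^1_A(M,Z)=0$ for every $Z\in\Fac M$ (by Auslander--Reiten duality $\Ext^1_A(M,Z)$ is dual to a quotient of $\Hom_A(Z,\tau M)$, and the latter vanishes because $Z$ is a quotient of a power $M^{m}$ and $\Hom_A(M,\tau M)=0$). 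Hence $\Hom_A(M,N)=0$; this is the place where the $\tau$-rigidity of $M$ and the approximation property of $g$ are genuinely used.

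Finally I would assemble the stability conditions. Writing $\theta_{\alpha(M,P)}=\sum_{i}\alpha_i g^{M_i}-\sum_{j}\alpha_j g^{P_j}$, where $i$ runs over the indecomposable summands of $M$ and $j$ over those of $P$, Theorem~\ref{formula} together with $\tau P_j=0$ gives, for any module $L$,
\[
\langle\theta_{\alpha(M,P)},[L]\rangle=\sum_{i}\alpha_i\bigl(\dim_k\Hom_A(M_i,L)-\dim_k\Hom_A(L,\tau M_i)\bigr)-\sum_{j}\alpha_j\dim_k\Hom_A(P_j,L).
\]
For $L=N$, the vanishings (i), (ii) and (iii) annihilate every summand, so $\langle\theta_{\alpha(M,P)},[N]\rangle=0$. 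For a proper submodule $L\subsetneq N$, additivity of each $\langle g^{M_i},-\rangle$ along $0\to L\to N\to N/L\to 0$ gives $\langle g^{M_i},[L]\rangle=-\langle g^{M_i},[N/L]\rangle=-\dim_k\Hom_A(M_i,N/L)\leq 0$, the second equality using (ii) applied to the quotient $N/L$ of $N$; on the other hand $\langle g^{P_j},[L]\rangle=\dim_k\Hom_A(P_j,L)\geq 0$. Since the $\alpha_l$ are positive, it follows that $\langle\theta_{\alpha(M,P)},[L]\rangle\leq 0$. Together with $\langle\theta_{\alpha(M,P)},[N]\rangle=0$ this is precisely $\theta_{\alpha(M,P)}$-semistability of $N$, and it holds for every admissible choice of the coefficients $\alpha_l>0$.
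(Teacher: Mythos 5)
Your argument is correct, but note that the paper offers no proof of this statement to compare against: it is quoted verbatim from \cite[Proposition 3.17]{BSTwc}, so the only meaningful comparison is with that source. There the semistability of $N$ is deduced from the structural description of $\mbox{mod}\, A^{ss}_{\theta_{\alpha(M,P)}}$ as the wide subcategory $M^{\perp}\cap{}^{\perp}(\tau M)\cap P^{\perp}$, obtained via Jasso's $\tau$-tilting reduction (the mechanism behind Theorem \ref{stablemodcat}); one then only has to check that the cokernel of the approximation lies in that subcategory. You isolate essentially the same three vanishing conditions --- $\Hom_A(M,N)=0$, $\Hom_A(N',\tau M)=0$ for every quotient $N'$ of $N$, and $\Hom_A(P,N)=0$ --- but instead of invoking the reduction machinery you feed them directly into King's definition through the Auslander--Reiten formula of Theorem \ref{formula}, handling proper submodules $L\subsetneq N$ by passing to the quotient $N/L$. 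This makes the proof self-contained and arguably more elementary. The individual steps all check out: the identification of the image of the right $\mathrm{add}\,M$-approximation with the trace $\mathrm{tr}_M(M_2)$; the nonvanishing of $N$ from the strict inclusion $\Fac M_1\subsetneq\Fac M_2$ (which also rules out the degenerate completion $M_2=M$, $P_2=P\oplus Q$); the embedding $\Hom_A(M,N)\hookrightarrow\Ext^1_A(M,\mathrm{tr}_M(M_2))$ combined with the standard fact that $\Ext^1_A(M,Z)=0$ for $Z\in\Fac M$ when $M$ is $\tau$-rigid; and the sign analysis giving $\langle\theta_{\alpha(M,P)},[L]\rangle\leq 0$. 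No gaps.
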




\section{$c$-vectors as dimension vectors of bricks}\label{sc:Main}

This section is organised as follows.
In the first subsection, using theorem \ref{stablemodcat}, we find $n$ bricks $\{B_1, \dots, B_n \}$ which are naturally associated to a given $\tau$-tilting pair $(M,P)$.
Then we build a matrix $X_{(M,P)}$ using of dimension vectors of those bricks, which will play a key role in the rest of the paper.

In the second we show that one can always build the matrix $C_{(M,P)}$ from $X_{(M,P)}$. 
As a corollary it follows that every positive (negative) $c$-vectors is the (opposite of the) dimension vector of a brick in $\mod A$.
Hence, the sign coherence of $c$-vectors follows at once.

In the final subsection we study the relation between $c$-vectors and torsion classes.
More precisely, we show that the modules whose dimension vectors are positive the $c$-vectors of $C_{(M,P)}$ are the minimal set of bricks in $\mod A$ generating the torsion class $\Fac M$.
We also show the reciprocal, that is, we show that the dimension vectors of every minimal set of bricks in $\mod A$ generating a functorially finite torsion class are the positive columns of the $C$-matrix $C_{(M,P)}$ for a certain $\tau$-tilting pair $(M,P)$.

\subsection{Bricks associated to a $\tau$-tilting pair}\label{ssc:bricks}

Let $(M,P)$ be a $\tau$-tilting pair. 
Then their decomposition as sum of indecomposable direct summands is the following.
$$M = \bigoplus_{i=1}^k M_i \text{ \qquad\qquad\qquad }P = \bigoplus_{j=k+1}^n P_j$$
We can suppose without loss of generality the that the indecomposable direct summands of $M$ and $P$ are pairwise non-isomorphic, i.e., $M$ and $P$ are basic.

Now, given $(M,P)$, one can construct for every $1\leq r\leq n$ an almost $\tau$-tilting pair $(M,P)_{r}$ as follows.
$$(M,P)_{r}=\left(\bigoplus_{i\neq r} M_{i}, \bigoplus_{j\neq r} P_{j} \right)$$

Then, define for every $r$ the vector $\theta_r$ in the following way.
$$\theta_r:= \sum_{i\neq r} g^{M_i} - \sum_{j\neq r} g^{P_j}$$

Hence, theorem \ref{stablemodcat} implies that for every $r$ there exists a brick $B_r$ which is $\theta_r$-stable.
Moreover, $B_r$ is unique up to isomorphism. 
Therefore, for every $\tau$-tilting pair we have a set $\mathcal{B}_{(M,P)}$ of $\theta_r$-stable modules for every $r$ between $1$ and $n$. 
Formally:
$$\mathcal{B}_{(M,P)}:= \{B_r : B_r \text{ is $\theta_r$-stable for $1 \leq r \leq n$ }\}.$$

Now we are ready to define $X_{(M,P)}$ as the square matrix having as $r$-th column the dimension vector $[B_r]$ of the brick $B_r \in \mathcal{B}_{(M,P)}$. 

$$X_{(M,P)}:=\left(\begin{matrix}
				[B_{1}] &|& [B_2] &|& \dots   &|& [B_{n}]
		\end{matrix}\right)=\left(\begin{matrix}
				[B_{1}]_{1} & \dots   & [B_{n}]_{1} \\
				\vdots      & \ddots & \vdots       \\
				[B_{1}]_{n}  & \dots   & [B_{n}]_{n}
		\end{matrix}\right)
$$
If one takes the set of all $g$-vectors and the set of all bricks in the module category that can be obtained as we just did, there is no well defined function between these two sets. 
However, if we restrict ourselves to a particular $\tau$-tilting pair the situation changes, since the previous construction gives a bijection between the $g$-vectors of the indecomposable direct summands of $(M,P)$ and the elements of $\mathcal{B}_{(M,P)}$.
Therefore, sometimes we may fall in an abuse of language saying that a the brick $B_r\in \mathcal{B}_{(M,P)}$ is the brick \textit{associated} or \textit{corresponding} to the $g$-vector $g^{M_r}$ if $1\leq r \leq k$ or $-g^{P_r}$ if $k+1 \leq r \leq n$.

\subsection{The relation between $C_{(M,P)}$ and $X_{(M,P)}$}

Fix a $\tau$-tilting pair $(M,P)$.
As is said in the title, this subsection is devoted to show the relation between the $c$-matrix $C_{(M,P)}$ defined in definition \ref{def:C-mat} and the matrix $X_{(M,P)}$ introduced in subsection \ref{ssc:bricks}. 
The explicit relation is given by the following theorem.

\begin{theorem}[Theorem \ref{th:c-matrix}]
Let $(M,P)$ be a $\tau$-tilting pair.
Then there exists a diagonal matrix $D$ which is invertible over $\mathbb{Z}$ and such that 
$$C_{(M,P)}= X_{(M,P)} D.$$
\end{theorem}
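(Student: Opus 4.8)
The plan is to show that $C_{(M,P)}$ and $X_{(M,P)}$ have, up to a diagonal rescaling, the same columns; since $C_{(M,P)} = (G_{(M,P)}^{-1})^T$, it is equivalent to show that the columns of $X_{(M,P)}$ are, up to nonzero scalars, the dual basis to the basis $\{g^{M_1},\dots,g^{M_k},-g^{P_{k+1}},\dots,-g^{P_n}\}$ of $\mathbb{R}^n$. Concretely, writing $h_r := g^{M_r}$ for $1\le r\le k$ and $h_r := -g^{P_r}$ for $k+1\le r\le n$ for the columns of $G_{(M,P)}$, I want to prove that
$$\langle h_s, [B_r] \rangle = 0 \quad \text{for all } s \neq r, \qquad \langle h_r, [B_r]\rangle \neq 0.$$
Indeed, if these relations hold, then $G_{(M,P)}^T X_{(M,P)}$ is a diagonal matrix $D'$ with nonzero entries, so $X_{(M,P)} = (G_{(M,P)}^T)^{-1} D' = C_{(M,P)} D'$, and setting $D := (D')^{-1}$ gives $C_{(M,P)} = X_{(M,P)} D$. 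One then checks separately that $D$ (equivalently $D'$) is invertible over $\mathbb{Z}$.

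First I would establish the off-diagonal vanishing. Recall $B_r$ is $\theta_r$-stable where $\theta_r = \sum_{i\neq r} g^{M_i} - \sum_{j\neq r} g^{P_j} = \sum_{s\neq r} h_s$, so in particular $\langle \theta_r, [B_r]\rangle = 0$ by Definition \ref{stability}. This gives one linear relation $\sum_{s\neq r}\langle h_s, [B_r]\rangle = 0$, but I need each term to vanish individually. The key input is Theorem \ref{formula}: for any module $N$, $\langle g^{M_s}, [N]\rangle = \dim_k\Hom_A(M_s,N) - \dim_k\Hom_A(N,\tau M_s)$, and similarly $\langle -g^{P_s}, [N]\rangle = -\dim_k\Hom_A(P_s, N)$ using the projective presentation $P_s \to 0$ of... more carefully, for a projective $P_s$, $g^{P_s} = [\,\text{top part}\,]$ and the formula specializes to $\langle g^{P_s}, [N]\rangle = \dim_k \Hom_A(P_s, N)$. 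Now since $(M,P)_r$ is a $\tau$-rigid pair whose associated algebra $\tilde{A}_{(M,P)_r}$ has $\mod A^{ss}_{\theta_r}$ as an equivalent category (Theorem \ref{stablemodcat}), and $B_r$ is $\theta_r$-stable hence a simple object of $\mod A^{ss}_{\theta_r}$, I would argue that every $M_i$ with $i\neq r$ and every $P_j$ with $j\neq r$ satisfies $\Hom_A(M_i, B_r) = 0 = \Hom_A(B_r, \tau M_i)$ and $\Hom_A(P_j, B_r) = 0$. The vanishing $\Hom_A(P_j, B_r) = 0$ is immediate: $B_r$ lies in the torsion class $\Fac M$ (being $\theta_r$-semistable with $\theta_r$ built from $(M,P)_r \subset (M,P)$), and $\Hom_A(P_j, \Fac M) = 0$ by $\tau$-rigidity of $(M,P)$. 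For the $M_i$ side, the point is that $B_r$ is $\theta_r$-semistable, $M_i$ generates a subcategory on the "positive" side of the wall $\theta_r$, and standard stability-theoretic arguments (as in \cite{BSTwc,BSTwc}) force $\Hom_A(M_i, B_r)$ and $\Hom_A(B_r, \tau M_i)$ to vanish — essentially because $\langle g^{M_i}, [B_r]\rangle$ must have constant sign across $i\neq r$ by sign considerations on the wall, and their sum is $0$.

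For the diagonal nonvanishing $\langle h_r, [B_r]\rangle \neq 0$: if this inner product were $0$, then $[B_r]$ would be orthogonal to all of $h_1,\dots,h_n$, which form a basis of $\mathbb{R}^n$ by Theorem \ref{basis}, forcing $[B_r] = 0$, a contradiction since $B_r$ is a nonzero module. Finally, to see $D$ is invertible over $\mathbb{Z}$: the matrix $G_{(M,P)}$ is invertible over $\mathbb{Z}$ (its columns form a $\mathbb{Z}$-basis of $\mathbb{Z}^n$ by Theorem \ref{basis}), and $X_{(M,P)}$ has integer entries (dimension vectors); from $D = G_{(M,P)}^T X_{(M,P)}$ we get that $D$ is an integer diagonal matrix, and from $X_{(M,P)} = C_{(M,P)} D = (G_{(M,P)}^T)^{-1} D$ together with $X_{(M,P)}$ integral and $G_{(M,P)}^T$ integral we get that $D^{-1}$ is also integral, so $D$ is a unit in $\mathrm{GL}_n(\mathbb{Z})$; being diagonal, its entries are $\pm 1$.

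\textbf{Main obstacle.} I expect the crux to be the off-diagonal vanishing, and specifically the claim that $\Hom_A(M_i, B_r) = 0$ and $\Hom_A(B_r,\tau M_i) = 0$ for $i \neq r$. The relation $\sum_{s\neq r}\langle h_s, [B_r]\rangle = 0$ only gives a single constraint, so the real work is a sign argument: one must show all the integers $\langle h_s,[B_r]\rangle$, $s\neq r$, are simultaneously $\ge 0$ (or all $\le 0$), after which summing to zero forces each to vanish. This is where the geometry of walls and the $\tau$-tilting reduction of Theorem \ref{stablemodcat} really enter — $B_r$ being a \emph{stable} (not just semistable) object of the reduced category $\mod\tilde{A}_{(M,P)_r}$, corresponding to a simple module, is what pins down the signs. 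Once the off-diagonal entries are handled, the diagonal nonvanishing and the integrality of $D$ are short.
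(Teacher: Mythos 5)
Your overall strategy is the same as the paper's: multiply $G_{(M,P)}^{T}$ against $X_{(M,P)}$, show the product is a diagonal matrix $D'$ with nonzero entries, and invert. The off-diagonal vanishing and the diagonal nonvanishing are handled essentially as in the paper (the paper gets the off-diagonal zeros from the fact that $B_r$ is $\theta_{\alpha(M,P)_r}$-stable for \emph{all} positive $\alpha$, equivalently from the $\Hom$-vanishings of \cite[Proposition 3.13]{BSTwc} that you invoke; your "sign considerations" sketch is recoverable from that reference together with the dichotomy $B_r\in\Fac M$ or $B_r\in M^{\perp}$, which is Proposition \ref{prop:SinFac} and its dual). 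Your basis argument for $\por{h_r}{[B_r]}\neq 0$ is fine.

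However, there is a genuine gap at the step you dismiss as "short": the claim that $D$ is invertible over $\mathbb{Z}$, i.e. that each diagonal entry $\lambda_r=\por{h_r}{[B_r]}$ equals $\pm 1$. Your deduction — from $D'=G_{(M,P)}^{T}X_{(M,P)}$ integral and $X_{(M,P)}=(G_{(M,P)}^{T})^{-1}D'$ integral, conclude $(D')^{-1}$ integral — does not follow. The second relation only says that $[B_r]=\lambda_r c_r$ with $c_r$ (the $r$-th column of the integer matrix $C_{(M,P)}$) an integer vector, i.e. that $\lambda_r$ divides every entry of $[B_r]$; this is perfectly consistent with $|\lambda_r|>1$ (e.g. $[B_r]=(2,0)$, $\lambda_r=2$, $c_r=(1,0)$), and $\det D'=\pm\det X_{(M,P)}$ gives no control either, since nothing forces $\det X_{(M,P)}=\pm 1$ a priori. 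This is precisely where the paper does its real work: Lemma \ref{lem:stables} shows that a positive $\lambda_r$ equals $\dim_k\Hom_A(M_r,B_r)$ (using $B_r\in\Fac M$ to kill the $\tau$-term in Theorem \ref{formula}) and then proves $\dim_k\Hom_A(M_r,B_r)=1$ by a pullback and Ext-projectivity argument exploiting that $B_r$ is the image under $\tau$-tilting reduction of the unique simple $\tilde{A}_{(M,P)_r}$-module; the case $\lambda_r<0$ is then reduced to the positive case for the other $\tau$-tilting pair completing $(M,P)_r$ via mutation, together with the dual-basis Lemma \ref{lem:minus}. Without an argument of this kind your proof only yields $C_{(M,P)}=X_{(M,P)}D$ with $D$ diagonal, invertible over $\mathbb{Q}$ but not necessarily over $\mathbb{Z}$, which is strictly weaker than the statement (and insufficient for the later identification $[B_r]=\pm c_r$).
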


Remark that the fact that $D$ is diagonal and invertible over $\mathbb{Z}$ implies that every in the diagonal is either $1$ or $-1$.
We start with the following proposition that will take care of the positive entries of $D$.

\begin{proposition}\label{prop:SinFac}
Let $(M,P)$ be a $\tau$-tilting pair, $(\emph{Fac} M, M^{\perp})$ be the torsion pair induced by $(M,P)$ and let $N$ be a $\theta_r$-semistable module. 
If $\por{\theta_{(M,P)}}{N} > 0$ then $N$ belongs to $\emph{Fac} M$.
\end{proposition}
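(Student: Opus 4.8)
The plan is to compare the torsion class $\Fac M$ with the structure of $\theta_r$-semistable modules coming from Theorem \ref{stablemodcat}, using the canonical short exact sequence with respect to the torsion pair $(\Fac M, M^\perp)$. Write $\theta_{(M,P)} := \sum_{i=1}^k g^{M_i} - \sum_{j=k+1}^n g^{P_j}$; by Theorem \ref{basis} this is a genuine vector and by Theorem \ref{formula} the sign of $\por{\theta_{(M,P)}}{X}$ detects membership in $\Fac M$ versus $M^\perp$ in the expected way: indeed, for $X \in \Fac M$ one has $\Hom_A(X, \tau M_i) = 0$ (as $M$ is $\tau$-rigid and $\Fac M$ is closed under quotients, so $\Hom_A(X,\tau M)=0$ by the standard $\tau$-rigidity argument), while $\Hom_A(P_j, X)=0$ only for the torsion-free part, etc. So the first step is to record precisely that $\por{\theta_{(M,P)}}{X} \geq 0$ with equality-behaviour controlled by the torsion pair, and in particular $\por{\theta_{(M,P)}}{X} > 0$ forces $tX \neq 0$ where $tX$ is the torsion submodule of $X$ with respect to $(\Fac M, M^\perp)$.

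Next I would take the $\theta_r$-semistable module $N$ and look at its canonical sequence $0 \to tN \to N \to N/tN \to 0$ with $tN \in \Fac M$ and $N/tN \in M^\perp$. Since $N$ is $\theta_r$-semistable, $\por{\theta_r}{[L]} \leq 0$ for every submodule $L$ and $\por{\theta_r}{[N]} = 0$; in particular $\por{\theta_r}{[tN]} \leq 0$, hence $\por{\theta_r}{[N/tN]} \geq 0$. The key point will be to relate $\theta_r$ and $\theta_{(M,P)}$: they differ only in the $r$-th summand, so $\theta_{(M,P)} = \theta_r + g^{M_r}$ (if $r \leq k$) or $\theta_{(M,P)} = \theta_r - g^{P_r}$ (if $r > k$). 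Using Theorem \ref{formula} again, $\por{g^{M_r}}{X}$ computes $\dim\Hom_A(M_r,X) - \dim\Hom_A(X,\tau M_r)$, and on a module in $M^\perp$ this is $\leq 0$, while $-\por{g^{P_j}}{X} = \dim\Hom_A(P_j,X) \geq 0$. Combining these with the semistability inequalities I want to squeeze $N/tN$ into $M^\perp$ and simultaneously derive that if $tN \subsetneq N$ the hypothesis $\por{\theta_{(M,P)}}{N} > 0$ is contradicted — forcing $tN = N$, i.e. $N \in \Fac M$.

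The main obstacle I expect is controlling the interaction between the two inner-product sign conditions on the torsion-free quotient $N/tN$: semistability gives $\por{\theta_r}{[N/tN]} \geq 0$, and I need the correction term ($g^{M_r}$ or $-g^{P_r}$) to behave so that $\por{\theta_{(M,P)}}{[N/tN]} \leq 0$, whence $\por{\theta_{(M,P)}}{N} = \por{\theta_{(M,P)}}{tN} + \por{\theta_{(M,P)}}{N/tN}$ together with $\por{\theta_{(M,P)}}{tN} \geq 0$ on $\Fac M$ does not immediately close the argument unless one also knows $\por{\theta_{(M,P)}}{N/tN}$ is strictly controlled. I would handle this by arguing that $N/tN$, being a submodule of... no — a quotient; instead I would pass to the fact that $tN$ is itself a submodule of $N$, apply $\theta_r$-semistability to $tN$, and play the equality case of Theorem \ref{stablemodcat}'s description (where $\theta_{\alpha(M,P)_r}$-semistable modules are identified with modules over $\tilde A_{(M,P)_r}$) against the filtration. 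The cleanest route is probably: show $N/tN$ is $\theta_r$-semistable as well (quotient of a semistable by a semistable subobject inside the abelian category $\mod A^{ss}_{\theta_r}$, provided $tN$ is a subobject there), so $\por{\theta_r}{[N/tN]} = 0$; then $\por{\theta_{(M,P)}}{N/tN}$ equals $\pm\por{g^{M_r \text{ or } P_r}}{N/tN} \leq 0$ since $N/tN \in M^\perp$; hence $\por{\theta_{(M,P)}}{N} \leq \por{\theta_{(M,P)}}{tN}$, and if this is to be $>0$ we at least need $tN \neq 0$, and a short further argument (e.g. $\Fac M$ closed under quotients plus $N/tN \in M^\perp$ forcing the extension to split off, or simply re-running semistability) upgrades $tN \neq 0$ to $tN = N$.
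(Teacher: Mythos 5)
Your sign analysis is sound as far as it goes: for $X\in\Fac M$ one has $\Hom_A(X,\tau M)=0=\Hom_A(P,X)$, hence $\por{\theta_{(M,P)}}{[X]}=\sum_i\dim_k\Hom_A(M_i,X)\geq 0$, while for $X\in M^{\perp}$ one gets $\por{\theta_{(M,P)}}{[X]}\leq 0$; so the canonical sequence $0\to tN\to N\to N/tN\to 0$ yields $\por{\theta_{(M,P)}}{[N]}\leq\por{\theta_{(M,P)}}{[tN]}$ and the hypothesis forces $tN\neq 0$. But the step you flag at the end --- upgrading $tN\neq 0$ to $tN=N$ --- is a genuine gap, and neither repair you propose works: there is no reason for the canonical sequence to split (and even a splitting would not force $N/tN=0$), and ``re-running semistability'' only reproduces the inequalities you already have. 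The missing idea is the uniqueness of the $\theta_r$-stable module: since $(M,P)_r$ is almost $\tau$-tilting, Theorem \ref{stablemodcat} gives exactly one simple object $B_r$ in $\mod A^{ss}_{\theta_r}$, so every nonzero $\theta_r$-semistable module is an iterated extension of copies of $B_r$ and its dimension vector is a positive multiple of $[B_r]$. Hence $\por{\theta_{(M,P)}}{-}$ has one and the same sign on all nonzero $\theta_r$-semistable modules. Your hypothesis then gives $\por{\theta_{(M,P)}}{[B_r]}>0$; since $tN$ and $N/tN$ are again $\theta_r$-semistable (because $\por{\theta_r}{[tN]}=0$, as you can check from $tN\in\Fac M$) and $N/tN\in M^{\perp}$ forces $\por{\theta_{(M,P)}}{[N/tN]}\leq 0$, the quotient $N/tN$ must vanish. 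With that inserted, your argument closes.

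Note that this is a genuinely different route from the paper's, which is much shorter: it takes the explicit $\theta_r$-semistable module $N_r$ of Proposition \ref{prop:generator}, observes that $N_r$ is a generator of $\mod A^{ss}_{\theta_r}$ by Jasso's $\tau$-tilting reduction and lies in $\Fac M$ by construction (it is the cokernel of an $\mathrm{add}\,M$-approximation), and concludes that every $\theta_r$-semistable module lies in $\Fac M$ because $\Fac M$ is a torsion class. Both arguments ultimately rest on the same structural fact --- that $\mod A^{ss}_{\theta_r}$ is controlled by a single object --- but the paper exploits it through the projective generator $N_r$, whereas your route would exploit it through the unique simple $B_r$ together with the torsion decomposition.
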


\begin{proof}
Let $N_r$ the $\theta_r$-semistable module of proposition \ref{prop:generator}.
It follows from $N_{r}$ is a generator of $\mod A^{ss}_{\theta_{(M,P)_{r}}}$ by \cite[Theorem 3.15]{Jasso2015}.
Moreover \cite[Proposition 3.17]{BSTwc} implies that $N_{r}$ belongs to $\Fac M$. 
Therefore any other $\theta_{r}$-semistable module $N$ belong to $\Fac M$ because $\Fac M$ is a torsion class.
\end{proof}

The following lemma is key to show that $c$-vectors are the dimension vectors of certain modules.

\begin{lemma}\label{lem:stables}
Let $(M,P)$ be a $\tau$-tilting pair, $B_r \in \mathcal{B}_{(M,P)}$ be as defined in subsection \ref{ssc:bricks} and  
$$\theta_{(M,P)}:= \sum_{i=1}^{k} g^{M_{i}} - \sum_{j=k+1}^{t} g^{P_{j}}.$$  
If $$\langle \theta_{(M,P)} , [B_{r}]\rangle > 0$$, then $$\langle \theta_{(M,P)} , [B_{r}]\rangle = 1.$$
\end{lemma}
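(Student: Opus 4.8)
The key is to understand what $\langle \theta_{(M,P)}, [B_r]\rangle$ computes. Recall that $\theta_{(M,P)} = \sum_{i=1}^k g^{M_i} - \sum_{j=k+1}^n g^{P_j}$, which is (up to the sign convention built into the $g$-matrix) the sum of the columns of $G_{(M,P)}$; equivalently, $\theta_{(M,P)}$ is the vector whose coordinates in the $g$-vector basis of Theorem \ref{basis} are all $1$. The plan is to first invoke Proposition \ref{prop:SinFac}: if $\langle \theta_{(M,P)}, [B_r]\rangle > 0$ then $B_r \in \Fac M$. So from now on $B_r$ is a brick lying in the torsion class $\T := \Fac M$.

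**Reduction to a subcategory computation.** Next I would use Theorem \ref{formula} to rewrite the pairing homologically. Write $\theta_{(M,P)} = \sum_{i=1}^k g^{M_i} - \sum_{j=k+1}^n g^{P_j}$, so
$$\langle \theta_{(M,P)}, [B_r]\rangle = \sum_{i=1}^k \langle g^{M_i}, [B_r]\rangle - \sum_{j=k+1}^n \langle g^{P_j}, [B_r]\rangle.$$
Since $B_r \in \Fac M = {}^{\perp}(\tau M) \cap (\text{stuff})$ — more precisely, since $(M,P)$ is $\tau$-tilting, every module in $\Fac M$ satisfies $\Hom_A(B_r, \tau M_i) = 0$ and $\Hom_A(P_j, B_r)=0$ — the second term in Theorem \ref{formula}'s formula vanishes for each summand, and $\langle g^{P_j}, [B_r]\rangle = \dim_k \Hom_A(P_j, B_r) - \dim_k\Hom_A(B_r, \tau P_j) = 0 - 0 = 0$ as well (using $P_j$ projective so $\tau P_j = 0$). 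Hence
$$\langle \theta_{(M,P)}, [B_r]\rangle = \sum_{i=1}^k \dim_k \Hom_A(M_i, B_r) = \dim_k \Hom_A(M, B_r).$$
So the claim becomes: \emph{if $\dim_k\Hom_A(M,B_r) > 0$ then it equals $1$.}

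**The heart of the argument.** To pin this down I would use the equivalence $F \colon \mod A^{ss}_{\theta_r} \to \mod \tilde A_{(M,P)_r}$ from Theorem \ref{stablemodcat}, where $(M,P)_r$ is the almost $\tau$-tilting pair obtained by deleting the $r$-th summand. The brick $B_r$ is $\theta_r$-stable, hence corresponds to a \emph{simple} object of $\mod A^{ss}_{\theta_r}$. Now $\Hom_A(M, B_r)$ should be computed inside the abelian category $\mod A^{ss}_{\theta_r}$: the modules $M_i$ for $i \neq r$ are, via Proposition \ref{prop:generator} and the $\tau$-tilting reduction picture, the "projective-like" generators, and the count $\dim_k\Hom_A(M,B_r)$ reduces to counting hom into a simple object from a (sum of) generator(s) of the reduced category. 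Concretely, pairing $\langle \theta_{(M,P)}, -\rangle$ restricted to $\mod A^{ss}_{\theta_r}$ should transport under $F$ to a functional on $\mod\tilde A_{(M,P)_r}$ that counts composition length or the multiplicity of a single simple; since $B_r$ maps to a simple object, that multiplicity is $0$ or $1$. I would make this precise by showing $\theta_{(M,P)} - \theta_r = g^{M_r}$ (if $1 \le r \le k$) or $-g^{P_r}$ (if $k+1 \le r \le n$), so that on $\theta_r$-semistables, where $\langle \theta_r, -\rangle = 0$, we get $\langle \theta_{(M,P)}, [B_r]\rangle = \langle g^{M_r}, [B_r]\rangle = \dim_k\Hom_A(M_r, B_r)$ (the $\tau$-term again vanishing on $\Fac M$). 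Finally $\Hom_A(M_r, B_r)$, with $B_r$ a brick: one shows any nonzero map $M_r \to B_r$ is surjective (its image is a nonzero submodule of the brick $B_r$ generated inside $\Fac M$, and $\theta_r$-stability forces the image to be all of $B_r$), and then two such maps differ by an element of the division ring $\End_A(B_r)$ acting on a fixed one; combined with the reduction that $B_r$ corresponds to a simple $\tilde A_{(M,P)_r}$-module whose "multiplicity in $M_r$" is at most one, this gives $\dim_k\Hom_A(M_r,B_r) \le 1$, hence $= 1$ when positive.

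**Expected main obstacle.** The routine parts — the vanishing of $\tau$-terms on $\Fac M$ and the identity $\theta_{(M,P)} - \theta_r = \pm g^{(\cdot)_r}$ — are mechanical. The genuine difficulty is the last step: cleanly justifying that $\dim_k\Hom_A(M_r, B_r) \le 1$, i.e., that $B_r$ "appears with multiplicity one" relative to the deleted summand. This requires carefully tracking what the equivalence $F$ of Theorem \ref{stablemodcat} does to the module $M_r$ and to the pairing $\langle g^{M_r}, -\rangle$, and identifying $B_r$ with a simple in the reduced category in a way compatible with that pairing. I expect the argument to hinge on Proposition \ref{prop:generator}: $B_r$ is the cokernel of a right $\operatorname{add} M$-approximation, and unwinding that construction should show directly that a basis of $\Hom_A(M, B_r)$ has size one.
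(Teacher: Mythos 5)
Your reduction matches the paper's own first half almost exactly: using proposition \ref{prop:SinFac} to place $B_r$ in $\Fac M$, the Auslander--Reiten formula of theorem \ref{formula}, and the identity $\theta_{(M,P)}=\theta_r+g^{M_r}$ (resp. $\theta_r-g^{P_r}$), both you and the paper rule out $k+1\le r\le n$ (where the pairing is $-\dim_k\Hom_A(P_r,B_r)\le 0$) and reduce the lemma to the single claim $\dim_k\Hom_A(M_r,B_r)=1$. Everything up to that point is correct.

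The gap is the inequality $\dim_k\Hom_A(M_r,B_r)\le 1$, which you explicitly defer, and the mechanism you sketch for it does not work. Even granting that every nonzero map $M_r\to B_r$ is surjective, two surjections from an indecomposable module onto a brick need not differ by an element of the division ring $\End_A(B_r)$ --- they can have different kernels --- so brickness of $B_r$ alone does not bound the dimension of the Hom-space. This step is the real content of the lemma, and the paper's argument for it is substantial: it takes the canonical sequence $0\to tM_r\to M_r\to N_r\to 0$ for the torsion pair of the almost pair $(M,P)_r$, identifies $N_r$ with a finite direct sum of copies of $G(\tilde{A}_{(M,P)_r})$ via theorem \ref{stablemodcat} and Jasso's reduction, forms the pullbacks $E_j$ of $M_r\to N_r$ along the inclusions of the summands, applies the snake lemma to obtain an epimorphism $\bigoplus_j E_j\to M_r$, uses Ext-projectivity of $M_r$ in $\Fac M$ to split that epimorphism and realise $M_r$ as a direct summand of some $E_j$, and only then gets $\dim_k\Hom_A(M_r,B_r)\le\dim_k\Hom_A(E_j,B_r)=\dim_k\Hom_{\tilde{A}_{(M,P)_r}}(\tilde{A}_{(M,P)_r},S)=1$, because $B_r$ corresponds to the unique simple $\tilde{A}_{(M,P)_r}$-module. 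Your instinct that the bound must come from identifying $B_r$ with that simple is right, but the splitting argument that transports the count of $\Hom_A(M_r,B_r)$ into the reduced category is missing, so the proposal as written is incomplete.
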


\begin{proof}
Let $B_r \in \mathcal{B}_{(M,P)}$. 
As a first remark, one can see that theorem \ref{stablemodcat} implies that $\langle \theta_{(M,P)} , [B_r] \rangle \neq 0$ because $(M,P)$ is a $\tau$-tilting module. 

A second easy remark is that $\theta_{(M,P)}=\theta_{(M,P)_r} + g^{M_r}$ if $1 \leq r \leq k$ or $\theta_{(M,P)}=\theta_{(M,P)_r} + g^{P_r}$ if $k+1 \leq r \leq n$.

Suppose that $k+1 \leq r \leq n$.
Then the linearity of the inner product implies that 
	\begin{align*}
		\langle \theta_{(M,P)} , [B_{r}] \rangle 	&= \left\langle \theta_r - g^{P_{r}},  [B_{r}] \right\rangle \\
							&= \langle \theta_r, [B_r] \rangle + \langle -g^{P_r} , [B_r] \rangle
	\end{align*}
By construction we have that $B_r$ is $\theta_r$-stable. 
This implies in particular that $\langle \theta_r , [B_r] \rangle = 0$.
Hence 
\begin{equation*}
\por{\theta_{(M,P)}}{[B_r]} = \por{-g^{P_r}}{[B_r]} =  -\dim_{k}(\Hom_{A}(P_{r}, B_{r}))\leq 0,
\end{equation*}
a contradiction with our hypothesis. 
Then we have that $1\leq r \leq k$.
In that case 
\begin{equation*}
\por{\theta_{(M,P)}}{[B_r]} = \por{g^{M_r}}{[B_r]} =  \dim_{k}(\Hom_{A}(P_{r}, B_{r})) - \dim_k(\Hom_A (B_r, \tau M_r) ).
\end{equation*}

Since $B_r$ is $\theta_r$-stable we have that $B_r \in \Fac M$ by proposition \ref{prop:SinFac}.
Hence $$\Hom_{A}( B_{r} , \tau_{A} M_{r})=0.$$ 
So,
 	\begin{align*}
 		\theta_{(M,P)}([B_{r}])	&= \dim_{k}(\Hom_{A}(M_{r}, B_{r})).
 	\end{align*}
Therefore, is enough to show that $\dim_{k}(\Hom_{A}(M_{r}, B_{r}))=1$ to complete the proof.

Let $(M,P)_r$ be the almost $\tau$-tilting pair as in subsection \ref{ssc:bricks}, 
$$( \T_r, \F_r ) = \left( \Fac \left(\bigoplus_{i\neq r} M_i \right), \left(\bigoplus_{i\neq r} M_i \right)^{\perp} \right)$$ 
the torsion pair associated to it and let
$$0\to tM_r \overset{l}\to M_r \overset{p}\to M_r/tM_r\to 0$$
be the canonical short exact sequence with respect to $(\T_r, \F_r)$.
	
We have that $N_r=M_r/tM_r$ by \cite[Lemma 2.3]{BSTwc}. 
Hence $N_r$ is an Ext-projective module in $\mod A^{ss}_{\theta_{(M,P)_{r}}}$ by \cite[Theorem 3.15]{Jasso2015}. 
Therefore Theorem \ref{stablemodcat} implies that $N_r=\bigoplus\limits_{i=1}\limits^{t}(G(\tilde{A}_{(M,P)_r}))$ for some natural number $t$. Consider, for every $1\leq j\leq t$, the following commutative diagram 
 $$
 \xymatrix{
 		0\ar[r]	&tM_r\ar[r]\ar@{=}[d]	&E_j\ar^{q_{j}}[r]\ar^{f_j}[d]	&G(\tilde{A}_{(M,P)_r})\ar[r]\ar^{\iota_j}[d]	&0 	\\
		0\ar[r]	&tM_r\ar^{l}[r]			&M_r\ar^{p}[r]				&N_r\ar[r]				&0}
 $$
where $E_{j}$ is the pullback of $p$ and the canonical monomorphism $\iota_{j}$ for every $j$. 
Then $E_j\in\Fac M$ for every $1\leq j\leq t$ because $\Fac M$ is closed under extensions. Consequently, so does $\bigoplus_{j=1}^t E_j$.
 
Consider the morphisms 
$$f=[f_1, \dots, f_t]:\bigoplus_{j=1}^t E_j \to M_r ,$$  
$$q=\bigoplus_{j=1}^{t} q_{j}:\bigoplus_{j=1}^t E_j \to \bigoplus_{j=1}^{t} G(\tilde{A}_{(M,P)_r}),$$ 
$$id^{t}=[id, \dots, id]: \bigoplus_{j=1}^{t} tM_{r} \to tM_{r}$$
and 
$$\iota=[\iota_{1}, \dots, \iota_{t}]: \bigoplus_{j=1}^{t} G(\tilde{A}_{(M,P)_r}) \to N_{r}.$$ 
Then we construct the following commutative diagram.
$$
 \xymatrix{
 		0\ar[r]	&\bigoplus_{j=1}^{t} (tM_r)\ar[r]\ar^{id^{t}}[d]	&\bigoplus_{j=1}^tE_j\ar^{q}[r]\ar^{f}[d]	&\bigoplus_{i=1}^{t} G(\tilde{A}_{(M,P)_r})\ar[r]\ar^{\iota}[d]		&0 	\\
		0\ar[r]	&tM_r\ar^{l}[r]							&M_r\ar^{p}[r]						&N_r\ar[r]								&0}
 $$
Note that, by hypothesis, $\iota$ is an isomorphism and, by construction, $id^{t}$ is an epimorphism. 
Therefore, we can apply the snake lemma to show that $f$ is an epimorphism. 
Since $M_r$ is an Ext-projective module for $\Fac M$ and $\bigoplus_{j=1}^t E_j\in\Fac M$, we deduce that $f$ must split. 
So, $M_{r}$ is an indecomposable direct factor of $\bigoplus_{j=1}^t E_j\in\Fac M$. 
Then, $M_{r}$ is a direct factor of some $E_{j}$.

On the other hand, we know that $\dim_{k}(\Hom_{A}(M_{r},B_{r})) \neq 0$ and 
	\begin{align*}
		\dim_{k}(\Hom_{A}(M_{r}, B_{r})) 	&	\leq \dim_{k}(\Hom_{A}(E_{j}, B_{r}))		\\
									&	= \dim_{k}(\Hom_{A}(G(\tilde{A}_{(M,P)_r}), B_{r})).		
	\end{align*}
Moreover, $B_{r}$ is, by hypothesis, the image over the functor $G : \mod \tilde{A}_{(M,P)_r} \to \mod A$ of a representative of the unique isomorphism class of the simple $\tilde{A}_{(M,P)_r}$-modules. 
Therefore 
$$\dim_{k}(\Hom_{A}(M_{r}, B_{r})) \leq \dim_{k}(\Hom_{\tilde{A}_{(M,P)_r}}(\tilde{A}_{(M,P)_r},S))=1$$ 
Then we can conclude that $\dim_{A}(\Hom_{A}(M_{r},B_{r}))=1$ as claimed.
\end{proof}

Now we need to consider the cases of bricks such that $\por{\theta_r}{[B_r]}<0$.
However, in this case the inequality could come from the fact that either $\Hom_A(B_r, \tau M_r) \neq 0$ or $\Hom_A(P_r, M_r) \neq 0$. 
Hence, using dual-like arguments is not enough.
Instead, we will use some basic linear algebra. 

Let consider a basis $B=\{v_1, \dots, v_n\}$ of $\mathbb{Z}^n$ and let $B^*=\{v_1^*, \dots, v_n^*\}$ be its dual basis, i.e., a set of linearly independent vectors in $\mathbb{Z}^n$ such that $\por{v_i}{v_j^*}=1$ if $i=j$ and $\por{v_i}{v_j^*}=0$ otherwise. 
Now consider a new basis $B'=\{v'_1, \dots, v'_n\}$ of $\mathbb{Z}^n$ such that such that $v_1 \neq v'_1$ and $v_i=v'_i$ for all $2\leq i\leq n$.
In general the dual basis $(B')^*$ of $B'$ can be quite different of the dual basis $B^*$ of $B$. 
However something can said, as shown in the following result. 
We include the proof for the convenience of the reader.

\begin{lemma}\label{lem:minus}
Let $B=\{v_1, \dots, v_n\}$ and $B'=\{v'_1, \dots, v'_n\}$ two basis of $\mathbb{Z}^n$ as in the previous paragraph having dual basis $B^*=\{v^*_1, \dots, v^*_n\}$ and $(B')^*=\{(v'_1)^*, \dots, (v'_n)^*\}$, respectively.
Then $\por{v_1}{(v'_1)^*}=-1$. In particular, $(v'_1)^*=-v_1^*$.
\end{lemma}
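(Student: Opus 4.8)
The plan is to exploit the fact that $B$ and $B'$ differ only in their first vector, so the change-of-basis matrix between them is a single elementary-type column operation, and then track what this does to the dual bases. First I would write $v'_1$ in terms of the basis $B$: since $B$ is a basis of $\mathbb{Z}^n$, we have $v'_1 = \sum_{i=1}^n a_i v_i$ with $a_i \in \mathbb{Z}$. Pairing with $v_j^*$ gives $a_j = \por{v'_1}{v_j^*}$ for each $j$, and in particular $a_1 = \por{v'_1}{v_1^*}$.

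Next I would argue that $a_1 = -1$. The key point is that $B' = \{v'_1, v_2, \dots, v_n\}$ must also be a $\mathbb{Z}$-basis; equivalently the transition matrix from $B$ to $B'$ — which is the identity with its first column replaced by $(a_1, a_2, \dots, a_n)^T$ — must be invertible over $\mathbb{Z}$, hence have determinant $\pm 1$. Expanding that determinant along the first column (all other columns being standard basis vectors) shows the determinant equals $a_1$, so $a_1 = \pm 1$. To rule out $a_1 = +1$: if $a_1 = 1$ then $v'_1 - v_1 = \sum_{i=2}^n a_i v_i$ lies in the span of $\{v_2, \dots, v_n\}$, so $\{v'_1\} \cup \{v_2,\dots,v_n\}$ spans the same lattice as $\{v_1\}\cup\{v_2,\dots,v_n\}$ — that is fine, it is still a basis — so this alone does not give a contradiction. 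I need the hypothesis $v_1 \neq v'_1$ together with the specific way the dual basis is set up. Actually the cleaner route: compute $\por{v'_1}{(v'_1)^*}$ two ways. On one hand it equals $1$ by definition of the dual basis $(B')^*$. On the other hand, since $v_i = v'_i$ for $i \geq 2$, the vectors $v_2, \dots, v_n$ annihilate $(v'_1)^*$ (because $\por{v'_i}{(v'_1)^*} = 0$ for $i \geq 2$), so $(v'_1)^*$ is orthogonal to $\mathrm{span}_{\mathbb{Q}}(v_2,\dots,v_n)$, which is exactly the line spanned by $v_1^*$. Hence $(v'_1)^* = \lambda v_1^*$ for some $\lambda \in \mathbb{Q}$, and integrality of both forces $\lambda \in \mathbb{Z}$ with $\lambda = \pm 1$ by a primitivity argument on $v_1^*$ (being a member of a $\mathbb{Z}$-basis, $v_1^*$ is primitive).

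Then $1 = \por{v'_1}{(v'_1)^*} = \lambda \por{v'_1}{v_1^*} = \lambda a_1$, so $\lambda = a_1 \in \{1, -1\}$, and I must exclude $\lambda = a_1 = 1$. If $\lambda = 1$ then $(v'_1)^* = v_1^*$, meaning $v_1^*$ pairs to $1$ with $v'_1$ and to $0$ with $v_2, \dots, v_n$; but $v_1^*$ also pairs to $1$ with $v_1$ and to $0$ with $v_2,\dots,v_n$, so $v_1^*$ pairs identically with $v_1$ and with $v'_1$ across all elements of the basis $B$ — wait, that only pins down $v_1 - v'_1$ up to the kernel of $v_1^*$. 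The decisive step is instead to use that $B'$ is a basis: $v'_1 = a_1 v_1 + \sum_{i\geq 2} a_i v_i$ with $a_1 = 1$ would make $\{v'_1, v_2,\dots,v_n\}$ a basis with transition determinant $+1$, which is permitted, so I genuinely need one more input. The intended resolution is surely that in the application $v_1, v'_1$ are two \emph{distinct} mutation-related $g$-vectors and the mutation rule forces the coefficient to be negative; abstractly, the hypothesis as literally stated needs $a_1 \ne 1$, which follows from $v_1 \ne v'_1$ only if we also know, say, that $v'_1 \notin B$-span configurations are excluded. I expect the paper resolves this by additionally invoking that $B'$ arises by a genuine exchange so $a_i \le 0$ for all $i$ (sign-coherence of the exchange), whence $a_1 = -1$; I would cite that structural fact. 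The main obstacle is precisely this sign determination — everything else is the orthogonality computation showing $(v'_1)^*$ is a scalar multiple of $v_1^*$, which is routine linear algebra over $\mathbb{Z}$.

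Concretely, the steps in order: (1) expand $v'_1$ in the basis $B$ and identify the first coordinate as $\por{v'_1}{v_1^*}$; (2) show $(v'_1)^*$ is orthogonal to $v_2, \dots, v_n$, hence a $\mathbb{Q}$-multiple of $v_1^*$; (3) use integrality and primitivity of $v_1^*$ to get $(v'_1)^* = \pm v_1^*$; (4) pair $v'_1$ with $(v'_1)^*$ to tie the two signs together and reduce to showing a single coefficient is $-1$ rather than $+1$; (5) conclude the sign via the determinant/exchange argument and deduce $\por{v_1}{(v'_1)^*} = \pm\por{v_1}{v_1^*} = \pm 1$, with the sign being $-1$. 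I would present (2)–(4) carefully and treat (5) as the crux.
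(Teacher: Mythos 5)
Your steps (1)--(4) are essentially the paper's own argument: expand $(v'_1)^*$ in the basis $B^*$, observe that orthogonality to $v_2=v'_2,\dots,v_n=v'_n$ kills every coefficient except the first, so $(v'_1)^*=\por{v_1}{(v'_1)^*}\,v_1^*$, and symmetrically $v_1^*=\por{v'_1}{v_1^*}\,(v'_1)^*$; combining the two relations shows the product of the two pairings equals $1$, hence each is a unit of $\mathbb{Z}$, i.e.\ $\pm 1$. Up to that point you and the paper coincide exactly.

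Your hesitation at step (5) is justified, and you have in fact spotted a real problem rather than missed an argument: the lemma is false as literally stated. Take $n=2$, $B=\{e_1,e_2\}$ and $B'=\{e_1+e_2,\,e_2\}$; then $v_1\neq v'_1$ and $v_2=v'_2$, both are $\mathbb{Z}$-bases, yet $(v'_1)^*=e_1=v_1^*$, so $\por{v_1}{(v'_1)^*}=+1$. The paper's proof settles the sign with the assertion that since $B$ and $B'$ are different bases one has $\por{v_1}{(v'_1)^*}\neq 1$; this is exactly the unjustified implication, because $\por{v_1}{(v'_1)^*}=1$ only forces $(v'_1)^*=v_1^*$, not $v'_1=v_1$. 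Your proposed repair --- import the sign from the context in which the lemma is applied --- is the correct one: in the proof of Theorem \ref{th:c-matrix} the lemma is invoked only in the case $\lambda_r=\por{g^r}{[B_r]}<0$, so the pairing is already known to be negative there and only the ``$\pm 1$'' conclusion of the lemma is genuinely needed. In short, your proposal establishes everything that is provable from the stated hypotheses and correctly identifies the missing input; what needs amending is the statement (and the paper's sign step), not your argument.
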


\begin{proof}
Given that $B^*$ is a basis of $\mathbb{Z}^n$, we have that $(v'_1)$ can be written as a linear combination of the elements of $B^*$.
Moreover, given that $B^*$ is a dual basis of $B$, this linear combination has the following form.
$$(v'_1)^* = \sum_{i=1}^n \por{v_i}{(v'_1)^*} v_i^* $$
By hypothesis, we have that $\por{v_i}{(v'_1)^*}=0 $ if $i\neq 1$.
Therefore 
$$(v'_1)^* = \por{v_1}{(v'_1)^*} v_i^*.$$
Using identical arguments one can prove that 
$$v_1^* = \por{v'_1}{v_1^*} (v'_i)^*.$$
Hence 
$$v_1^* = \por{v'_1}{v_1^*} (v'_i)^* = \por{v'_1}{v_1^*}\por{v_1}{(v'_1)^*} v_i^*,$$
implying that is invertible in $\mathbb{Z}$.
Since $B$ and $B'$ are different basis, we have that $\por{v_1}{(v'_1)^*} \neq 1$.
So, the equality $\por{v_1}{(v'_1)^*}=-1$ must hold.
This implies that 
$$(v'_1)^* = - v_i^*,$$
as claimed.
This finishes the proof.
\end{proof}

Now we are able to prove the main theorem of this subsection, which generalises \cite[Lemma 5.14 and 5.15]{Gross2017} and \cite[Theorem 4.3.1]{Igusa2009}.  

\begin{theorem}\label{th:c-matrix}
Let $(M,P)$ be a $\tau$-tilting pair.
Then there exists a diagonal matrix $D$ which is invertible and such that 
$$C_{(M,P)}=  X_{(M,P)} D.$$
\end{theorem}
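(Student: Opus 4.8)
The strategy is to exhibit the columns of $C_{(M,P)}$ as (signed) columns of $X_{(M,P)}$ by using the characterisation of $C_{(M,P)}$ via duality: since $C_{(M,P)} = (G_{(M,P)}^{-1})^T$, the $r$-th column $c_r$ of $C_{(M,P)}$ is precisely the $r$-th element of the dual basis of the basis $\{g^{M_1},\dots,g^{M_k},-g^{P_{k+1}},\dots,-g^{P_n}\}$ of $\mathbb{Z}^n$ furnished by Theorem \ref{basis}. Thus it suffices to prove that for each $r$, the dimension vector $[B_r]$ of the brick $B_r\in\mathcal{B}_{(M,P)}$ satisfies $\por{g}{[B_r]} = \pm\delta$ against every element $g$ of that basis, with the sign being constant in $r$ and independent of which basis element we pair against; that constant sign is then the $r$-th diagonal entry of $D$.

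First I would record, for fixed $r$ and any $s\neq r$, that $\por{\theta_s'}{[B_r]}=0$ where $\theta_s'$ denotes the $s$-th basis element ($g^{M_s}$ or $-g^{P_s}$): indeed $B_r$ is $\theta_r$-stable, hence $\theta_r$-semistable, and $\theta_r=\sum_{s\neq r}\theta_s'$ is a sum of the remaining basis vectors; one shows each individual pairing $\por{\theta_s'}{[B_r]}$ vanishes. For a summand of $M$ this is $\dim_k\Hom_A(M_s,B_r)-\dim_k\Hom_A(B_r,\tau M_s)$ by Theorem \ref{formula}, and for a summand of $P$ it is $-\dim_k\Hom_A(P_s,B_r)$; using that $B_r\in\mathop{\rm mod}A^{ss}_{\theta_r}$ together with Proposition \ref{prop:SinFac} (so $B_r\in\Fac M$, killing the $\tau$-terms) and the sign constraints coming from semistability, one deduces all these pairings are $0$ since they sum to $0$ and each has a definite sign. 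Next I would handle the single remaining pairing $\por{\theta_r'}{[B_r]}$ against the $r$-th basis vector itself. If it is positive, Lemma \ref{lem:stables} forces it to equal $1$, so $[B_r]$ is exactly the dual basis vector $c_r$ and the $r$-th diagonal entry of $D$ is $+1$. If it is negative, I invoke Lemma \ref{lem:minus}: the orthogonality relations established above say $[B_r]$ is, up to scalar, the $r$-th dual basis vector, and comparing the basis $\{g^{M_1},\dots,-g^{P_n}\}$ with the modified basis obtained by replacing its $r$-th vector (which differs only in one slot, because $B_r$ arises exactly from the almost-$\tau$-tilting pair $(M,P)_r$ and the two completions of it) shows the scalar must be $-1$; hence $[B_r]=-c_r$ and the $r$-th diagonal entry of $D$ is $-1$.

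Assembling this over all $r$: each column $c_r$ of $C_{(M,P)}$ equals $d_r[B_r]$ with $d_r\in\{+1,-1\}$, which is exactly the matrix identity $C_{(M,P)}=X_{(M,P)}D$ with $D=\mathop{\rm diag}(d_1,\dots,d_n)$; this $D$ is manifestly diagonal and invertible over $\mathbb{Z}$, and in passing this shows $X_{(M,P)}$ itself is invertible. The main obstacle I anticipate is the negative case: establishing cleanly that the relevant two bases differ in precisely one vector — so that Lemma \ref{lem:minus} applies — requires pinning down that $B_r$ is determined by $(M,P)_r$ via Theorem \ref{stablemodcat} and that the $g$-vectors of the two $\tau$-tilting completions of $(M,P)_r$ give exactly the two candidate replacements, one of which is the $r$-th column of the original $G$-matrix. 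Making the bookkeeping of signs in the orthogonality step airtight (ensuring each pairing genuinely has a forced sign, not merely that the sum vanishes) is the other delicate point, but it follows from $\theta_r$-semistability of $B_r$ combined with $B_r\in\Fac M$.
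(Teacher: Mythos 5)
Your overall architecture is the same as the paper's: compute $G_{(M,P)}^{T}X_{(M,P)}$, show it is diagonal, use Lemma \ref{lem:stables} to force the positive diagonal entries to equal $1$, and use the second $\tau$-tilting completion of $(M,P)_r$ together with Lemma \ref{lem:minus} to force the negative entries to equal $-1$. The genuine problem is your justification of the off-diagonal vanishing $\por{g^{i}}{[B_r]}=0$ for $i\neq r$. You derive it from ``$B_r\in\Fac M$, killing the $\tau$-terms'' plus a sign-cancellation argument, citing Proposition \ref{prop:SinFac}. But that proposition only applies under the hypothesis $\por{\theta_{(M,P)}}{[B_r]}>0$; for the bricks giving the \emph{negative} columns of $C_{(M,P)}$ one has $B_r\in M^{\perp}$ rather than $B_r\in\Fac M$ (this is exactly the dichotomy recorded later in the paper), so the terms $\dim_k\Hom_A(B_r,\tau M_s)$ need not vanish and the pairings $\por{g^{M_s}}{[B_r]}$ have no definite sign. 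Since your treatment of the negative case relies on these same orthogonality relations (you need $[B_r]$ to be a scalar multiple of the $r$-th dual basis vector before Lemma \ref{lem:minus} can be invoked), the argument as written does not close.

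There are two ways to repair it. First, one can invoke \cite[Proposition 3.13]{BSTwc}, as the paper implicitly does: any $\theta_r$-semistable module lies in $\bigl(\bigoplus_{i\neq r}M_i\bigr)^{\perp}\cap{}^{\perp}\bigl(\bigoplus_{i\neq r}\tau M_i\bigr)\cap\bigl(\bigoplus_{j\neq r}P_j\bigr)^{\perp}$, so every Hom-space entering $\por{g^{i}}{[B_r]}$ for $i\neq r$ vanishes individually and no sign bookkeeping is needed. Alternatively, in the negative case you can run your sign argument relative to the other completion $(\tilde M,\tilde P)$ of $(M,P)_r$, for which $B_r$ does lie in $\Fac\tilde M$ and whose $g$-vectors agree with the original ones away from position $r$. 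A smaller slip in the same step: even when $B_r\in\Fac M$, the pairings against $g^{M_s}$ are $\geq 0$ while those against $-g^{P_s}$ are $\leq 0$, so ``they sum to zero and each has a definite sign'' is not yet a contradiction-free cancellation; you additionally need that $\Hom_A(P_s,-)$ vanishes on $\Fac M$ (because $P_s$ is projective and $\Hom_A(P,M)=0$), after which all remaining terms are nonnegative and the cancellation does force each to be zero.
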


\begin{proof}
Let $(M,P)$ be a $\tau$-tilting pair and let $\{g^{M_1}, \dots, g^{M_k}, -g^{P_{k+1}}, \dots, -g^{P_{n}}\}$ be the set of $g$-vectors of its indecomposable direct summands and be $\{S_{1}, \dots, S_{n}\}$ as in Subsection \ref{ssc:stability}. 
For the sake of simplicity we denote $\{g^{M_1}, \dots, g^{M_r}, -g^{P_{k+1}}, \dots, -g^{P_{n}}\}$ simply by $\{g^{1}, g^{2}, \dots, g^{n}\}$. 

Consider $G_{(M,P)}^{T}$, the transpose of the $g$-matrix of the $\tau$-tilting pair $(M,P)$, and multiply it to the right with the matrix $X_{(M,P)}$ constructed in subsection \ref{ssc:bricks}.

$$
G_{(M,P)}^{T} X_{(M,P)}=\left( \begin{matrix}  
				(g^{1})_{1} & \dots   & (g^{1})_{n} \\
				\vdots      & \ddots & \vdots     \\
				(g^{n})_{1}  & \dots   & (g^{n})_{n}
		\end{matrix} \right) \left(\begin{matrix}
				[B_{1}]_{1} & \dots   & [B_{n}]_{1} \\
				\vdots      & \ddots & \vdots       \\
				[B_{1}]_{n}  & \dots   & [B_{n}]_{n}
		\end{matrix}\right). 
$$

Note that, by definition, 
$$\por{\theta_r}{[B_r]}=\left\langle \sum_{i \neq r}  g^{M_{i}} - \sum_{j\neq r}  g^{P_{j}},  [S_{r}] \right\rangle = 0.$$ 
Moreover, 
$$\por{\theta_{(M,P)}}{[B_r]} \neq 0$$
by Theorem \ref{stablemodcat}. 
Then, the linearity of $\theta_{(M,P)}$ implies that 
$$\langle g^{i}, [S_{j}]\rangle \neq0$$
if and only if $i = j$.
That implies that $G_{(M,P)}^{T} X_{(M,P)} = D$, where $D$ is diagonal.
$$
G_{(M,P)}^{T} X_{(M,P)}=\left( \begin{matrix}  
				(g^{1})_{1} & \dots   & (g^{1})_{n} \\
				\vdots      & \ddots & \vdots     \\
				(g^{n})_{1}  & \dots   & (g^{n})_{n}
		\end{matrix} \right) \left(\begin{matrix}
				[B_{1}]_{1} & \dots   & [B_{n}]_{1} \\
				\vdots      & \ddots & \vdots       \\
				[B_{1}]_{n}  & \dots   & [B_{n}]_{n}
		\end{matrix}\right) = \left(\begin{matrix}
				\lambda_{1} & \dots   & 0 \\
				\vdots           & \ddots & \vdots     \\
				0                   & \dots   & \lambda_{n}
		\end{matrix} \right)=D
$$
Moreover, lemma \ref{lem:stables} implies that if $\lambda_r > 0$, then $\lambda_r = 1$.

Suppose now that $\lambda_r < 0$ and consider the almost $\tau$-tilting pair $(M,P)_r$ defined in subsection \ref{ssc:bricks}. 
In that case \cite[Theorem 2.18]{AIR} implies that there exist a $\tau$-tilting pair $(\tilde{M}, \tilde{P})$ different from $(M,P)$ completing $(M,P)_r$.
Therefore the $g$-vectors $\{\tilde{g}^1, \dots, \tilde{g}^n\}$ of the indecomposable direct summands of $(\tilde{M}, \tilde{P})$ form a basis of $\mathbb{Z}^n$ such that $\tilde{g}^i=g^i$ if and only if $i \neq r$.
Moreover, \cite[Theorem 2.18]{AIR} implies that $[B_r]\in \Fac \tilde{M}$. 
Hence lemma \ref{lem:stables} implies that $\por{\tilde{g}^r}{[B_r]}=1$. 
So, we can apply lemma \ref{lem:minus} to conclude that $\por{g^r}{[B_r]}=-1$.

All this argument shows that $D$ is a diagonal matrix having only $1$ or $-1$ in the diagonal. 
Moreover, is easy to see that $D^2$ is the identity matrix. 
Therefore, multiplying by $D$ on the right we find the following equality. 
$$G_{(M,P)}^{T} X_{(M,P)} D = D^2 = Id$$
Hence, definition \ref{def:C-mat} implies $X_{(M,P)}D=C_{(M,P)}$, finishing the proof.
\end{proof}

As an immediate consequence of the previous result we get the following corollaries.

\begin{corollary}
Let $A$ be an algebra. Then the $c$-vectors of $A$ are sign-coherent.
\end{corollary}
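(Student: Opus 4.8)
The plan is to derive sign-coherence of $c$-vectors as an essentially immediate consequence of Theorem \ref{th:c-matrix}. Recall that every $c$-vector of $A$ is, by definition, a column of $C_{(M,P)}$ for some $\tau$-tilting pair $(M,P)$; so it suffices to show that each column of $C_{(M,P)}$ is either positive or negative. First I would invoke Theorem \ref{th:c-matrix} to write $C_{(M,P)} = X_{(M,P)} D$, where $D$ is diagonal with every diagonal entry equal to $1$ or $-1$. Fixing an index $r$, the $r$-th column $c_r$ of $C_{(M,P)}$ is therefore $\lambda_r [B_r]$ with $\lambda_r \in \{1,-1\}$ and $B_r \in \mathcal{B}_{(M,P)}$ the brick associated to $(M,P)$ as in Subsection \ref{ssc:bricks}.

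The key observation is then that $[B_r]$ is the dimension vector of an honest $A$-module, hence a vector all of whose entries are non-negative integers; that is, $[B_r]$ is positive in the sense of the definition of sign-coherence. Multiplying by $\lambda_r = \pm 1$ does not change this: if $\lambda_r = 1$ then $c_r = [B_r]$ is positive, and if $\lambda_r = -1$ then $c_r = -[B_r]$ has all entries non-positive, hence is negative. In either case $c_r$ is sign-coherent. Since $r$ and $(M,P)$ were arbitrary, every $c$-vector of $A$ is sign-coherent, which is the claim.

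There is no real obstacle here: all the work has already been done in establishing Theorem \ref{th:c-matrix} (which in turn rests on Lemmas \ref{lem:stables} and \ref{lem:minus}, Proposition \ref{prop:SinFac}, and the structural results of \cite{BSTwc} and \cite{Jasso2015}). The only point worth stating carefully is that the bricks $B_r$ are genuine modules in $\mod A$ rather than formal vectors, so that $[B_r] \in \mathbb{Z}_{\geq 0}^n$; this is built into the construction of $\mathcal{B}_{(M,P)}$ via Theorem \ref{stablemodcat}, where the $\theta_r$-stable modules live in $\mod A$. So the proof is a two-line deduction: apply Theorem \ref{th:c-matrix}, and note that $\pm$ a dimension vector is sign-coherent.
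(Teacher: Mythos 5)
Your proposal is correct and follows exactly the same route as the paper: the paper's own proof simply notes that Theorem \ref{th:c-matrix} shows every $c$-vector is $\pm$ the dimension vector of a brick, hence sign-coherent. Your version just spells out the same two-line deduction in slightly more detail.
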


\begin{proof}
In theorem \ref{th:c-matrix} is shown that every $c$-vector is either the dimension vector of a module or its opposite. 
Hence $c$-vectors are sign-coherent.
\end{proof}

\begin{corollary}
Let $A$ be an algebra. 
Then every positive $c$-vector is the opposite of a negative $c$-vector and, reciprocally, every negative $c$-vector is the opposite of a positive $c$-vector.
$\qed$
\end{corollary}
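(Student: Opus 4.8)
The plan is to deduce the statement directly from Theorem~\ref{th:c-matrix}, which is already in hand, together with the structural observation that the $C$-matrix of a $\tau$-tilting pair is invertible over $\mathbb{Z}$. First I would recall that by Definition~\ref{def:C-mat} we have $C_{(M,P)} = (G^{-1}_{(M,P)})^T$; since Theorem~\ref{basis} guarantees that the $g$-vectors of a $\tau$-tilting pair form a $\mathbb{Z}$-basis, $G_{(M,P)}$ is invertible over $\mathbb{Z}$, hence so is $C_{(M,P)}$. In particular none of its columns is zero, so every $c$-vector is nonzero.

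Next I would fix a positive $c$-vector $c_r$, i.e.\ a positive column of $C_{(M,P)}$ for some $\tau$-tilting pair $(M,P)$. By Theorem~\ref{th:c-matrix} we have $C_{(M,P)} = X_{(M,P)} D$ with $D$ diagonal and each diagonal entry $\pm 1$; writing out the $r$-th column, $c_r = \lambda_r [B_r]$ where $\lambda_r \in \{1,-1\}$ and $B_r \in \mathcal{B}_{(M,P)}$. Since $[B_r]$ is a genuine dimension vector it is positive (all entries $\geq 0$) and, as just noted, nonzero. If $\lambda_r = 1$ then $c_r = [B_r]$; in that case $-c_r = -[B_r]$ is negative, and it is the $r$-th column of $X_{(M,P)}(-D)$. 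The key point is that $-D$ is itself diagonal with entries $\pm 1$, so $X_{(M,P)}(-D) = C_{(M,P)'}$ for some $\tau$-tilting pair $(M,P)'$ --- or, more simply, one recalls that mutation at the summand $M_r$ (or $P_r$) produces a $\tau$-tilting pair whose $C$-matrix agrees with $C_{(M,P)}$ except that its $r$-th column is negated, by \cite[Theorem 2.18]{AIR} combined with the analysis in the proof of Theorem~\ref{th:c-matrix}; that negated column $-c_r = -[B_r]$ is then a negative $c$-vector. If instead $\lambda_r = -1$, then already $c_r = -[B_r]$ is negative, contradicting positivity unless $[B_r]=0$, which is excluded; so this case does not occur and $c_r = [B_r]$ always. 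The same mutation argument, run starting from a negative $c$-vector $c_r = -[B_r]$, shows $-c_r = [B_r]$ appears as a positive column of the $C$-matrix of the mutated pair, giving the reciprocal statement.

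The one step that needs genuine care --- and which I expect to be the main obstacle to a fully rigorous write-up --- is justifying that negating a single column of $C_{(M,P)}$ again yields a valid $C$-matrix of some $\tau$-tilting pair. This is exactly the content packaged inside the proof of Theorem~\ref{th:c-matrix}: passing to the $\tau$-tilting pair $(\tilde M,\tilde P)$ completing the almost $\tau$-tilting pair $(M,P)_r$, one has $\tilde g^i = g^i$ for $i\neq r$, and Lemma~\ref{lem:minus} forces the $r$-th column of $G_{(\tilde M,\tilde P)}$ to interact with the dual basis so that the corresponding $c$-vector is exactly the negative of $c_r$. So in the final write-up I would simply invoke Theorem~\ref{th:c-matrix} applied to both $(M,P)$ and its mutation at position $r$, observe that the two $C$-matrices differ precisely by sign in the $r$-th column, and conclude. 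The argument is short once Theorem~\ref{th:c-matrix} is available; there is essentially no computation, only bookkeeping of signs and the elementary fact that a dimension vector cannot be negative and nonzero at the same time.
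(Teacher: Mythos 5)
Your argument is correct and follows essentially the route the paper intends: the corollary is left as an immediate by-product of Theorem~\ref{th:c-matrix}, whose proof already contains exactly the mechanism you invoke, namely that mutating $(M,P)$ at position $r$ and applying Lemma~\ref{lem:minus} shows the $r$-th column of the mutated pair's $C$-matrix is $-c_r$, while positivity of $c_r=[B_r]$ is forced because a nonzero dimension vector cannot be negative. Do note, however, that your two side remarks --- that $X_{(M,P)}(-D)$ is itself a $C$-matrix of some $\tau$-tilting pair, and that the two $C$-matrices ``differ precisely by sign in the $r$-th column'' --- are false in general (the other columns of the mutated $C$-matrix can change, as one already sees in Table~\ref{haytabla}); fortunately your proof only needs the negation of the $r$-th column, which is precisely what Lemma~\ref{lem:minus} delivers.
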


\begin{corollary}\label{cor:c-dim}
Let $A$ be an algebra. 
Then every positive $c$-vector is the dimension vector of a brick.
$\qed$
\end{corollary}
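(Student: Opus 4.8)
The plan is to deduce this directly from Theorem \ref{th:c-matrix}, which carries all the real content; the corollary itself needs only an elementary sign argument. First I would fix a positive $c$-vector $v \in \mathsf{cv}^{+}(A)$. By Definition \ref{def:C-mat}, $v$ is a column of the $C$-matrix $C_{(M,P)}$ for some $\tau$-tilting pair $(M,P)$; say $v = c_r$, its $r$-th column.

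Next I would invoke Theorem \ref{th:c-matrix} to write $C_{(M,P)} = X_{(M,P)} D$ with $D$ diagonal and invertible over $\mathbb{Z}$, so that each diagonal entry of $D$ is $1$ or $-1$. Reading off the $r$-th column gives $c_r = \lambda_r [B_r]$ with $\lambda_r \in \{1,-1\}$, where $B_r \in \mathcal{B}_{(M,P)}$ is the brick associated to the $r$-th $g$-vector in the sense of subsection \ref{ssc:bricks}.

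The last step is sign bookkeeping. Since $B_r$ is a nonzero $A$-module, its dimension vector $[B_r]$ has all entries $\geq 0$ and at least one entry $> 0$; in particular $[B_r]$ is not a negative vector. If $\lambda_r = -1$ then $c_r = -[B_r]$ would have a strictly negative entry, contradicting the assumption that $v = c_r$ is positive. Hence $\lambda_r = 1$ and $v = [B_r]$, the dimension vector of the brick $B_r$, as required.

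I do not expect any genuine obstacle here: once Theorem \ref{th:c-matrix} is available, the entire difficulty has already been absorbed into Lemmas \ref{lem:stables} and \ref{lem:minus} and the construction of $X_{(M,P)}$ via Theorem \ref{stablemodcat}. The only point requiring a moment's care is excluding the case $c_r = -[B_r]$, and that is immediate from the remark that a nonzero dimension vector cannot be a negative vector.
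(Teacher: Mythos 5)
Your proof is correct and follows exactly the route the paper intends: the corollary is stated with $\qed$ as an immediate consequence of Theorem \ref{th:c-matrix}, and your argument (read off $c_r=\lambda_r[B_r]$ with $\lambda_r\in\{1,-1\}$ from $C_{(M,P)}=X_{(M,P)}D$, then rule out $\lambda_r=-1$ because a nonzero dimension vector cannot be negative) is precisely the sign bookkeeping being left to the reader. Nothing further is needed.
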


In previous work by N\'ajera-Ch\'avez (see \cite[Theorem 6]{NajeraChavez2013} and \cite[Theorem 11]{Chavez2015}) and Fu (\cite[Theorem 3.1, 4.8, 4.13]{Fu2017}) they have been showed that, for certain types of algebras, the set of positive $c$-vectors correspond to the dimension vectors of exceptional objects, that is, bricks without non-trivial self-extensions.
However, in corollary \ref{cor:c-dim} we only are able to prove that $c$-vectors correspond to dimension vectors of bricks. 
The following example shows that we can not do better.

\begin{example}
Let $A$ be the path algebra of the quiver 
\[
\begin{tikzcd}
    1\ar["\alpha"]{r} & 2\ar[loop above, "\beta"]
\end{tikzcd}
\]
modulo the ideal generated by all the paths of length two.
Then the indecomposable projective modules in $\mod A$ are $P(1)=\rep{1\\2}$ and $P(2)=\rep{2\\2}$.
Therefore, the pair $\left(\rep{1\\2}\oplus \rep{2\\2}, 0\right)$ is trivially a $\tau$-tilting pair.
Moreover, is easy to see that 
$$G_{\left(\rep{1\\2}\oplus \rep{2\\2}, 0\right)}=C_{\left(\rep{1\\2}\oplus \rep{2\\2}, 0\right)}=\left(\begin{matrix}
1&0 \\ 0&1
\end{matrix}\right).$$
Now, we have two positive $c$-vectors $(1,0)$ and $(0,1)$, corresponding to the dimension vectors to the simple modules $S(1)=\rep{1}$ and $S(2)=\rep{2}$, respectively.
Then the simple $A$-module $S(2)$ is certainly a brick, but not an exceptional object in $\mod A$ since 
the projective module $P(2)$ is a self-extension of $S(2)$.
\end{example}

Hence the problem of classifying all the algebras whose positive $c$-vectors correspond to its exceptional objects arises naturally.

\subsection{$c$-vectors and functorially finite torsion pairs}

Let $(M,P)$ be a $\tau$-tilting pair. 
In this subsection we study the relation between the set of bricks $\mathcal{B}_{(M,P)}$ associated to $(M,P)$ and the torsion pair $(\Fac M, M^{\perp})$ induced by it. 

In order to state one of the main theorems of this paper, we need to introduce some terminology and notation.

Given two $A$-modules $X$ and $Y$, we say that they are \textit{$\Hom$-orthogonal} if 
$$\Hom_A(X,Y)=\Hom_A(Y,X)=0.$$
The following proposition is a direct consequence of the definition of theorem \ref{th:c-matrix}.

\begin{lemma}
Let $(M,P)$ be a $\tau$-tilting pair, $(\Fac M, M^\perp)$ be the torsion pair induced by it and $B_i \in \B_{(M,P)}$.
If the dimension vector $[B_i]$ of $B_i$ is a column of $C_{(M,P)}$ then $B_i\in \Fac M$.
Dually, if the opposite of the dimension vector $[B_i]$ of $B_i$ is a column of $C_{(M,P)}$ then $B_i\in M^\perp$.

Moreover, if $B_s, B_t$ are two different bricks in $\B_{(M,P)}$ such that their dimension vectors are positive columns of $C_{(M,P)}$, then $B_i$ and $B_j$ are $\Hom$-orthogonal.
\end{lemma}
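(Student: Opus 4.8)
The plan is to extract everything from Theorem~\ref{th:c-matrix} together with the sign analysis performed in its proof. First I would recall that, by construction, $B_i \in \mathcal{B}_{(M,P)}$ is $\theta_i$-stable, and that Theorem~\ref{th:c-matrix} shows $G_{(M,P)}^T X_{(M,P)} = D$ with $D$ diagonal, having $r$-th diagonal entry $\lambda_r = \langle \theta_{(M,P)}, [B_r]\rangle \in \{1,-1\}$. The column $[B_i]$ of $X_{(M,P)}$ is the $i$-th column of $C_{(M,P)} = X_{(M,P)} D$ exactly when $\lambda_i = 1$, i.e. when $\langle \theta_{(M,P)}, [B_i]\rangle > 0$; the opposite $-[B_i]$ is a column of $C_{(M,P)}$ exactly when $\lambda_i = -1$, i.e. when $\langle \theta_{(M,P)}, [B_i]\rangle < 0$. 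So the hypothesis "$[B_i]$ is a column of $C_{(M,P)}$'' translates to $\langle \theta_{(M,P)}, [B_i]\rangle > 0$, and Proposition~\ref{prop:SinFac} (applied to the $\theta_i$-semistable module $B_i$) gives immediately $B_i \in \Fac M$. For the dual statement, when $-[B_i]$ is a column of $C_{(M,P)}$ we have $\langle \theta_{(M,P)}, [B_i]\rangle < 0$; here I would rerun the argument of Theorem~\ref{th:c-matrix}: completing the almost $\tau$-tilting pair $(M,P)_i$ by the \emph{other} $\tau$-tilting pair $(\tilde M, \tilde P)$, one has $\langle \tilde\theta_{(\tilde M,\tilde P)}, [B_i]\rangle > 0$, hence $B_i \in \Fac \tilde M$; since $\Fac M \subsetneq \Fac \tilde M$ cannot both contain $B_i$ while $\langle\theta_{(M,P)},[B_i]\rangle < 0$, one concludes $B_i \in M^\perp$ — more directly, $\langle \theta_{(M,P)},[B_i]\rangle = \langle g^{M_i},[B_i]\rangle = \dim_k\Hom_A(M_i,B_i) - \dim_k\Hom_A(B_i,\tau M_i)$ or $\langle -g^{P_i},[B_i]\rangle = -\dim_k\Hom_A(P_i,B_i)$ depending on the index of $i$, and in either case negativity forces $\Hom_A(M,B_i) = 0$, i.e. $B_i \in M^\perp$.

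For the last assertion, suppose $B_s, B_t \in \mathcal{B}_{(M,P)}$ are distinct with $[B_s], [B_t]$ both positive columns of $C_{(M,P)}$; by the first part both lie in $\Fac M$. The key point is that each $B_r$ is, via Theorem~\ref{stablemodcat}, the image under $G$ of the (unique up to isomorphism) simple module of the reduced algebra $\tilde A_{(M,P)_r}$, and $\mod A^{ss}_{\theta_r}$ is an abelian category whose simple objects are exactly the $\theta_r$-stable modules. I would argue that $B_t$ is $\theta_s$-semistable: indeed $\langle \theta_s, [B_t]\rangle = \langle \theta_{(M,P)} - g^{M_s \text{ or } P_s}, [B_t]\rangle$, and since $B_t \in \Fac M$ the term $\langle g^{M_s}, [B_t]\rangle$ (resp. $\langle -g^{P_s},[B_t]\rangle$) equals $\dim_k\Hom_A(M_s,B_t) \geq 0$ (resp. $-\dim_k\Hom_A(P_s,B_t)$), and a compatibility of semistability with the torsion pair should pin down that $B_t$ is $\theta_s$-semistable while $B_s$ is $\theta_s$-stable; in an abelian category a nonzero map between a simple object and another object of that category which is not a multiple of it must vanish, so $\Hom_A(B_s, B_t) = 0$, and by symmetry $\Hom_A(B_t, B_s) = 0$.

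I expect the main obstacle to be the last step: making precise the claim that for $s \neq t$ the brick $B_t$ actually lies in $\mod A^{ss}_{\theta_s}$ (or at least that $\Hom_A(B_s,B_t)$ is computed inside an abelian category in which $B_s$ is simple and $B_t$ is a distinct object). The cleanest route is probably to avoid semistability of $B_t$ altogether and instead use that, under the equivalence $F \colon \mod A^{ss}_{\theta_s} \to \mod \tilde A_{(M,P)_s}$, one has $\Hom_A(B_s, X) = \Hom_{\tilde A_{(M,P)_s}}(S, F(X))$ whenever $X \in \mod A^{ss}_{\theta_s}$; since $\tilde A_{(M,P)_s}$ is a local algebra with simple module $S$, and $F(B_s) = S$ while $F$ is an equivalence, distinctness of $B_s$ and $B_t$ as stable modules forces these Hom-spaces to vanish. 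Alternatively, one can invoke directly that a set of pairwise non-isomorphic bricks arising as the $c$-vectors of a single $\tau$-tilting pair coincides with the brick labelling of the arrows at the corresponding vertex of the exchange graph, for which $\Hom$-orthogonality is known (cf. \cite{Demonet2017, BCZ}); but to keep the paper self-contained I would prefer the reduced-algebra argument above.
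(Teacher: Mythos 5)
Your treatment of the first assertion follows the paper's route: translating ``$[B_i]$ is a column of $C_{(M,P)}$'' into $\langle \theta_{(M,P)},[B_i]\rangle>0$ and invoking Proposition~\ref{prop:SinFac} is exactly what is done, and the negative case is likewise handled by the dual of that proposition. Be careful, however, with your ``more directly'' sentence: negativity of $\dim_k\Hom_A(M_i,B_i)-\dim_k\Hom_A(B_i,\tau M_i)$ forces $\Hom_A(B_i,\tau M_i)\neq 0$, not $\Hom_A(M,B_i)=0$, and $B_i\in\Fac\tilde M\setminus\Fac M$ does not by itself place $B_i$ in $M^\perp$. The honest argument is the dual of Proposition~\ref{prop:SinFac}, coming from the description of $\mod A^{ss}_{\theta_i}$ inside $M^\perp$ when $(M,P)$ is the smaller of the two completions of $(M,P)_i$.

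The genuine gap is in the $\Hom$-orthogonality part, and it is precisely the step you flagged. Your argument needs $B_t$ to lie in $\mod A^{ss}_{\theta_s}$ (either to use simplicity of $B_s$ in that abelian category, or to transport the Hom-space through the equivalence $F$), and it does not: since $B_t$ is $\theta_t$-stable, \cite[Proposition 3.13]{BSTwc} gives $\Hom_A(M_s,B_t)=0=\Hom_A(B_t,\tau M_s)$ for $s\neq t$, hence $\langle g^{M_s},[B_t]\rangle=0$ and $\langle\theta_s,[B_t]\rangle=\langle\theta_{(M,P)},[B_t]\rangle-\langle g^{M_s},[B_t]\rangle=1\neq 0$, so $B_t$ is \emph{not} $\theta_s$-semistable. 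Neither of your two proposed repairs survives this computation, and the appeal to the brick labelling of \cite{Demonet2017,BCZ} would import the statement rather than prove it. The actual fix is more elementary and bypasses semistability of $B_t$ entirely: $\theta_s$-stability of $B_s$ already yields $\Hom_A(M_t,B_s)=0$ (as $M_t$ is a direct summand of $\bigoplus_{i\neq s}M_i$); since $B_t\in\Fac M$ and $\Hom_A(M_i,B_t)=0$ for $i\neq t$, there is an epimorphism $p_t\colon M_t^{m}\to B_t$; composing any $f\colon B_t\to B_s$ with $p_t$ gives a map $M_t^{m}\to B_s$, which vanishes, and since $p_t$ is an epimorphism $f=0$. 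The symmetric argument kills $\Hom_A(B_s,B_t)$.
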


\begin{proof}
The first part of the statement is a particular case proposition \ref{prop:SinFac} and its dual. 

Now we prove the moreover part of the statement. 
By construction $B_s$ is a $\theta_s$-stable module.
Then \cite[Proposition 3.13]{BSTwc} implies that 
$$B_s\in (\bigoplus_{i\neq s} M_i)^\perp \cap {^\perp}(\bigoplus_{i\neq s} \tau M_i) \cap (\bigoplus_{j\neq s} P_j)^\perp. $$
Moreover, the fact that $B_s \in \Fac M$ yields an epimorphism $p_s: M_s \to B_s$. 
The same argument shows the existence of an epimorphism $p_t: M_t \to B_t$.

Now, every morphism $f\in \Hom_A(B_t, B_s)$ can be composed to the left with $p_t$ to get a morphism $p_tf:M_t \to B_s$.
But $M_t$ is a direct summand of $\oplus_{i\neq s}M_i$, so $\Hom_A(M_t, B_s)=0$.
Hence $f=0$ because $p_t$ is an epimorphism. 
The fact that $\Hom_A(B_s, B_t)=0$ is shown in the same fashion.
This finishes the proof. 
\end{proof}

Consider a set $\mathcal{N}=\{N_1, \dots, N_t\}$ of $A$-modules and let $N=\bigoplus_{i=1}^t N_i$.
Then we denote by $T(\mathcal{N})$ the full subcategory of $\mod A$ having as objects that can be filtered by objects in $\Fac N$. 
More precisely, 
$$T(\mathcal{N}):=\{X\in \mod A: 0=X_0\subset X_1 ...\subset X_s=X \text{ where $X_i/X_{i-1} \in \Fac N$}\}.$$
Is easy to see that $T(\mathcal{N})$ is closed under quotients and extensions, which implies that $T(\mathcal{N})$ is always a torsion pair.
Moreover, as shown in \cite[Proposition 3.3]{DIJ}, $T(\mathcal{N})$ is the minimal torsion class containing $\mathcal{N}$.

Let $N$ be an $A$-module and $N'$ be a self-extension of $N$.
Then is easy to see that $T(N)=T(N')$. 
Therefore some redundancies needs to be avoided.
In order to do that, from now on we restrict ourselves to the case where every $B\in\mathcal{N}$ is a brick.

Then, a natural question to ask is the following:
Given a torsion class $\T$ in $\mod A$, what is minimal collection of bricks $\mathcal{N}$ such that $\T=T(\mathcal{N})$?

This was answered by Barnard, Carrol and Zhu in \cite{BCZ} as follows.

\begin{theorem}\cite[Theorem 1.0.8]{BCZ}
Let $\T$ be a torsion class in $\emph{mod} A$.
Then the minimal set $\mathcal{N}_{\T}\subset\T$ such that $T(\mathcal{N})=\T$ is the set which is maximal for the following properties:
\begin{itemize}
\item $\mathcal{N}\subset \T$;
\item $\Hom_A(B,B')=0$ for all $B, B' \in \mathcal{N}$.
\end{itemize}
\end{theorem}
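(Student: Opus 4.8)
The plan is to exhibit $\mathcal{N}_{\T}$ as the $\Hom$-orthogonal set of bricks attached to $\T$ through the theory of wide subcategories, to check that this set generates $\T$ and cannot be enlarged inside $\T$, and finally to show that it is the \emph{only} $\Hom$-orthogonal set of bricks generating $\T$; these three facts together are the statement. For the construction I would use (after Ingalls and Thomas, with the extension to arbitrary finite-dimensional algebras) the wide subcategory
$$\mathsf{W}(\T)=\{\,X\in\T\ :\ \ker f\in\T\ \text{for every}\ f\colon Y\to X\ \text{with}\ Y\in\T\,\},$$
which is a full, abelian, extension-closed length subcategory of $\mod A$ and, when $\T$ is functorially finite (the situation relevant to this paper), has the property that $\T$ is the smallest torsion class containing it. I then let $\mathcal{N}_{\T}$ be a set of representatives of the isomorphism classes of the simple objects of $\mathsf{W}(\T)$. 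Since $\mathsf{W}(\T)$ is full in $\mod A$, Schur's lemma applied inside it shows that every $B\in\mathcal{N}_{\T}$ is a brick and that $\Hom_A(B,B')=0$ whenever $B\not\cong B'$ in $\mathcal{N}_{\T}$, so $\mathcal{N}_{\T}\subset\T$ is $\Hom$-orthogonal. One small observation will be used twice: if $B\in\mathsf{W}(\T)$ and $I\subseteq B$ is a submodule with $I\in\T$, then $I\in\mathsf{W}(\T)$, because for any test morphism $g\colon Z\to I$ with $Z\in\T$ one has $\ker g=\ker(Z\to I\hookrightarrow B)\in\T$ since $B\in\mathsf{W}(\T)$; in particular a simple object of $\mathsf{W}(\T)$ has no proper nonzero submodule lying in $\T$.

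The first verification is that $T(\mathcal{N}_{\T})=\T$. Since $\mathsf{W}(\T)$ is a length category, each of its objects admits a finite filtration with subquotients among $\mathcal{N}_{\T}$, so $\mathsf{W}(\T)\subseteq T(\mathcal{N}_{\T})\subseteq\T$, and minimality of $\T$ among torsion classes containing $\mathsf{W}(\T)$ forces equality. This gives the principle I shall reuse: if $C$ is a brick in $\T$ with $\Hom_A(B,C)=0$ for every $B\in\mathcal{N}_{\T}$, then $C=0$; indeed $C\in\T=T(\mathcal{N}_{\T})$ produces a nonzero submodule $X\subseteq C$ lying in $\Fac(\bigoplus_{B\in\mathcal{N}_{\T}}B)$, and composing an epimorphism onto $X$ with $X\hookrightarrow C$ yields a nonzero morphism from some $B\in\mathcal{N}_{\T}$ to $C$. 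An immediate consequence is that $\mathcal{N}_{\T}$ is a maximal $\Hom$-orthogonal set of bricks contained in $\T$: no brick of $\T$ outside $\mathcal{N}_{\T}$ can be adjoined while keeping $\Hom$ vanishing in both directions.

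Next I would prove the uniqueness that makes $\mathcal{N}_{\T}$ canonical. Let $\mathcal{N}$ be any $\Hom$-orthogonal set of bricks of $\T$ with $T(\mathcal{N})=\T$; I claim $\mathcal{N}=\mathcal{N}_{\T}$. For $\mathcal{N}_{\T}\subseteq\mathcal{N}$: given $B\in\mathcal{N}_{\T}$, the argument of the previous paragraph provides a nonzero $g\colon C\to B$ with $C\in\mathcal{N}$; its image $I$ lies in $\T$, hence in $\mathsf{W}(\T)$ by the observation above, so $I$ is a nonzero subobject of the simple object $B$ of $\mathsf{W}(\T)$ and therefore $I=B$, i.e.\ $g$ is an epimorphism; its kernel $K$ lies in $\T=T(\mathcal{N})$, so if $K$ were nonzero there would be a nonzero map $C'\to K\hookrightarrow C$ with $C'\in\mathcal{N}$, contradicting either that $C$ is a brick (if $C'\cong C$, since the composite $C\xrightarrow{\ \sim\ }C'\to K\hookrightarrow C$ is then a nonzero non-invertible endomorphism of $C$) or the $\Hom$-orthogonality of $\mathcal{N}$ (if $C'\not\cong C$); hence $K=0$, $C\cong B$, and $B\in\mathcal{N}$. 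For $\mathcal{N}\subseteq\mathcal{N}_{\T}$: if some $C\in\mathcal{N}$ lay outside $\mathcal{N}_{\T}$, then $\mathcal{N}_{\T}\cup\{C\}\subseteq\mathcal{N}$ would be a $\Hom$-orthogonal set of bricks in $\T$ strictly larger than $\mathcal{N}_{\T}$, contradicting the maximality just established. Hence $\mathcal{N}=\mathcal{N}_{\T}$: there is exactly one $\Hom$-orthogonal set of bricks generating $\T$, namely $\mathcal{N}_{\T}$, which is therefore at the same time the minimal set with $T(\mathcal{N})=\T$ and maximal among $\Hom$-orthogonal subsets of $\T$, as claimed.

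The step I expect to be the main obstacle is showing that $\T$ is the smallest torsion class containing $\mathsf{W}(\T)$, equivalently $T(\mathcal{N}_{\T})=\T$. For functorially finite $\T$ this is supplied by $\tau$-tilting theory (through $\tau$-tilting reduction \cite{Jasso2015}), but for an arbitrary torsion class one cannot argue through $\mathsf{W}(\T)$ directly — it may fail to recover $\T$, and $\mathcal{N}_{\T}$ may be infinite — so one should instead \emph{define} $\mathcal{N}_{\T}$ as the set of brick labels of the cover relations $\T\gtrdot\T'$ in the lattice of torsion classes, whose existence and compatibility come from the structure theory of that lattice (Demonet, Iyama, Reading, Reiten and Thomas). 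With that definition the extension closure of $\mathcal{N}_{\T}$ plays the role of $\mathsf{W}(\T)$ and the remaining steps go through unchanged.
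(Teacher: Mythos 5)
The paper offers no proof of this statement to compare against: it is imported verbatim from \cite{BCZ}. Judged on its own terms, your argument via the Ingalls--Thomas wide subcategory $\mathsf{W}(\T)$ is sound in the functorially finite case, which is the only case the surrounding paper ever uses. Each step checks out: the simples of the full abelian length subcategory $\mathsf{W}(\T)$ are pairwise Hom-orthogonal bricks by Schur's lemma; your ``principle'' that any nonzero object of $T(\mathcal{N})$ receives a nonzero map from some member of $\mathcal{N}$ is correct (the bottom layer of the filtration is a nonzero submodule in $\Fac N$); the key observation that $\ker(C\to B)\in\T$ for $C\in\T$, $B\in\mathsf{W}(\T)$ is exactly the defining property of $\mathsf{W}(\T)$; and the two inclusions in the uniqueness argument follow. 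This is a genuinely different route from \cite{BCZ}, who argue through minimal extending modules and the brick labelling of Hasse arrows rather than through $\mathsf{W}(\T)$. Your route buys something the paper actually needs: the precise statement you prove is that there is \emph{exactly one} Hom-orthogonal set of bricks $\mathcal{N}$ with $T(\mathcal{N})=\T$, which is what Remark \ref{rmk:unicity} silently relies on. The theorem as transcribed is ambiguous on this point, since maximal Hom-orthogonal sets of bricks in $\T$ need not be unique: for the path algebra of $1\to 2$ and $\T=\mod A$, both $\{S(1),S(2)\}$ and $\{P(1)\}$ are maximal Hom-orthogonal sets of bricks, but only the first generates $\T$.

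The genuine gap is the one you flag yourself. For a torsion class that is not functorially finite, $T(\mathsf{W}(\T))$ may be strictly smaller than $\T$, and the entire construction of $\mathcal{N}_{\T}$ as the simples of $\mathsf{W}(\T)$ breaks at that point; moreover $\mathcal{N}_{\T}$ may be infinite, so $T(\mathcal{N}_{\T})$ as defined in the paper (via $\Fac$ of a single module) needs reinterpreting. Your proposed repair --- redefining $\mathcal{N}_{\T}$ as the brick labels of the cover relations below $\T$ --- is not a proof but a citation: that these labels exist in sufficient supply and that they generate $\T$ is essentially the content of the theorem being proved, so in full generality your write-up reduces the statement to itself. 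Since the paper only invokes the result for $\T=\Fac M$ with $(M,P)$ a $\tau$-tilting pair, the honest fix is to state and prove the theorem in that generality, where your argument is complete modulo the single external input $T(\mathsf{W}(\Fac M))=\Fac M$ supplied by $\tau$-tilting reduction \cite{Jasso2015}.
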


\begin{remark}\label{rmk:unicity}
Note that the maximality of $\mathcal{N}_{\T}$ implies that it is unique for every torsion class $\T$.
\end{remark}

Now we are able to state and prove the main theorem of this paper, which gives a characterisation of all modules whose dimension vectors are positive $c$-vectors.

\begin{theorem}\label{th:FacM}
Let $(M,P)$ be a $\tau$-tilting pair and define $\B_{(M,P)}^+\subset\B_{(M,P)}$ as 
$$\B_{(M,P)}^+:=\{B_i\in\B_{(M,P)}: [B_i] \text{ is a column of $C_{(M,P)}$}\}.$$
Then $T(\B_{(M,P)}^+)=\emph{Fac} M$.

Moreover, the reciprocal also holds. 
Namely, if $\mathcal{N}$ is such that:
\begin{itemize}
\item $B$ is a brick for every $B\in\mathcal{N}$;
\item $\Hom_A(B,B')=0$ for all $B, B' \in \mathcal{N}$;
\item $T(\mathcal{N})$ is functorially finite. 
\end{itemize}
Then $\mathcal{N}=\B_{(M,P)}^+$ for some $\tau$-tilting pair $(M,P)$.
\end{theorem}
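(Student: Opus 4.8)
The plan is to prove the two directions separately. For the forward direction, I want to show $T(\B_{(M,P)}^+)=\Fac M$ by a double inclusion. The inclusion $T(\B_{(M,P)}^+)\subseteq \Fac M$ is immediate: by the previous lemma every $B_i\in\B_{(M,P)}^+$ lies in $\Fac M$, and $\Fac M$ is a torsion class, hence closed under the quotients and extensions that generate $T(\B_{(M,P)}^+)$; so $T(\B_{(M,P)}^+)$, being the minimal torsion class containing $\B_{(M,P)}^+$, is contained in $\Fac M$. For the reverse inclusion I would argue that $\Fac M$ cannot be strictly larger. Here the key tool is the brick labelling / the result of Barnard--Carroll--Zhu quoted above: the minimal generating set of bricks $\mathcal N_{\Fac M}$ of the torsion class $\Fac M$ is the unique maximal $\Hom$-orthogonal set of bricks inside $\Fac M$. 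By the preceding lemma $\B_{(M,P)}^+$ is a $\Hom$-orthogonal set of bricks contained in $\Fac M$, so $\B_{(M,P)}^+\subseteq \mathcal N_{\Fac M}$; and conversely I must show no brick can be added to $\B_{(M,P)}^+$ while staying $\Hom$-orthogonal and inside $\Fac M$. The cleanest way is a counting/maximality argument: I expect $|\B_{(M,P)}^+|$ to equal the number of positive columns of $C_{(M,P)}$, which in turn equals $|M|$ (each $M_r$, $1\le r\le k$, contributes the positive column obtained from Lemma \ref{lem:stables}, giving $\por{g^{M_r}}{[B_r]}=1$); and the minimal brick generating set of the functorially finite torsion class $\Fac M$ also has exactly $|M|$ elements, since $\Fac M$ corresponds under Adachi--Iyama--Reiten to the $\tau$-tilting pair $(M,P)$ and the bricks in $\mathcal N_{\Fac M}$ are in bijection with the arrows leaving the chamber $\mathfrak C_{(M,P)}$, i.e.\ with the indecomposable summands of $M$. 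Matching cardinalities forces $\B_{(M,P)}^+=\mathcal N_{\Fac M}$, hence $T(\B_{(M,P)}^+)=T(\mathcal N_{\Fac M})=\Fac M$.

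For the reciprocal direction, suppose $\mathcal N$ is a set of pairwise $\Hom$-orthogonal bricks with $\T:=T(\mathcal N)$ functorially finite. By Adachi--Iyama--Reiten, $\T=\Fac M$ for a unique $\tau$-tilting pair $(M,P)$. Applying the already-proved forward direction to $(M,P)$ gives $T(\B_{(M,P)}^+)=\Fac M=T(\mathcal N)$. Now both $\mathcal N$ and $\B_{(M,P)}^+$ are $\Hom$-orthogonal sets of bricks inside $\T$, and each generates $\T$ as a torsion class; by the Barnard--Carroll--Zhu theorem and Remark \ref{rmk:unicity}, the minimal (equivalently, maximal $\Hom$-orthogonal) such set is unique. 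Since a $\Hom$-orthogonal set of bricks that generates $\T$ and is not maximal could be enlarged, and enlarging would keep it inside $\T$ and $\Hom$-orthogonal, any generating $\Hom$-orthogonal set is automatically the maximal one — so both $\mathcal N$ and $\B_{(M,P)}^+$ coincide with $\mathcal N_{\T}$, giving $\mathcal N=\B_{(M,P)}^+$.

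The main obstacle I anticipate is the reverse inclusion in the forward direction, specifically pinning down that $|\B_{(M,P)}^+|=|M|$ and that this equals $|\mathcal N_{\Fac M}|$. The first equality needs the dichotomy from Theorem \ref{th:c-matrix}: for $1\le r\le k$ the column coming from $M_r$ is positive (it is $[B_r]$ with $\por{g^{M_r}}{[B_r]}=1>0$ forcing sign-coherence to land on the positive side), whereas for $k+1\le r\le n$ the column coming from $P_r$ is negative, by the sign of $\por{-g^{P_r}}{[B_r]}=-\dim_k\Hom_A(P_r,B_r)\le 0$ computed inside the proof of Lemma \ref{lem:stables}; so exactly $k=|M|$ columns of $C_{(M,P)}$ are positive. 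The second equality — that the minimal brick generating set of a functorially finite torsion class $\Fac M$ has $|M|$ elements — should follow from the theory of brick labels of the exchange graph (Corollary \ref{cor:labeling} and the references \cite{Demonet2017,BCZ}), since the arrows out of the vertex $(M,P)$ are in bijection with the $|M|$ indecomposable summands of $M$ and their brick labels are precisely the minimal generators. Care is needed to invoke this without circularity, but since Theorem \ref{th:c-matrix} and Corollary \ref{cor:labeling} are available, and the Barnard--Carroll--Zhu result is quoted, all the needed ingredients are in place.
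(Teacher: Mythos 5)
The forward direction of your argument hinges on the claim that exactly $k=|M|$ columns of $C_{(M,P)}$ are positive and that the minimal brick generating set of $\Fac M$ also has $|M|$ elements; both counts are wrong. Lemma \ref{lem:stables} only gives the one-sided implication that \emph{if} $\por{\theta_{(M,P)}}{[B_r]}>0$ then the pairing equals $1$ (and then necessarily $r\le k$); it does not say that $r\le k$ forces positivity, because the step $\Hom_A(B_r,\tau M_r)=0$ uses $B_r\in\Fac M$, which Proposition \ref{prop:SinFac} only provides \emph{under} the positivity hypothesis. The paper's own Table \ref{haytabla} refutes the count: for $(M,P)=\left(\rep{1\\2}\oplus\rep{1}\oplus\rep{3},0\right)$ one has $|M|=3$ but only two positive columns, and $\Fac M$ is generated by the two bricks $\rep{1\\2}$ and $\rep{3}$, not by three. (The common value of the two cardinalities is the number of arrows going \emph{down} from $\Fac M$ in the Hasse quiver of torsion classes, not $|M|$, and showing that both sides equal that number without circularity is essentially the content of the theorem itself.) A second gap is the inclusion $\B_{(M,P)}^+\subseteq\mathcal{N}_{\Fac M}$: maximal $\Hom$-orthogonal sets of bricks inside a torsion class are not unique --- for the path algebra of $1\to 2$, both $\left\{\rep{1\\2}\right\}$ and $\left\{\rep{1},\rep{2}\right\}$ are maximal such sets inside the torsion class $\mod A$ --- so being $\Hom$-orthogonal and contained in $\Fac M$ does not by itself place $\B_{(M,P)}^+$ inside the Barnard--Carroll--Zhu set.

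The paper avoids all of this by a direct argument: by Theorem \ref{stablemodcat} each generator $N_s$ of the category of $\theta_s$-semistable modules is filtered by copies of $B_s$, so $T(\B_{(M,P)}^+)=T(\{N_s\})$; and since $N_s$ is the cokernel of the right $\mathrm{add}(\bigoplus_{i\ne s}M_i)$-approximation of $M_s$, \cite[Lemma 3.7]{DIJ} gives $T(\{N_s\})=\Fac M$ with no cardinality comparison and no appeal to uniqueness of maximal semibricks. Your reciprocal direction follows the same route as the paper's (reduce to the forward direction via \cite[Theorem 2.7]{AIR} and the uniqueness of the minimal generating set of bricks), so it is the forward direction that must be repaired, most naturally by replacing the counting argument with the filtration argument just described.
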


\begin{proof}
First, let $(M,P)$ be a $\tau$-tilting pair and let $\B_{(M,P)}^+$ as in the statement.
Then, it follows from theorem \ref{stablemodcat} that $N_s$ can be filtered by $B_s$ for all $B_s\in\B_{(M,P)}^+$, where $N_i$ minimal generator of the category of $\theta_s$-semistable objects.
This implies that $$T(\B_{(M,P)}^+)=T(\{N_s: N_s \text{ is the generator of the $\theta_s$-semistable modules}\})$$
Moreover, \cite[Lemma 2.3]{BSTwc} implies that $N_i$ is the cokernel of right $add(\oplus_{i\neq s}M_i)$-approximation of $M_s$. 
Therefore $T(\B_{(M,P)^+})=\Fac M$ by \cite[Lemma 3.7]{DIJ}.

Now we show the moreover part of the statement.
Let $\mathcal{N}$ as in the statement. 
Is clear that $\mathcal{N}$ is minimal set inducing $T(\mathcal{N})$.
Moreover, since $T(\mathcal{N})$ is functorially finite, then $T(\mathcal{N})=\Fac M$ for some $\tau$-tilting pair $(M,P)$ by \cite[Theorem 2.7]{AIR}.
On the other hand, we just showed that $\Fac M=T(\B_{(M,P)}^+)$.
Hence $\mathcal{N}=\B_{(M,P)}^+$ by remark \ref{rmk:unicity}.
This finishes the proof.
\end{proof}

\begin{remark}
Note that a single positive $c$-vector can be the dimension vector of multiple bricks. 
For instance, let $A$ be the path algebra of the quiver 
$$\begin{tikzcd}
1 \ar[r, "\alpha", bend left =20] & 2 \ar[l, "\beta", bend left =20] \\
\end{tikzcd}$$ 
bounded by its radical square. 
Then $(M_{1},P_{1})=(\rep{1\\2}\oplus{1}, 0)$ and $(M_{2}, P_{2})=(\rep{2\\1}\oplus\rep{2}, 0)$ are two $\tau$-tilting pairs in $\mod A$ whose $C$-matrices are 
$$C_{(M_{1},P_{1})}=\begin{pmatrix} 1 & 0 \\ 1 & -1 \end{pmatrix} \qquad \text{ and } \qquad C_{(M_{2},P_{2})}=\begin{pmatrix} 1 & -1 \\ 1 & 0 \end{pmatrix}$$
respectively. 
Therefore $(M_{1}, P_{1})$ and $(M_{2}, P_{2})$ induce the same positive $c$-vector $(1,1)$. 

On the other hand, the brick $B_{1}$ induced by $(M_{1},P_{1})$ is $B_{1}=\rep{1\\2}$ while the brick $B_{2}$ induced by $(M_{2},P_{2})$ is $B_{2}=\rep{2\\1}$.
\end{remark}

\section{The wall and chamber structure of an algebra}\label{sec:wallandchamber}

If one looks at the Definition \ref{stability} of King's stability condition, one see that there are two directions to study them: 
either one fix a functional $\theta$ and study the category of $\theta$-semistable modules, as is done in Theorem \ref{stablemodcat}, or one fix a module $M$ and study the functionals $\theta$ making $M$ a $\theta$-semistable module. 
In this subsection we continue the work of \cite{BSTwc} considering the second option. 

\begin{definition}
The \textit{stability space} of an $A$-module $M$ is $$\mathfrak{D}(M)=\{\theta\in\mathbb{R}^n : M \text{ is $\theta$-semistable}\}.$$
Moreover the stability space $\mathfrak{D}(M)$ of $M$ is said to be a \textit{wall} when $\mathfrak{D}(M)$ has codimension one. 
In this case we say that $\mathfrak{D}(M)$ is the wall defined by $M$.
\end{definition}

Note that not every $\theta$ belongs to the stability space $\mathfrak{D}(M)$ for some nonzero module $M$. For instance, is easy to see that $\theta=(1,1, \dots, 1)$
is an example of such a functional for every algebra $A$. 
This leads to the following definition.

\begin{definition}
Let $A$ be an algebra such that $rk(K_0(A))=n$ and
$$\mathfrak{R}=\mathbb{R}^n\setminus\overline{\bigcup\limits_{\substack{ M\in \mod A}}\mathfrak{D}(M)}$$
be the maximal open set of all $\theta$ having no $\theta$-semistable modules other that the zero object. 
Then a dimension $n$ connected component $\mathfrak{C}$ of $\mathfrak{R}$ is called a \textit{chamber} and this partition of $\mathbb{R}^{n}$ is known as the \textit{wall and chamber structure} of $A$.
\end{definition}

One of the main objectives of \cite{BSTwc} was to describe the wall and chamber structure of an algebra using $\tau$-tilting theory. 
This was partially achieved, when they determined that every $\tau$-tilting pairs induce a chamber and they described the walls surrounding that chamber. See \cite[Corollary 3.18]{BSTwc}

In this context, one can use the results in this paper to go one step further, as shown in the following result.

\begin{corollary}\label{cor:walls}
Let $(M,P)$ be a $\tau$-tilting pair and $\mathfrak{C}_{(M,P)}$ be the chamber induced by it. 
Then the walls surrounding $\mathfrak{C}_{(M,P)}$ are defined by the $c$-vectors corresponding to $(M,P)$, that is, the columns of the matrix $C_{(M,P)}$.
\end{corollary}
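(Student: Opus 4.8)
The plan is to combine Theorem~\ref{th:c-matrix} with the description of the walls surrounding $\mathfrak{C}_{(M,P)}$ given in \cite[Corollary 3.18]{BSTwc}. First I would recall what that corollary says: for a $\tau$-tilting pair $(M,P)$, the chamber $\mathfrak{C}_{(M,P)}$ is cut out by the $n$ hyperplanes $\mathfrak{D}(B_r)$, where $B_r$ is precisely the $\theta_r$-stable brick constructed in subsection~\ref{ssc:bricks} for the almost $\tau$-tilting pair $(M,P)_r$. Indeed, by construction $\theta_r$ lies on the wall $\mathfrak{D}(B_r)$ and on no other $\mathfrak{D}(B_s)$ with $s\neq r$ (since $\langle\theta_r,[B_s]\rangle\neq 0$ for $s\neq r$ by Theorem~\ref{stablemodcat}), so the $n$ walls $\mathfrak{D}(B_1),\dots,\mathfrak{D}(B_n)$ are exactly the facets of $\mathfrak{C}_{(M,P)}$.

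Next I would translate this into a statement about $c$-vectors. Each wall $\mathfrak{D}(B_r)$ is by definition the codimension-one stability space of the brick $B_r$, hence it is contained in the hyperplane $\{\theta\in\mathbb{R}^n : \langle\theta,[B_r]\rangle = 0\}$ perpendicular to the dimension vector $[B_r]$; since $B_r$ is a brick its stability space is genuinely of codimension one and thus equals this hyperplane (intersected with the relevant half-spaces cutting out the facet). By Theorem~\ref{th:c-matrix} we have $C_{(M,P)} = X_{(M,P)}D$ with $D$ diagonal and $\pm 1$ on the diagonal, so the $r$-th column $c_r$ of $C_{(M,P)}$ equals $\pm[B_r]$. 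A hyperplane through the origin is unchanged by negating its normal vector, so $\{\theta : \langle\theta, c_r\rangle = 0\} = \{\theta : \langle\theta,[B_r]\rangle = 0\} \supseteq \mathfrak{D}(B_r)$. Therefore the walls surrounding $\mathfrak{C}_{(M,P)}$ are exactly the $\mathfrak{D}(B_r)$, each of which is defined by (i.e.\ is perpendicular to) the corresponding $c$-vector $c_r$, which is what the statement asserts.

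The only real subtlety — and the step I would be most careful about — is the matching of \emph{which} brick corresponds to \emph{which} wall, i.e.\ checking that the brick $B_r$ used in \cite[Corollary 3.18]{BSTwc} to describe the $r$-th wall is the same $B_r\in\mathcal{B}_{(M,P)}$ whose dimension vector (up to sign) is the $r$-th column of $C_{(M,P)}$. This is immediate from the construction: in both places $B_r$ is defined as the unique (up to isomorphism) $\theta_r$-stable module furnished by Theorem~\ref{stablemodcat} applied to the almost $\tau$-tilting pair $(M,P)_r$, and $X_{(M,P)}$ was built column-by-column from exactly these bricks in subsection~\ref{ssc:bricks}. So no genuine obstacle remains; the corollary is essentially a dictionary entry identifying the geometric objects (walls) of \cite{BSTwc} with the combinatorial objects ($c$-vectors) of Definition~\ref{def:C-mat} via Theorem~\ref{th:c-matrix}.
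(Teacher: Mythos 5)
Your proposal is correct and follows essentially the same route as the paper: it invokes \cite[Corollary 3.18]{BSTwc} to identify the walls surrounding $\mathfrak{C}_{(M,P)}$ with the stability spaces of the $\theta_r$-stable bricks $B_r$, and then applies Theorem \ref{th:c-matrix} to replace each $[B_r]$ by the corresponding column of $C_{(M,P)}$ up to sign. The extra care you take about sign (a hyperplane is unchanged under negating its normal) and about matching the $r$-th brick to the $r$-th column is exactly the implicit content of the paper's two-line argument.
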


\begin{proof}
It was shown in \cite[Corollary 3.18]{BSTwc} that walls surrounding $\mathfrak{C}_{(M,P)}$ are defined by the dimension vectors of $\theta_r$-(semi)stable modules, where $1 \leq r \leq n$.
Then the result follows directly from theorem \ref{th:c-matrix}.
\end{proof}

It was already remarked in \cite[Section 4]{BSTwc} that the wall and chamber structure is dual to the exchange graph of $\tau$-tilting pairs and that this duality induces a brick labelling of the exchange graph.
Therefore, corollary \ref{cor:walls} implies that we can label the arrows in the exchange lattice of $\tau$-tilting pairs by $c$-vectors.
Note that this labelling is coincides with the labelling studied in \cite{Demonet2017}.

\begin{corollary}\label{cor:labeling}
Let $A$ be an algebra. Then every arrow in the exchange graph of $\tau$-tilting pairs can be labeled with a positive $c$-vector.
\end{corollary}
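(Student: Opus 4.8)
The plan is to derive Corollary \ref{cor:labeling} as a direct combination of Corollary \ref{cor:walls} with the duality between the wall and chamber structure of $A$ and the exchange graph of $\tau$-tilting pairs established in \cite[Section 4]{BSTwc}. First I would recall that this duality sends each $\tau$-tilting pair $(M,P)$ to its chamber $\mathfrak{C}_{(M,P)}$, and that two $\tau$-tilting pairs are joined by an arrow in the exchange graph exactly when their chambers share a common wall, namely the wall associated to the almost complete $\tau$-tilting pair obtained by deleting the mutated summand. Thus an arrow in the exchange graph corresponds precisely to one of the walls $\mathfrak{D}(B_r)$ surrounding the chamber $\mathfrak{C}_{(M,P)}$, where $B_r \in \mathcal{B}_{(M,P)}$ is the brick attached to the deleted summand in Subsection \ref{ssc:bricks}.

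Next I would invoke Corollary \ref{cor:walls}, which identifies these walls as the ones defined by the columns of $C_{(M,P)}$, i.e. the $c$-vectors corresponding to $(M,P)$. Equivalently, by Theorem \ref{th:c-matrix}, each such wall is $\mathfrak{D}(B_r)$ with $[B_r] = \pm c_r$. To obtain a \emph{positive} label one simply observes that for a given arrow, say the mutation of $(M,P)$ at the $r$-th summand producing $(M',P')$, one of the two $c$-vectors $c_r$ (computed in $(M,P)$) and $c'_r$ (computed in $(M',P')$) is positive and the other is its negative; this is exactly the content of the corollary immediately following Theorem \ref{th:c-matrix} asserting that every positive $c$-vector is the opposite of a negative one, combined with the sign-coherence already established. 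Assigning to the arrow the positive one of this pair gives a well-defined positive $c$-vector label, and by Theorem \ref{th:FacM} (or directly by the sign analysis in the proof of Theorem \ref{th:c-matrix}) this coincides with the dimension vector $[B_r]$ of the corresponding brick, so the labelling agrees with the brick labelling of \cite{Demonet2017, BCZ}.

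The only genuinely delicate point is making precise that the brick $B_r$ obtained from $(M,P)$ by deleting the $r$-th summand and the brick $B'_r$ obtained from $(M',P')$ by deleting the corresponding summand of the mutated pair are the \emph{same} brick, so that the choice of positive representative is independent of which endpoint of the arrow one starts from. This follows because both are $\theta$-stable for the common stability functional $\theta_{\alpha((M,P)_r)} = \theta_{\alpha((M',P')_r)}$ attached to the shared almost complete $\tau$-tilting pair, and Theorem \ref{stablemodcat} guarantees uniqueness up to isomorphism of such a stable module. Hence the label is unambiguous, and the corollary follows. I expect this identification of the two bricks across the arrow to be the one step requiring care; everything else is a formal assembly of the preceding results.

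\begin{proof}
By \cite[Section 4]{BSTwc}, the wall and chamber structure of $A$ is dual to the exchange graph of $\tau$-tilting pairs: each $\tau$-tilting pair $(M,P)$ corresponds to the chamber $\mathfrak{C}_{(M,P)}$, and an arrow between two $\tau$-tilting pairs corresponds to the common wall of the two chambers they induce. If this arrow is the mutation at the $r$-th summand, the common wall is the stability space $\mathfrak{D}(B_r)$, where $B_r\in\mathcal{B}_{(M,P)}$ is the brick associated in Subsection \ref{ssc:bricks} to the deleted summand; by Theorem \ref{stablemodcat} this brick is the unique stable module for the functional attached to the almost complete $\tau$-tilting pair $(M,P)_r$, hence it does not depend on which of the two endpoints of the arrow we use to describe it.

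By Corollary \ref{cor:walls}, the walls surrounding $\mathfrak{C}_{(M,P)}$ are defined by the columns of $C_{(M,P)}$, so $\mathfrak{D}(B_r)$ is defined by the $c$-vector $c_r$ with $[B_r]=\pm c_r$ by Theorem \ref{th:c-matrix}. Labelling the arrow by $[B_r]$ itself produces a positive $c$-vector, since $[B_r]$ is the dimension vector of the brick $B_r$ and every positive $c$-vector of $A$ is, up to sign, a column of $C_{(M,P)}$. By the uniqueness of $B_r$ noted above this label is well defined on the arrow, and it coincides with the brick labelling considered in \cite{Demonet2017, BCZ}. This finishes the proof.
\end{proof}
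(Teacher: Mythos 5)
Your proposal follows essentially the same route as the paper: identify the arrow with the shared wall of the two adjacent chambers via the duality from \cite[Section 4]{BSTwc}, use Corollary \ref{cor:walls} to see the wall is defined by a brick $B_r$ with $[B_r]=\pm c_r$, and then argue that the positive choice is a $c$-vector of one of the two endpoints. One sentence in your final write-up is weaker than it should be: \enquote{Labelling the arrow by $[B_r]$ itself produces a positive $c$-vector, since $[B_r]$ is the dimension vector of the brick $B_r$ and every positive $c$-vector of $A$ is, up to sign, a column of $C_{(M,P)}$} does not actually establish that $[B_r]$ (as opposed to $-[B_r]$) occurs as a column of $C_{(M,P)}$ or $C_{(M',P')}$; a priori both columns could be $-[B_r]$. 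The paper closes this by orienting the arrow so that $\emph{Fac}\, M'\subset \emph{Fac}\, M$ and invoking Proposition \ref{prop:SinFac} together with Theorem \ref{th:c-matrix}, which forces $c_r=[B_r]$ for the pair with the larger torsion class; equivalently, the $\lambda_r<0$ case in the proof of Theorem \ref{th:c-matrix} shows that whenever the column is negative for one endpoint it is positive for the mutated one. You do gesture at exactly this in your preliminary discussion (\enquote{or directly by the sign analysis in the proof of Theorem \ref{th:c-matrix}}), so the fix is only to promote that remark into the proof proper; your observation that the brick is independent of the endpoint, by uniqueness of the stable module for the common functional $\theta_r$, is a point the paper treats more tersely and is worth keeping.
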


\begin{proof}
Suppose that $(M,P)$ and $(M',P')$ are two $\tau$-tilting pairs such that one is a mutation of the other. 
Without loss of generality we can suppose that $\Fac M' \subset \Fac M$
Then \cite[Proposition 3.17]{BSTwc} implies that the chambers $\Ch_{(M,P)}$ and $\Ch_{(M',P')}$ inducing by them share a wall $\mathfrak{D}(N)$ for some module $N$.
But, corollary \ref{cor:walls} implies that $N$ can be taken to be a brick, which is unique up to isomorphism.
Therefore the dimension vector of $N$ is a positive $c$-vector for the $\tau$-tilting pair $(M,P)$ by proposition \ref{prop:SinFac} and theorem \ref{th:c-matrix}.
This finishes the proof.
\end{proof}

\section{A nice example}\label{sc:Example}

\begin{table}
 \begin{center}
  \scalebox{0.7}{
	\begin{tabular}{ | c | c | c | c | c | c |}
		\hline
		
		$(M,P)$	&	$G_{(M,P)}$	& 	$C_{(M,P)}=(G^{T}_{(M,P)})^{-1}$	& 	Positive $c$-vectors	&	$\B^+_{(M,P)}$ & $\Fac M = T(\B^+_{(M,P)})$	\\ 
		\hline
		$\left(\rep{1\\2} \oplus \rep{2\\3} \oplus \rep{3} , 0 \right)$	& 
		$ \begin{pmatrix}1&0&0 \\ 0&1&0 \\ 0&0&1\end{pmatrix} $	& 
		$\left( \begin{matrix}1&0&0 \\ 0&1&0 \\ 0&0&1\end{matrix} \right)$	& 
		$\left\{ \left( \begin{matrix} 1\\0\\0 \end{matrix} \right), \left( \begin{matrix} 0\\1\\0 \end{matrix}\right), \left( \begin{matrix} 0\\0\\1\end{matrix} \right) \right\}$	&
		$\{\rep{1}$, $\rep{2}$, $\rep{3}\}$ & $\mod A$\\
		\hline

		$\left( \rep{1\\2} \oplus \rep{2\\3} \oplus \rep{2} , 0 \right)$	& 
		$ \begin{pmatrix}1&0&0 \\ 0&1&1 \\ 0&0&-1\end{pmatrix} $	&
		$ \begin{pmatrix}1&0&0 \\ 0&1&0 \\ 0&1&-1\end{pmatrix} $	&
		$ \left\{ \begin{pmatrix} 1\\0\\0 \end{pmatrix}, \begin{pmatrix} 0\\1\\1 \end{pmatrix} \right\}$	&
		$\left\{ \rep{1}, \rep{2\\3} \right\}$ & $add\left\{ \rep{1\\2} \oplus \rep{2\\3} \oplus \rep{2} \oplus \rep{1} \right\}$	\\
		\hline
		
		$\left(\rep{1\\2} \oplus \rep{1} \oplus \rep{3} , 0 \right)$	&
		$\left( \begin{matrix}1&1&0 \\ 0&-1&0 \\ 0&0&1\end{matrix} \right)$	&
		$ \begin{pmatrix}1&0&0 \\ 1&-1&0 \\ 0&0&1\end{pmatrix} $	&
		$ \left\{ \begin{pmatrix} 1\\1\\0 \end{pmatrix}, \begin{pmatrix} 0\\0\\1 \end{pmatrix} \right\}$	&
		$\left\{ \rep{1\\2}, \rep{3} \right\}$	& $add\left\{ \rep{1\\2} \oplus \rep{1} \oplus \rep{3} \right\}$ \\
		\hline
		
		$\left( \rep{2\\3} \oplus \rep{3}, \rep{1\\2} \right)$			& 
		$\left( \begin{matrix}0&0&-1 \\ 1&0&0 \\ 0&1&0\end{matrix} \right)$	&
		$ \begin{pmatrix}0&0&-1 \\ 1&0&0 \\ 0&1&0\end{pmatrix} $	&
		$ \left\{ \begin{pmatrix} 0\\1\\0 \end{pmatrix}, \begin{pmatrix} 0\\0\\1 \end{pmatrix} \right\}$	&
		$\left\{ \rep{2}, \rep{3} \right\}$ & $add\left\{ \rep{2\\3} \oplus \rep{3} \oplus \rep{2}\right\}$	\\
		\hline
	
		$\left( \rep{1\\2} \oplus \rep{2}, \rep{3} \right)$			& 
		$\left( \begin{matrix}1&0&0 \\ 0&1&0 \\ 0&-1&-1\end{matrix} \right)$	&
		$ \begin{pmatrix}1&0&0 \\ 0&1&-1 \\ 0&0&-1\end{pmatrix} $	&
		$ \left\{ \begin{pmatrix} 1\\0\\0 \end{pmatrix}, \begin{pmatrix} 0\\1\\0 \end{pmatrix} \right\}$	&
		$\left\{ \rep{1}, \rep{2} \right\}$	& $add\left\{ \rep{1\\2} \oplus \rep{2} \oplus \rep{1} \right\}$\\
		\hline
		
		$\left( \rep{1} \oplus \rep{3}, \rep{2\\3} \right)$			& 
		$ \begin{pmatrix}1&0&0 \\ -1&0&-1 \\ 0&1&0\end{pmatrix} $	&
		$ \begin{pmatrix}1&0&-1 \\ 0&0&-1 \\ 0&1&0\end{pmatrix} $	&
		$ \left\{ \begin{pmatrix} 1\\0\\0 \end{pmatrix}, \begin{pmatrix} 0\\0\\1 \end{pmatrix} \right\}$	&
		$\left\{ \rep{1}, \rep{3} \right\}$	& $add\left\{ \rep{1} \oplus \rep{3}\right\}$ \\
		\hline
		
		$\left( \rep{3}, \rep{1\\2} \oplus \rep{2\\3} \right)$			& 
		$\left( \begin{matrix}0&-1&0 \\ 0&0&-1 \\ 1&0&0\end{matrix} \right)$	&
		$ \begin{pmatrix}0&-1&0 \\ 0&0&-1 \\ 1&0&0\end{pmatrix} $	&
		$ \left\{ \begin{pmatrix} 0\\0\\1 \end{pmatrix} \right\}$	&
		$\left\{ \rep{3} \right\}$	&  $add\left\{ \rep{3} \right\}$\\
		\hline
		
		$\left( \rep{1\\2} \oplus \rep{1}, \rep{3} \right)$			& 
		$\left( \begin{matrix}1&1&0 \\ 0&-1&0 \\ 0&0&-1\end{matrix} \right)$	&
		$ \begin{pmatrix}1&0&0 \\ 1&-1&0 \\ 0&0&-1\end{pmatrix} $	&
		$ \left\{ \begin{pmatrix} 1\\1\\0 \end{pmatrix} \right\}$	&
		$\left\{ \rep{1\\2} \right\}$ & $add\left\{ \rep{1\\2} \oplus \rep{1} \right\}$	\\
		\hline
		
		$\left( \rep{2\\3} \oplus \rep{2}, \rep{1\\2} \right)$			& 
		$\left( \begin{matrix}0&0&-1 \\ 1&1&0 \\ 0&-1&0\end{matrix} \right)$	&
		$ \begin{pmatrix} 0&0&-1 \\ 1&0&0 \\ 1&-1&0\end{pmatrix} $	&
		$ \left\{  \begin{pmatrix} 0\\1\\1 \end{pmatrix} \right\}$	&
		$\left\{ \rep{2\\3} \right\}$	& $add\left\{ \rep{2\\3} \oplus \rep{2} \right\}$\\
		\hline
		
		$\left( \rep{2}, \rep{1\\2} \oplus \rep{3} \right)$			&
		$\left( \begin{matrix}0&-1&0 \\ 1&0&0 \\ -1&0&-1\end{matrix} \right)$	&
		$ \begin{pmatrix}0&-1&0 \\ 1&0&-1 \\ 0&0&-1\end{pmatrix} $	&
		$ \left\{ \begin{pmatrix} 0\\1\\0 \end{pmatrix} \right\}$	&
		$\left\{ \rep{2} \right\}$ & $add\left\{ \rep{2} \right\}$	\\
		\hline
		
		$\left( \rep{1}, \rep{2\\3} \oplus \rep{3} \right)$			& 
		$\left( \begin{matrix}1&0&0 \\ -1&-1&0 \\ 0&0&-1\end{matrix} \right)$	&
		$ \begin{pmatrix}1&-1&0 \\ 0&-1&0 \\ 0&0&-1\end{pmatrix} $	&
		$ \left\{ \begin{pmatrix} 1\\0\\0 \end{pmatrix} \right\}$	&
		$\left\{ \rep{1} \right\}$	& $add\left\{ \rep{1}\right\}$ \\
		\hline
		
		$\left( 0 , \rep{1\\2} \oplus \rep{2\\3} \oplus \rep{3} \right)$	& 
		$\left( \begin{matrix}-1&0&0 \\ 0&-1&0 \\ 0&0&-1\end{matrix} \right)$	&
		$\left( \begin{matrix}-1&0&0 \\ 0&-1&0 \\ 0&0&-1\end{matrix} \right)$	&
		$\varnothing$	&
		$\varnothing$ & $add\left\{ 0 \right\}$	\\
		\hline

	\end{tabular}}
 \end{center}
 \caption{$\tau$-tilting pairs with their corresponding positive $c$-vectors, bricks and torsion classes}
 \label{haytabla}
\end{table}

We finish illustrating theorem \ref{th:c-matrix}, theorem \ref{th:FacM}, corollary \ref{cor:walls} and corollary \ref{cor:labeling} in the case of one particular algebra.

\begin{example}\label{A3tilted}
We consider the case of $A$, the path algebra of the quiver $$\xymatrix{1\ar^{\alpha}[r] & 2\ar^{\beta}[r] &3}$$ modulo the ideal generated by the relation $\alpha\beta$. 

This algebra has 12 different $\tau$-tilting pairs. 
We list them in Table \ref{haytabla} along with their corresponding $G$-matrices, $C$-matrices, positive $c$-vectors, $\mathfrak{B}_{(M,P)}^+$ and torsion classes, as proved in theorems \ref{th:c-matrix} and theorem \ref{th:FacM}.

Also, one can see in Figure \ref{labeled} the exchange graph of $\tau$-tilting pairs in which every arrow is labeled with a positive $c$-vector, as showed in corollary \ref{cor:labeling}. 

Finally, in figure \ref{w&c} one can see a representation of the wall and chamber structure of $A$ in the style of \cite[Example 4.0.2]{Igusa2009}. 
In this figure, each circle or arc correspond to the stereographic projection intersection of each wall with the unit sphere in $\mathbb{R}^{3}$. 
Note that in this projection the point at infinity correspond to the point $(1,1,1)$ in $\mathbb{R}^{3}$. Moreover, the arcs and circles are labeled by the $c$-vectors which are perpendicular to them and each chamber is tagged by the $\tau$-tilting pair that generates it.

Note that, given a $\tau$-tilting pair $(M,P)$, the positive columns of the $C$-matrix $C_{(M,P)}$ coincide with the non-convex arc surrounding $\mathfrak{C}_{(M,P)}$, while negative $c$-vectors correspond to the convex ones. 

Also, as we pointed out already, the exchange graph of $\tau$-tilting pairs can be embedded into the wall and chamber structure of the algebra, where a mutation corresponds to crossing a wall. 
To illustrate this phenomenon, we have coloured the arrows in figure \ref{labeled} with the colours of the walls that we are crossing at each mutation. 
Remark that arrows go from the chamber in which the given $c$-vector is positive to a chamber where it is negative. 

\begin{figure}
 \begin{center}
  \scalebox{1}{\begin{tikzcd}[ ampersand replacement=\&, row sep = huge]
															\& \left(\rep{1\\2} \oplus \rep{2\\3} \oplus \rep{3} , 0 \right)\ar[dr, "\tiny{\begin{pmatrix}1\\0\\0\end{pmatrix}}"  description, color=red]\ar[d, color=black!60!green, "\tiny{\begin{pmatrix}0\\1\\0\end{pmatrix}}" description]\ar[dl, "\tiny{\begin{pmatrix}0\\0\\1\end{pmatrix}}" ' description, color=blue]	\&										\\
				\left( \rep{1\\2} \oplus \rep{2\\3} \oplus \rep{2} , 0 \right)\ar[dr, "\tiny{\begin{pmatrix}1\\0\\0\end{pmatrix}}"description, color=red]\ar[d, "\tiny{\begin{pmatrix}0\\1\\1\end{pmatrix}}"' description, color=violet]	\& \left(\rep{1\\2} \oplus \rep{1} \oplus \rep{3} , 0 \right)\ar[ddl, gray, "\tiny{\begin{pmatrix}0\\0\\1\end{pmatrix}}"' description, color=blue]\ar[dd, gray, shift left=1, "\tiny{\begin{pmatrix}1\\1\\0\end{pmatrix}}" near start, color=brown]	\& \left( \rep{2\\3} \oplus \rep{3}, \rep{1\\2} \right)\ar[dl, "\tiny{\begin{pmatrix}0\\0\\1\end{pmatrix}}" ' description, color=blue]\ar[ddd, "\tiny{\begin{pmatrix}0\\1\\0\end{pmatrix}}" description, color=black!60!green]	\\
				\left( \rep{1\\2} \oplus \rep{2}, \rep{3} \right)\ar[d, "\tiny{\begin{pmatrix}0\\1\\0\end{pmatrix}}"' description, color=black!60!green]\ar[ddr, "\tiny{\begin{pmatrix}1\\0\\0\end{pmatrix}}" description, color=red]			\& \left( \rep{2\\3} \oplus \rep{2}, \rep{1\\2} \right)\ar[dd, shift left=1, bend left=50, "\tiny{\begin{pmatrix}0\\1\\1\end{pmatrix}}" description, crossing over, color=violet]		\&										\\
				\left( \rep{1\\2} \oplus \rep{1}, \rep{3} \right)\ar[d, "\tiny{\begin{pmatrix}1\\1\\0\end{pmatrix}}" ' description, color=brown]			\& \left( \rep{1} \oplus \rep{3}, \rep{2\\3} \right)\ar[dl, gray, "\tiny{\begin{pmatrix}0\\0\\1\end{pmatrix}}" ' description, color=blue] \ar[dr, "\tiny{\begin{pmatrix}1\\0\\0\end{pmatrix}}" description, color=red]			\&										\\
				\left( \rep{1}, \rep{2\\3} \oplus \rep{3} \right)\ar[dr, "\tiny{\begin{pmatrix}1\\0\\0\end{pmatrix}}" description, color=red]			\& \left( \rep{2}, \rep{1\\2} \oplus \rep{3} \right)\ar[d, "\tiny{\begin{pmatrix}0\\1\\0\end{pmatrix}}" description, color=black!60!green]			\& \left( \rep{3}, \rep{1\\2} \oplus \rep{2\\3} \right)\ar[dl, "\tiny{\begin{pmatrix}0\\0\\1\end{pmatrix}}"' description, color=blue]	\\
					\& \left( 0 , \rep{1\\2} \oplus \rep{2\\3} \oplus \rep{3} \right)	\&	\\																									
				\end{tikzcd}}
 \end{center}
 \caption{$\tau$-tilting exchange graph of $A$ with bricks labelling}
 \label{labeled}
\end{figure}
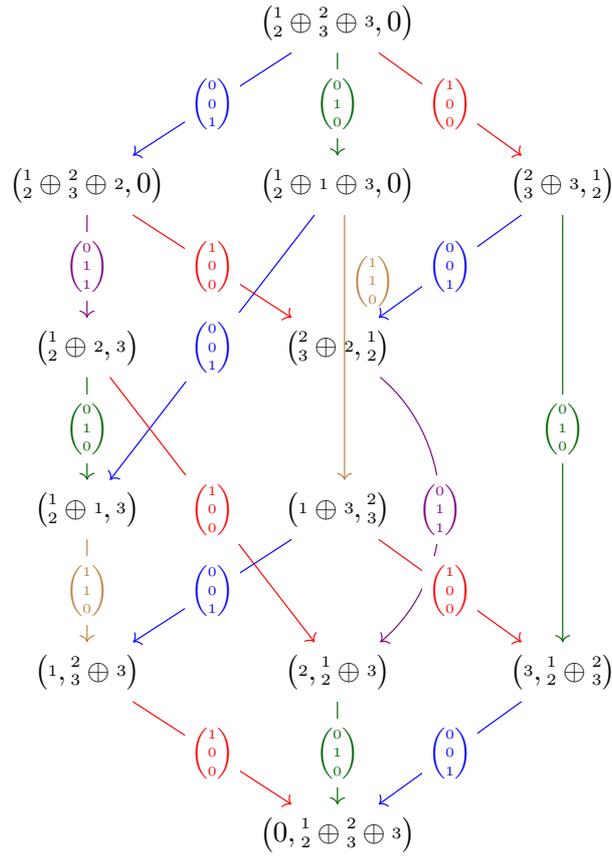

\begin{figure}
\begin{center}
\scalebox{0.9}{
\begin{tikzpicture}[line cap=round,line join=round,>=triangle 45,x=1.0cm,y=1.0cm]
\clip(-7.08,-7.22) rectangle (9.76,7.7);
\draw [line width=0.6pt, color=red] (0.,3.) circle (4.cm);
\draw [line width=0.6pt, color=black!60!green] (-2.598076211353316,-1.5) circle (4.cm);
\draw [line width=0.6pt, color=blue] (2.598076211353316,-1.5) circle (4.cm);
\draw [shift={(0.5836956521739132,-1.5)}, line width=0.6pt, color=violet] plot[domain=-1.76040874305534:1.760408743055339,variable=\t]({1.*3.0968856314637665*cos(\t r)+0.*3.0968856314637665*sin(\t r)},{0.*3.0968856314637665*cos(\t r)+1.*3.0968856314637665*sin(\t r)});
\draw [shift={(-1.0584754935143137,1.1666666666666674)},line width=0.6pt, color=brown]  plot[domain=2.7748867490119555:5.602693660560826,variable=\t]({1.*3.0792014356780046*cos(\t r)+0.*3.0792014356780046*sin(\t r)},{0.*3.0792014356780046*cos(\t r)+1.*3.0792014356780046*sin(\t r)});
\begin{scriptsize}
\draw[color=red] (-4,5) node {$\begin{pmatrix}1\\0\\0\end{pmatrix}^{\perp}$};
\draw[color=black!60!green] (-5.9,-4.5) node {$\begin{pmatrix}0\\1\\0\end{pmatrix}^{\perp}$};
\draw[color=blue] (5.9,-4.5) node {$\begin{pmatrix}0\\0\\1\end{pmatrix}^{\perp}$};
\draw[color=violet] (3.5,-4) node {$\begin{pmatrix}0\\1\\1\end{pmatrix}^{\perp}$};
\draw[color=brown] (-4,-1) node {$\begin{pmatrix}1\\1\\0\end{pmatrix}^{\perp}$};
\draw (0,0) node {$\mathfrak{C}_{\left(0, \rep{1\\2}\oplus\rep{2\\3}\oplus\rep{3}\right)}$};
\draw (6,4) node {$\mathfrak{C}_{\left(\rep{1\\2}\oplus\rep{2\\3}\oplus\rep{3} , 0\right)}$};
\draw (0,4) node {$\mathfrak{C}_{\left(\rep{2\\3}\oplus\rep{3}, \rep{1\\2}\right)}$};
\draw (-1.5,1) node {$\mathfrak{C}_{\left(\rep{3}, \rep{1\\2}\oplus\rep{2\\3}\right)}$};
\draw (2.7,1.7) node {$\mathfrak{C}_{\left(\rep{2\\3}\oplus\rep{2}, \rep{1\\2}\right)}$};
\draw (2,0.3) node {$\mathfrak{C}_{\left(\rep{2}, \rep{1\\2}\oplus\rep{3}\right)}$};
\draw (-0.5,-1.4) node {$\mathfrak{C}_{\left(\rep{1}, \rep{2\\3}\oplus\rep{3}\right)}$};
\draw (-2.5,-0.9) node {$\mathfrak{C}_{\left(\rep{1}\oplus\rep{3},\rep{2\\3}\right)}$};
\draw (5,-1) node {$\mathfrak{C}_{\left( \rep{1\\2}\oplus\rep{2\\3}\oplus\rep{2},0\right)}$};
\draw (-4,-3) node {$\mathfrak{C}_{\left( \rep{1\\2}\oplus\rep{3}\oplus\rep{1},0\right)}$};
\draw (0,-3) node {$\mathfrak{C}_{\left( \rep{1\\2}\oplus\rep{1},\rep{3}\right)}$};
\draw (2.5,-1.5) node {$\mathfrak{C}_{\left( \rep{1\\2}\oplus\rep{2},\rep{3}\right)}$};
\end{scriptsize}
\end{tikzpicture}}

\caption{The wall and chamber structure of $A$ labeled by $c$-vectors}
\label{w&c}
\end{center}
\end{figure}
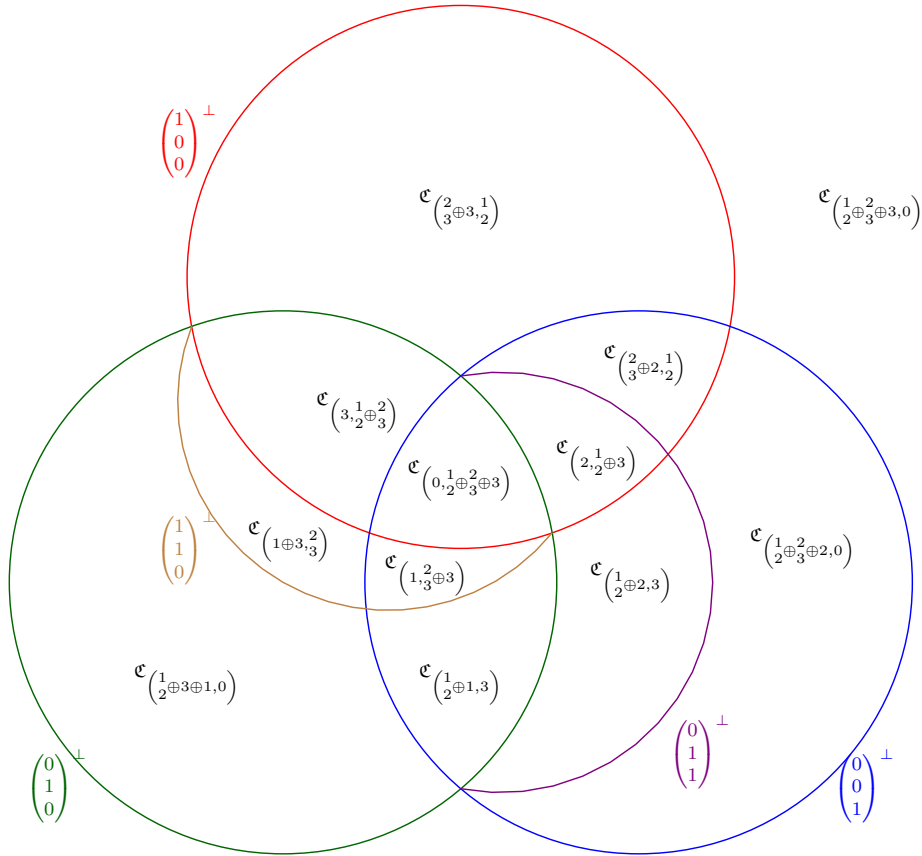
\end{example}

\newpage
\bibliography{Mendeley.bib}{}
\bibliographystyle{abbrv}

\end{document}